\documentclass{amsart}

\usepackage[foot]{amsaddr}
\setcounter{tocdepth}{3}

\let\oldtocsection=\tocsection

\let\oldtocsubsection=\tocsubsection 

\let\oldtocsubsubsection=\tocsubsubsection

\renewcommand{\tocsection}[2]{\vspace{0.5em}\hspace{0em}\oldtocsection{#1}{#2}}
\renewcommand{\tocsubsection}[2]{\vspace{0.5em}\hspace{1em}\oldtocsubsection{#1}{#2}}
\renewcommand{\tocsubsubsection}[2]{\vspace{0.5em}\hspace{2em}\oldtocsubsubsection{#1}{#2}}
\usepackage{graphicx,cancel,xcolor,hyperref,comment,graphicx,geometry}

\usepackage{tikz}
\usepackage{graphicx,url,etoolbox}
\usepackage{lipsum}
\usepackage{dsfont}
\usepackage{amsmath}
\usepackage{amssymb}

\setlength{\hoffset}{-18pt}
\setlength{\oddsidemargin}{15pt}  
\setlength{\evensidemargin}{15pt}  
\setlength{\marginparwidth}{54pt}  
\setlength{\textwidth}{481pt}  
\setlength{\voffset}{-18pt}  
\setlength{\marginparsep}{7pt}  
\setlength{\topmargin}{0pt}  
\setlength{\headheight}{10pt}  
\setlength{\headsep}{10pt}  
\setlength{\footskip}{27pt}  
\setlength{\textheight}{650pt}  

\usepackage{fancyhdr}
\pagestyle{fancy}

\fancyhead[C]{} 
\fancyhead[L]{{\leftmark}}
\fancyhead[R]{}

\usepackage{lastpage}

\newtheorem{theoreme}{Theorem}[section]

\theoremstyle{definition}

\numberwithin{equation}{section}

\renewenvironment{proof}{{\bfseries \noindent Proof.}}{\demo}
\newcommand\xqed[1]{%
	\leavevmode\unskip\penalty9999 \hbox{}\nobreak\hfill
	\quad\hbox{#1}}
\newcommand\demo{\xqed{$\square$}}

\hypersetup{bookmarks, colorlinks, urlcolor=blue, citecolor=blue, linkcolor=blue, hyperfigures, pagebackref,
	pdfcreator=LaTeX, breaklinks=true, pdfpagelayout=SinglePage, bookmarksopen=true,bookmarksopenlevel=2}

\usepackage{comment}

\def\R{\mathbb R}

\def\HH{\mathcal H}

\def\AA{\mathcal A}


\def\la {{\lambda}}

\newcommand {\nc}   {\newcommand}
\nc {\be}   {\begin{equation}} \nc {\ee}   {\end{equation}} \nc
{\beq}  {\begin{eqnarray}} \nc {\eeq}  {\end{eqnarray}} \nc {\beqs}
{\begin{eqnarray*}} \nc {\eeqs} {\end{eqnarray*}}
\def\edc{\end{document}}

\providecommand{\abs}[1]{\lvert#1\rvert}

\numberwithin{equation}{section}

\theoremstyle{Thm}
\newtheorem{Thm}{Theorem}[section]
\newtheorem{lem}{Lemma}[section]
\newtheorem{prop}{Proposition}[section]

\newtheorem{rk}{Remark}[section]
\definecolor{carnelian}{rgb}{0.7, 0.11, 0.11}
\definecolor{carmine}{rgb}{0.59, 0.0, 0.09}
\definecolor{burgundy}{rgb}{0.5, 0.0, 0.13}
\definecolor{darkmidnightblue}{rgb}{0.0, 0.2, 0.4}
\definecolor{dimgray}{rgb}{0.75, 0.75, 0.75}
\definecolor{palecarmine}{rgb}{0.69, 0.25, 0.21}
\newcounter{dummy} 
\numberwithin{dummy}{section}
\newtheorem{Theorem}[dummy]{Theorem}

\newtheorem{defi}[dummy]{Definition}

\numberwithin{equation}{section}

\def\AA{\mathcal A}
\def\HH{\mathbf{\mathcal H}}

\newcommand{\intdx}{\int_{0}^{L} }

\newcommand{\intdnb}{\int_{0 }^{\beta } }

\newcommand{\intdnc}{\int_{\alpha }^{\gamma} }
\newcommand{\Sbt}{S_{\tilde{b}(\cdot)}(u,\omega)}
\newcommand{\Sbtz}{S_{\widetilde{b_0}}(u,\omega)}

\newcommand{\intdm}{\int_{0 }^{\infty } }

\providecommand{\abs}[1]{\lvert#1\rvert}

\begin{document}

		\title[\fontsize{7}{9}\selectfont  ]{Stability results of coupled  wave models with locally memory in a past history framework  via non smooth coefficients on the interface}
	\author{Mohammad Akil$^{1}$}
	\author{Haidar Badawi$^{2}$}
	\author{Serge Nicaise$^{2}$}
	\author{Ali Wehbe$^{3}$}
\address{$^1$ Universit\'e Savoie Mont Blanc, Laboratoire LAMA, Chamb\'ery-France}
\address{$^2$ Universit\'e Polytechnique Hauts-de-France (UPHF-LAMAV),
	Valenciennes, France}
\address{$^3$Lebanese University, Faculty of sciences 1, Khawarizmi Laboratory of  Mathematics and Applications-KALMA, Hadath-Beirut, Lebanon.}
	\email{mohammad.akil@univ-smb.fr, Haidar.Badawi@etu.uphf.fr, Serge.Nicaise@uphf.fr,   ali.wehbe@ul.edu.lb}
	\keywords{Coupled wave equation; past history damping;  Strong stability; Exponential stability; Polynomial stability; Frequency domain approach}
	\begin{abstract}
		In this paper, we investigate the stabilization of a locally coupled wave equations with local viscoelastic damping of past history type acting only in one equation via non smooth coefficients. First, using a general criteria of Arendt-Batty,  we prove the strong stability of our system. Second, using a frequency domain approach combined with the multiplier method, we establish the exponential stability of the solution  if and only if the two waves have the same speed of propagation. In case of different speed propagation, we prove that the energy of our system  decays polynomially with rate $t^{-1}$. Finally, we show the lack of exponential stability if the speeds of wave propagation are different.
	\end{abstract}
	\maketitle
	\pagenumbering{roman}
	\maketitle
	\tableofcontents
	\pagenumbering{arabic}
	\setcounter{page}{1}
	\newpage
	

\section{Introduction}
\noindent In this paper, we investigate the indirect stability of coupled elastic wave equations with localized past history damping. More precisely, we consider the following system: 
\begin{equation}\label{paper2-sysorig} \left\{	\begin{array}{llll}\vspace{0.15cm}
\displaystyle u_{tt}-\left(au_x -b(x)\int_{0}^{\infty}g(s)u_x (x,t-s)ds\right)_x +c(x)y_t =0,& (x,s,t)\in (0,L)\times (0,\infty)\times (0,\infty) ,&\\ \vspace{0.15cm}
y_{tt}-y_{xx}-c(x)u_t =0,  &(x,t)\in (0,L)\times (0,\infty) ,&\\\vspace{0.15cm}
u(0,t)=u(L,t)=y(0,t)=y(L,t)=0,& t>0 ,& \\\vspace{0.15cm}
(u(x,-s),u_t (x,0))=(u_0 (x,s),u_1 (x)), &(x,s)\in (0,L)\times(0,\infty),&\\\vspace{0.15cm}	(y(x,0),y_t (x,0))=(y_0 (x),y_1 (x)), &x\in (0,L),
\end{array}\right.
\end{equation}
where $L$ and $a$ are positive real numbers. We suppose that there exists $0<\alpha <\beta <\gamma <L$ and  positive constants $b_0 $ and $c_0$, such that   
\begin{equation}\tag{$b(\cdot)$}\label{p2-b}
b(x)=\left\{	\begin{array}{lll}\vspace{0.15cm}
b_0 ,& x\in (0 ,\beta), &\\
0,&x\in (\beta ,L),&
\end{array}\right. \end{equation}
\begin{equation}\tag{$c(\cdot)$}\label{p2-c}
c(x)=\left\{	\begin{array}{lll}\vspace{0.15cm}
c_0 ,& x\in (\alpha ,\gamma), &\\
0,&x\in (0,\alpha )\cup (\gamma ,L),&
\end{array}\right.
\end{equation}

\begin{figure}[h]
	\begin{center}
		\begin{tikzpicture}
		\draw[->](0,0)--(5,0);
		\draw[->](0,0)--(0,3);
		
		 	\draw[dashed](1,0)--(1,2);
		\draw[dashed](3,0)--(3,2);
		\draw[dashed](2,0)--(2,1);
		
		\node[black,below] at (1,0){\scalebox{0.75}{$\alpha$}};
		\node at (1,0) [circle, scale=0.3, draw=black!80,fill=black!80] {};
		
		\node[black,below] at (2,0){\scalebox{0.75}{$\beta$}};
		\node at (2,0) [circle, scale=0.3, draw=black!80,fill=black!80] {};
		
		\node[black,below] at (3,0){\scalebox{0.75}{$\gamma $}};
		\node at (3,0) [circle, scale=0.3, draw=black!80,fill=black!80] {};

		\node[black,below] at (4,0){\scalebox{0.75}{$L $}};
		\node at (4,0) [circle, scale=0.3, draw=black!80,fill=black!80] {};

		
		\node[black,below] at (0,0){\scalebox{0.75}{$0$}};
		\node at (0,0) [circle, scale=0.3, draw=black!80,fill=black!80] {};
		
			\node at (0,1) [circle, scale=0.3, draw=black!80,fill=black!80] {};
		\node[black,left] at (0,1){\scalebox{0.75}{$b_0$}};
		
		\node at (0,2) [circle, scale=0.3, draw=black!80,fill=black!80] {};
		\node[black,left] at (0,2){\scalebox{0.75}{$c_0$}};

		\node[black,right] at (6.5,3){\scalebox{0.75}{$b(x)$}};
		\node[black,right] at (6.5,2.5){\scalebox{0.75}{$c(x)$}};
	
	\draw[-,red](6,3)--(6.5,3);
	\draw[-,blue](6,2.5)--(6.5,2.5);
		\draw[-,red](0,1)--(2,1);
	    \draw[-,red](2,0)--(4,0);
	
	    \draw[-,blue](0,0)--(1,0);
	    \draw[-,blue](1,2)--(3,2);
	    \draw[-,blue](3,0.011)--(4,0.011);
	
		\end{tikzpicture}\end{center}
	\caption{Geometric description of the functions $b(x)$ and $c(x)$.}\label{p2-Fig0}
\end{figure}
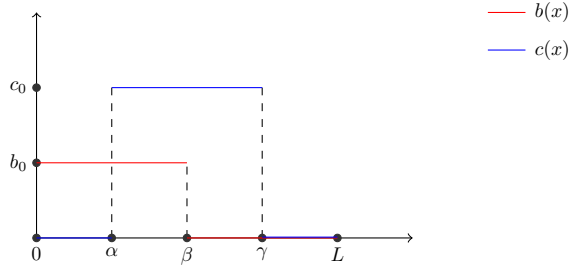
 \noindent the general integral term represents a history term with the relaxation function $g$ that is supposed to satisfy the following hypotheses:
\begin{equation}\tag{${\rm H }$}\label{paper2-H}
\left\{ \begin{array}{lll}
g \in L^1 ([0,\infty))\cap C^1 ([0,\infty)) \ \text{is a  positive function such that }\vspace{0.15cm} \\\displaystyle 
  g(0):=g_0 >0, \ \
 \int_{0}^{\infty}g(s)ds:=\widetilde{g},\ \  
 \widetilde{b}(x):=a-b(x)\widetilde{g}>0, \ \  \text{and}\vspace{0.15cm}\\ 
  g^{\prime}(s)\leq -mg(s), \ \ \text{for some $m>0$}, \forall s \geq 0.
\end{array}\right.
\end{equation}
Remark that, the last assumption in \eqref{paper2-H} implies that 
\begin{equation}\label{2ast}
	g(s)\leq g_{0} e^{-ms}, \ \forall s\geq 0.
\end{equation}
Moreover, from the definition of \ref{p2-b}, we have 
\begin{equation}\tag{$\widetilde{b}(\cdot)$}\label{p2-btilde}
	\widetilde{b}(x):=a-b(x)\widetilde{g}=\left\{\begin{array}{lll}
	\widetilde{b_0}:=a-b_0 \widetilde{g}, & x\in (0,\beta),&\vspace{0.15cm}\\ a,& x\in  (\beta,L).&
	\end{array}\right.
\end{equation}
$\newline$
 The notion of indirect damping mechanisms has been introduced by Russell in \cite{Russell01} and since this time, it retains the attention of many authors. In particular, the fact that only one equation of the coupled system is damped refers to the so-called class of "indirect" stabilization problems initiated and studied in \cite{Alabau04,Alabau02,Alabau05} and further studied by many authors, see for instance  \cite{Alabau06,LiuRao01,ZhangZuazua01} and the rich references therein. In 2008, Rivera {\it et al.} in \cite{MUNOZRIVERA2008482} studied the stability of $1$-dimensional Timoshenko system with past history acting only in one equation, they showed that the system is exponential stable if and only if the equations have the same wave speeds of propagation. In case that the wave speeds of the equations are different, they proved that the solution of the system decays polynomially to zero.  In 2012, Matos {\it et al.}  in \cite{Dilberto} studied the stability of the abstract coupled wave equations with past history, by considering:
\begin{equation}\label{p2-1.2}
\left\{	\begin{array}{lll}
\displaystyle	u_{tt}+\mathbb{A}_1u-\intdm g(s)\mathbb{A}_2 u (t-s)ds+\beta v =0,\vspace{0.15cm}\\
	v_{tt}+\mathbb{B}v+\beta u=0, \ \ \text{in}\ \ L^2 (\R^{+},\HH),\vspace{0.15cm}\\
	u(-t)=u_0(t), \ \ t\geq 0, \vspace{0.15cm}\\
	v(0)=v_0, \vspace{0.15cm}\\
	u_t(0)=u_1 , \ \ v_t (0)=v_1, 
	\end{array}
	\right.
\end{equation}
where $\mathbb{A}_1$, $\mathbb{A}_2$ and $\mathbb{B}$  are self-adjoint positive-definite operators with the domain $D(\mathbb{A}_1)\subseteq D(\mathbb{A}_2)\subset \HH $ and $D(\mathbb{B})\subset \HH$ with compact embeddings in $\HH$, $g:[0,\infty)\longmapsto [0,\infty)$ is a smooth and summable function and $\beta $ is a small positive constant. They showed that the abstract setting is not strong enough to produce exponential stability and they proved that the solution decays polynomially to zero.
In 2014, Fatori {\it et al.} in \cite{Fatoriluci} studied a fully hyperbolic thermoelastic Timoshenko system with past history where the thermal effects are given by Cattaneo's law, they established the exponential stability of the solution if and only if  the coefficients of their System satisfy the next relation
$\chi_0 :=\left(\tau-\frac{\rho_1}{\rho_3 \kappa}\right)\left(\rho_2 -\frac{b\rho_1}{\kappa}\right)-\frac{\tau\rho_1 \delta^2}{\rho_3 \kappa} =0$. In the case  $\chi_0 \neq 0$, they established optimal polynomial stability rates. In the same year, Santos {\it et al.} in \cite{Santossoufyanejunior}  studied the stability of $1$-dimensional Bresse system with past history acting in the shear angle displacement, they showed the exponential decay of the solution if and only if the wave speeds are the same. Otherwise, they showed that the Bresse system is polynomial stable with optimal decay rate.
In 2014, Alabau-Boussouira {\it et al.} in \cite{FNP} studied the behaviour of the wave equation with viscoelastic damping in the presence of time-delayed damping, by considering:
\begin{equation}\label{p2-1.3}
\left\{	\begin{array}{lll}
\displaystyle 	u_{tt}(x,t)-\Delta u(x,t)+\intdm \mu (s)\Delta u(x,t-s)ds+ku_t (x,t-\tau)=0, \ \ \text{in}\ \ \Omega \times (0,\infty),\vspace{0.15cm}\\
	u(x,t)=0, \ \ \text{in}\ \ \partial \Omega \times (0,\infty),\vspace{0.15cm}\\
	u(x,t)=u_0 (x,t) \ \ \text{in}\  \ \Omega \times (-\infty,0],
	\end{array}
	\right.
\end{equation}
where $\Omega \subset \R^n $ is an open bounded set with a smooth boundary $\partial \Omega$, the initial data $u_0$ belongs to a suitable space, the constant $\tau >0$ is the time delay, $k$ is a real number and the memory kernel $\mu:[0,\infty)\longmapsto [0,\infty)$ is a locally absolutely continuous function satisfying $\mu (0)=\mu_0 >0$, $\intdm \mu(t)dt=\tilde{\mu}<1$ and $\mu^{\prime}(t)\leq -\alpha \mu (t)$, for some $\alpha >0$. They proved an exponential stability result provided that the amplitude $k$ of time-delayed damping is small enough. Also, they showed that even if the delay effect usually generates instabilities, the damping due to viscoelastic can counterbalance them. Moreover, in \cite{Giorgi} they established an exponential stability of the system \eqref{p2-1.3} with $k=0$.
In 2018, Abdallah, Ghader and Wehbe in \cite{Wehbe08} studied the stability of a $1$-dimensional Bresse system with infinite memory type control and /or with heat conduction given by Cattaneo's law acting in the shear angle displacement. In the absence of thermal effect, under the same speed propagation, they established  the exponential stability of the system. However, in the case of different speed propagation, they established a polynomial energy decay rate. In 2018, Cavalcanti {\it et al.} in \cite{CAVALCANTI20186535} studied the asymptotic stability of the  multidimensional damped wave equation, by considering:
\begin{equation}
	\rho(x)u_{tt}-\Delta u +\intdm g(s)\text{div}[a(x)\nabla u (\cdot,t-s)]ds+b(x)u_t =0, \ \ \text{in}\ \ \Omega\times (0,\infty),
\end{equation}
where $\Omega$ is an open bounded and connected set of $\R^n$, $n\geq 2$, $\rho (x)$ is constant,  $a(x)\geq 0$ is a smooth function, $b(x)\geq 0$ is a bounded function acting effectively  in a region $A$ of $\Omega$ where $a=0$. Considering that the well-known geometric control condition $(\omega,T_0)$ holds and supposing that the relaxation function $g $ is bounded by a function that decays exponentially to zero, they proved that the solution to the corresponding partial viscoelastic model decays exponentially to zero, even in the absence of the frictional dissipative effect. Moreover, they proved by removing the frictional damping term $b(x)u_t$ and by assuming that $\rho$ is not constant, that localized viscoelastic damping is strong enough to assure that the system is exponentially stable. In 2011, Almeida {\it and al.} in \cite{Almeida2011} studied the stability of coupled wave equations with past history effective only in one equation, by considering the following system:
\begin{equation}\label{almeida}
\left\{	\begin{array}{lll}
\displaystyle	u_{tt}-\Delta u +\intdm g(s)\Delta u(\cdot,t-s)ds+\alpha v =0,\ \ \text{in}\ \ \Omega \times (0,\infty),\vspace{0.15cm}\\
	v_{tt}-\Delta v +\alpha u =0,\ \ \text{in}\ \ \Omega \times (0,\infty),\vspace{0.15cm}\\
	u=v=0, \ \ \text{on}\ \ \Gamma \times(0,\infty)\vspace{0.15cm}\\
	u(x,0),v(x,0))=(u_0 (x),v_0(x)) \ \ \text{in}\  \ \Omega,\vspace{0.15cm}\\
		u_t(x,0),v_t(x,0))=(u_1 (x),v_1(x)) \ \ \text{in}\  \ \Omega,
	\end{array}
	\right.
\end{equation}
where $\Omega$ is an open bounded set of $\R^n$ with smooth boundary $\Gamma$ and $\alpha >0$. They showed that the dissipation given by the memory effect is not strong enough to produce exponential decay. They proved that the solution of the system \eqref{almeida} decays polynomially with  rate  $t^{-\frac{1}{2}}$. Also, in 2020, Cordeiro {\it et al.} in \cite{cordeiro} etablished the optimality of the decay rate.
\\[0.1in]
But to the best of our knowledge, it seems that  no result in the literature exists concerning the case of coupled wave equations with localized past history damping, especially in the absence of smoothness of the damping and coupling  coefficients. The goal of the present paper is to fill this gap by studying the stability of system \eqref{paper2-sysorig}.
\\[0.1in]
This paper is organized as follows: In Section \ref{paper2-WPS}, we prove the well-posedness of our system by using semigroup approach. In Section \ref{p2-sec3}, following a general criteria of Arendt Batty, we show the strong stability of our system in the absence of the compactness of the resolvent.  Next, in Section \ref{Exp-Pol}, by using the frequency domain approach  combining with a specific  multiplier method, we establish exponential stability of the solution  if and only if the waves have same speed propagation (i.e. $a=1$). In the case $a\neq 1$, we prove that the energy of our system  decays polynomially with the rate $t^{-1}$. Finally, in Section \ref{p2-lackexpostability}, we show the lack of exponential stability in case that the speeds of wave propagation are different, i.e., when  $a\neq 1$.
\section{Well-posedness of the system}\label{paper2-WPS}
\noindent In this section, we will establish the well-posedness of  system \eqref{paper2-sysorig} by using semigroup approach. To this aim, as in \cite{Dafermos01},
we introduce the following auxiliary change of variable

\begin{equation}\label{p2-changeofvar}
\omega(x,s,t):=u(x,t)-u(x,t-s),\ \ (x,s,t)\in (0,L)\times (0,\infty)\times (0,\infty).
\end{equation}
Then, system \eqref{paper2-sysorig} becomes
\begin{eqnarray}	u_{tt}-\left(\widetilde{b}(\cdot)u_x +b(\cdot)\int_{0}^{\infty}g(s)\omega_x (\cdot,s,t) \right)_x  +c(\cdot)y_t =0,& (x,s,t)\in (0,L)\times (0,\infty)\times (0,\infty), &\label{p2-sysorg0}\\ \vspace{0.15cm}
y_{tt}-y_{xx}-c(\cdot)u_t =0,  &(x,t)\in (0,L)\times (0,\infty) ,&\label{p2-sysorg1}\\\vspace{0.15cm}
\omega_{t}(\cdot,s,t)+\omega_{s}(\cdot,s,t)-u_t =0,  &(x,s,t)\in (0,L)\times (0,\infty)\times (0,\infty), &\label{p2-sysorg2}\vspace{0.15cm}
\end{eqnarray}
with the following boundary  conditions 
\begin{equation}\label{p2-bc3}
\left\{\begin{array}{lll}
	u(0,t)=u(L,t)=y(0,t)=y(L,t)=0,\   \ t>0, \vspace{0.15cm}\\ \omega(\cdot,0,t)=0,\ \ (x,t)\in (0,L)\times(0,\infty), \vspace{0.15cm}\\ \omega(0,s,t)=0 , \ \ (s,t)\in (0,\infty)\times(0,\infty),
	\end{array}\right.
	 \end{equation}
and the following initial conditions
\begin{equation}\label{p2-initialcon}
\left\{\begin{array}{llll}
u(\cdot,-s)=u_0 (\cdot,s),\qquad u_t (\cdot,0)=u_1 (\cdot), &(x,s)\in (0,L)\times(0,\infty),& \vspace{0.15cm}\\
y(\cdot,0)=y_0 (\cdot),\qquad y_t (\cdot,0)=y_1 (\cdot), &x\in (0,L),&\vspace{0.15cm} \\
\omega(\cdot,s,0)=u_0 (\cdot,0)-u_0 (\cdot,s), &(x,s)\in (0,L)\times(0,\infty).&\vspace{0.15cm} 
\end{array}\right.\end{equation} 
The energy of  system \eqref{p2-sysorg0}-\eqref{p2-initialcon} is given by 
\begin{equation}\label{p2-Energy}
E(t)=E_1 (t)+E_2 (t)+E_3 (t),
\end{equation}where
\begin{equation*}
E_1 (t)=\frac{1}{2}\int_{0}^{L}\left(\left|u_t \right|^2 +\widetilde{b}(\cdot)|u_x |^2 \right)dx,\ \
E_2 (t)=\frac{1}{2}\int_{0}^{L}\left(\left|y_t \right|^2 +|y_x |^2 \right)dx\ \ \text{and}\ \ 
E_3 (t)=\frac{b_0 }{2}\intdnb \int_{0}^{\infty}g(s)|\omega_x (\cdot,s,t) |^2 dsdx.    \end{equation*} 
\begin{lem}\label{p2-lemenergy}
	{\rm	 Under the hypotheses \eqref{paper2-H}. Let  $U=(u,u_t ,y,y_t ,\omega )$ be a regular solution of  system \eqref{p2-sysorg0}-\eqref{p2-initialcon}. Then, the energy $E(t)$  satisfies the following estimation }
	\begin{equation}\label{p2-dewrtt}
		\frac{d}{dt}E(t)= \frac{b_0}{2}\intdnb \int_{0}^{\infty}g^{\prime}(s)|\omega_x (\cdot,s,t)|^2 dsdx .
		\end{equation}
\end{lem}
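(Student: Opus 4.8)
The plan is to differentiate each of the three pieces $E_1,E_2,E_3$ of the energy along a regular solution and add the results, exploiting the coupling structure so that all mixed terms cancel in pairs and only the memory term survives.

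First I would compute $\frac{d}{dt}E_1(t)=\mathrm{Re}\int_0^L\big(u_{tt}\bar u_t+\widetilde b(\cdot)u_{xt}\bar u_x\big)\,dx$. Substituting $u_{tt}$ from \eqref{p2-sysorg0}, integrating by parts in $x$, and using that $u(0,t)=u(L,t)=0$ forces $u_t(0,t)=u_t(L,t)=0$, the term $\widetilde b(\cdot)u_{xt}\bar u_x$ is absorbed and one is left with $\frac{d}{dt}E_1(t)=-\mathrm{Re}\int_0^L b(\cdot)\Big(\int_0^\infty g(s)\omega_x(\cdot,s,t)\,ds\Big)\bar u_{xt}\,dx-\mathrm{Re}\int_0^L c(\cdot)y_t\bar u_t\,dx$. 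Likewise, multiplying \eqref{p2-sysorg1} by $\bar y_t$, integrating over $(0,L)$ and using $y_t(0,t)=y_t(L,t)=0$, one gets $\frac{d}{dt}E_2(t)=\mathrm{Re}\int_0^L c(\cdot)u_t\bar y_t\,dx$. Since $c(\cdot)$ is real valued, $\mathrm{Re}\int_0^L c(\cdot)u_t\bar y_t\,dx=\mathrm{Re}\int_0^L c(\cdot)y_t\bar u_t\,dx$, so the two coupling terms produced by $c(\cdot)$ cancel when $E_1$ and $E_2$ are summed.

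Next, for $E_3$ I would write $\frac{d}{dt}E_3(t)=b_0\,\mathrm{Re}\intdnb\int_0^\infty g(s)\,\omega_{xt}\bar\omega_x\,ds\,dx$ and replace $\omega_{xt}=u_{xt}-\omega_{xs}$, obtained by differentiating \eqref{p2-sysorg2} in $x$. The part containing $u_{xt}$ gives $b_0\,\mathrm{Re}\intdnb\int_0^\infty g(s)u_{xt}\bar\omega_x\,ds\,dx$; because $b(\cdot)\equiv b_0$ on $(0,\beta)$ and $b(\cdot)\equiv 0$ on $(\beta,L)$, and $g$ is real, this is exactly the negative of the memory coupling term in $\frac{d}{dt}E_1(t)$, so these two cancel as well. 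For the remaining piece $-b_0\,\mathrm{Re}\intdnb\int_0^\infty g(s)\omega_{xs}\bar\omega_x\,ds\,dx$ I would integrate by parts in $s$, using $\mathrm{Re}\big(g(s)\omega_{xs}\bar\omega_x\big)=\tfrac12 g(s)\partial_s|\omega_x|^2$: the boundary term at $s=0$ vanishes since $\omega(\cdot,0,t)=0$ gives $\omega_x(\cdot,0,t)=0$, and the one at $s=\infty$ vanishes for a regular solution, consistently with \eqref{2ast} and the decay of $\omega_x$ built into the energy space. What is left is precisely $+\frac{b_0}{2}\intdnb\int_0^\infty g'(s)|\omega_x(\cdot,s,t)|^2\,ds\,dx$.

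Adding the three identities, every mixed term cancels and only this last contribution survives, which is exactly \eqref{p2-dewrtt}. The only delicate point is the justification that the boundary term at $s=\infty$ in the $s$-integration by parts vanishes; this is where the regularity of $U$ and hypothesis \eqref{paper2-H} (through \eqref{2ast}) are genuinely used, the rest being a routine multiplier computation.
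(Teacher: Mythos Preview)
Your proof is correct and follows essentially the same route as the paper: multiply \eqref{p2-sysorg0} by $\bar u_t$, \eqref{p2-sysorg1} by $\bar y_t$, differentiate \eqref{p2-sysorg2} in $x$ and multiply by $b_0 g(s)\bar\omega_x$, then add and observe the cancellations. The paper likewise handles the $s$-boundary term via \eqref{p2-bc3} at $s=0$ and the hypotheses \eqref{paper2-H} at $s=\infty$, so your treatment matches.
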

\begin{proof}
	First, multiplying  \eqref{p2-sysorg0} by $\overline{u_t }$, integrating over $(0,L)$, using integration by parts with  \eqref{p2-bc3}, using the definition of \ref{p2-b} and  \ref{p2-c}, then taking the real part, we obtain
	\begin{equation}\label{p2-utbar}
	\begin{array}{lll}
	\displaystyle  \frac{d}{dt}E_{1} (t)=\displaystyle-\,\Re\left\{b_0  \intdnb\int_{0}^{\infty}g(s)\omega_x (\cdot,s,t) \overline{u_{tx} }dsdx\right\} - \Re \left\{c_0\intdnc y_t \overline{u_t }dx \right\}.
	\end{array}
	\end{equation}
	Now,  multiplying  \eqref{p2-sysorg1} by $\overline{y_t }$, integrating over $(0,L)$, using the definition of \ref{p2-c}, then taking the real part, we get
	\begin{equation}\label{p2-E2'}
	\frac{d}{dt}E_{2} (t)= \Re\left\{c_0 \intdnc u_t \overline{y_t }dx \right\}.
	\end{equation}Deriving  \eqref{p2-sysorg2} with respect to $x$, we obtain
	\begin{equation}\label{p2-d/dx b(x)}
	\omega_{xt}(\cdot,s,t)+\omega_{xs}(\cdot,s,t)-u_{tx}=0.
	\end{equation}
	Multiplying  \eqref{p2-d/dx b(x)} by $b_0 g(s)\overline{\omega_x }(\cdot,s,t)$, integrating over $(0,\beta)\times (0,\infty)$, then taking the real part, we get	\begin{equation*}
	\frac{d}{dt}E_{3} (t)=-\frac{b_0 }{2}\intdnb\int_{0}^{\infty}g(s)\frac{d}{ds}|\omega_x (\cdot,s,t)|^2 dsdx+\Re\left\{b_0 \intdnb\int_{0}^{\infty}g(s)\overline{\omega_x } (\cdot,s,t)u_{tx}dsdx \right\}.
	\end{equation*}
	Using integration by parts with respect to $s$ in the above equation with the help of \eqref{p2-bc3} and the hypotheses \eqref{paper2-H}, we obtain
	\begin{equation}\label{p2-E3'}
	\frac{d}{dt}E_{3} (t)=\frac{b_0 }{2}\intdnb\int_{0}^{\infty}g^{\prime}(s)|\omega_x (\cdot,s,t)|^2 dsdx+\Re\left\{b_0 \intdnb\int_{0}^{\infty}g(s)\overline{\omega_x } (\cdot,s,t)u_{tx}dsdx \right\}.
	\end{equation}
	Finally,  adding  \eqref{p2-utbar}, \eqref{p2-E2'} and  \eqref{p2-E3'}, we obtain \eqref{p2-dewrtt}. The proof is thus complete.
\end{proof}\\\linebreak	
Under the hypotheses  \eqref{paper2-H} and  from Lemma \ref{p2-lemenergy},  system \eqref{p2-sysorg0}-\eqref{p2-initialcon} is dissipative in the sense that its energy is non-increasing with respect to time (i.e. $E^{\prime}(t)\leq 0$). Now, we define the following Hilbert  space $\HH$ by:
\begin{equation*}
\HH:=\left(H_{0}^1 (0,L)\times L^2 (0,L)\right)^2\times \mathcal{W }_g , 
\end{equation*}where
\begin{equation*}
\quad \mathcal{W}_g :=	L^2_g ((0,\infty) ;H^1_{L} (0 ,\beta ))\quad \text{and} \quad 
H^1_{L} (0,\beta):=\left\{\widetilde{\omega}\in H^1 (0,\beta) \ | \ \widetilde{\omega}(0)=0\right\}.
\end{equation*}
The space 
$\mathcal{W}_g$ is an Hilbert space of $ H^1_{L} (0 ,\beta )$-valued functions on $(0,\infty) $, equipped with the following inner product
\begin{equation*}
(\omega^1 ,\omega^2  )_{\mathcal{W}_g}:=\intdnb\int_{0}^{\infty}g(s)\omega^1 _x  \overline{\omega^2_x }ds dx, \quad \forall \, \omega^1 , \omega^2 \in \mathcal{W}_g .
\end{equation*}
The Hilbert space $\mathcal{H}$ is equipped with the following inner product \begin{equation}\label{p2-inner}\left(U,U^1 \right)_{\HH}=\intdx \left(\widetilde{b}(\cdot)u_x \overline{u_{x}^1 }+v\overline{v^1 }+y_x \overline{y_{x}^1 }+z\overline{z^1 }\right)dx + b_0 \intdnb\int_{0}^{\infty}g(s)\omega_x (\cdot,s) \overline{\omega_{x}^1 } (\cdot,s) ds dx,  \end{equation}where  $U=(u,v,y,z,\omega(\cdot,s))^{\top}\in \HH$ and $U^1 =(u^1 ,v^1 ,y^1 ,z^1 ,\omega^1 (\cdot,s))^{\top}\in\HH$. Now, we define the linear unbounded  operator $\AA:D(\AA)\subset \HH\longmapsto \HH$  by:
\begin{equation}
D(\AA)=\left\{\begin{array}{cc}\vspace{0.25cm}
U=(u,v,y,z,\omega(\cdot,s))^{\top}\in \HH \,\,|\,\, y\in H^2 (0,L)\cap H^{1}_0 (0,L),\,\, v,z\in H_0^1 (0,L) \\\vspace{0.25cm}
\displaystyle  \left(S_{\tilde{b}(\cdot)}(u,\omega)\right)_x  \in L^2 (0,L),\quad \omega_s (\cdot,s)\in \mathcal{W}_g ,\quad \omega(\cdot,0)=0.
\end{array}\right\} 
\end{equation}and 
\begin{equation}\label{p2-op}
\AA\begin{pmatrix}
u\\v\\y\\z\\ \omega(\cdot,s)
\end{pmatrix}=
\begin{pmatrix} 
v\\\displaystyle \left(S_{\tilde{b}(\cdot)}(u,\omega)\right)_x  -c(\cdot)z\\z\\y_{xx} +c(\cdot)v\\-\omega_s (\cdot,s) +v 
\end{pmatrix},
\end{equation} 
where $\displaystyle S_{\tilde{b}(\cdot)}(u,\omega):= \widetilde{b}(\cdot)u_x +b(\cdot)\int_{0}^{\infty}g(s)\omega_x (\cdot,s)ds$.
Moreover, from the definition of \ref{p2-b} and \ref{p2-btilde}, we have 
\begin{equation}
\tag{$S_{\tilde{b}(\cdot)}(u,\omega)$}
\label{p2-S}
 S_{\tilde{b}(\cdot)}(u,\omega) =\left\{	\begin{array}{lll}\vspace{0.15cm}
\displaystyle S_{\widetilde{b_0}}(u,\omega):= \widetilde{b_0}u_x +b_0 \intdm g(s)\omega_x (\cdot,s)ds,& x\in (0 ,\beta), &\\
\displaystyle au_x ,&x\in  (\beta ,L).&
\end{array}\right.
\end{equation}
Now, if $U=(u, u_t ,y, y_t ,\omega(\cdot,s))^{\top}$, then system \eqref{p2-sysorg0}-\eqref{p2-initialcon} can be written as the following first order evolution equation 
\begin{equation}\label{p2-firstevo}
U_t =\AA U , \quad U(0)=U_0,
\end{equation}where  $U_0 =(u_0 (\cdot,0) ,u_1 ,y_0 ,y_1 ,\omega_0 (\cdot,s) )^{\top}\in \HH$.
\begin{prop}\label{p2-mdissip}{\rm
		Under the hypotheses \eqref{paper2-H}, the unbounded linear operator $\AA$ is m-dissipative in the energy space $\HH$.} 
\end{prop}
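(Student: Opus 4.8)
To show that $\AA$ is m-dissipative, the plan is to verify the two standard ingredients: (i) $\AA$ is dissipative, i.e.\ $\Re\left(\AA U,U\right)_{\HH}\leq 0$ for every $U\in D(\AA)$, and (ii) $\AA$ is maximal, i.e.\ $\lambda I-\AA$ is surjective for some (equivalently, all) $\lambda>0$; I will take $\lambda=1$. Dissipativity is essentially the computation already carried out in Lemma \ref{p2-lemenergy}: for $U=(u,v,y,z,\omega(\cdot,s))^{\top}\in D(\AA)$, expand $\left(\AA U,U\right)_{\HH}$ using the inner product \eqref{p2-inner}. The terms coming from $v$, $\left(S_{\tilde b(\cdot)}(u,\omega)\right)_x$, $z$ and $y_{xx}$ are integrated by parts using the boundary conditions encoded in $D(\AA)$ (note $v,z\in H^1_0(0,L)$, $y\in H^2\cap H^1_0$, $\omega(\cdot,0)=0$), the coupling terms $\pm c(\cdot)z$, $\pm c(\cdot)v$ cancel after taking real parts, and the memory term $-\omega_s(\cdot,s)+v$ paired against $b_0 g(s)\omega_x$ and integrated by parts in $s$ (using $\omega(\cdot,0)=0$ and $g(s)|\omega_x|^2\to 0$ as $s\to\infty$, which follows from $\omega\in\mathcal{W}_g$ together with \eqref{2ast}) produces exactly
\[
\Re\left(\AA U,U\right)_{\HH}=\frac{b_0}{2}\intdnb\int_{0}^{\infty}g^{\prime}(s)|\omega_x(\cdot,s)|^2\,ds\,dx\leq 0,
\]
the sign being guaranteed by the hypothesis $g^{\prime}\leq -mg\leq 0$ in \eqref{paper2-H}.

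\textbf{Maximality.} Given $F=(f_1,f_2,f_3,f_4,f_5(\cdot,s))^{\top}\in\HH$, I must solve $U-\AA U=F$ for $U=(u,v,y,z,\omega)^{\top}\in D(\AA)$. Reading off the components: $v=u-f_1$, $z=y-f_3$, and the memory equation $\omega+\omega_s-v=f_5$ is a linear ODE in $s$ with the initial condition $\omega(\cdot,0)=0$, whose unique solution is
\[
\omega(x,s)=\int_0^s e^{\sigma-s}\bigl(v(x)+f_5(x,\sigma)\bigr)\,d\sigma=(1-e^{-s})v(x)+\int_0^s e^{\sigma-s}f_5(x,\sigma)\,d\sigma.
\]
Substituting $v=u-f_1$ and plugging into the first and third equations, one obtains, after eliminating $v$ and $z$, a coupled elliptic system for $(u,y)$ of the form $u-\left(\widehat b(x)u_x+h_1\right)_x+c(x)y=g_1$ and $y-y_{xx}+c(x)u=g_3$ on $(0,L)$ with Dirichlet conditions, where $\widehat b(x):=\widetilde b(x)+b(x)\int_0^\infty g(s)(1-e^{-s})\,ds>0$ is bounded below by a positive constant, and $h_1,g_1,g_3$ are explicit $L^2$ data built from $F$. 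This is handled by Lax--Milgram: on $V:=H^1_0(0,L)\times H^1_0(0,L)$ the associated bilinear form
\[
a\bigl((u,y),(\varphi,\psi)\bigr)=\intdx\Bigl(u\overline\varphi+\widehat b(x)u_x\overline{\varphi_x}+y\overline\psi+y_x\overline{\psi_x}+c(x)y\overline\varphi+c(x)u\overline\psi\Bigr)dx
\]
is continuous, and coercive because the skew-symmetric coupling $c(x)(y\overline\varphi+u\overline\psi)$ contributes zero to the real part, leaving $\Re a\bigl((u,y),(u,y)\bigr)\geq \min(1,b_*)\|(u,y)\|_V^2$. Once $(u,y)\in V$ is obtained, elliptic regularity gives $y\in H^2(0,L)$ and $\left(S_{\tilde b(\cdot)}(u,\omega)\right)_x\in L^2(0,L)$ from the equations themselves, and one checks $\omega,\omega_s\in\mathcal{W}_g$ using the explicit formula for $\omega$ and the exponential bound \eqref{2ast} on $g$; hence $U\in D(\AA)$, and $0\in\rho(\AA)$ is not needed — surjectivity of $I-\AA$ suffices. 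Since $\HH$ is Hilbert, dissipativity plus this surjectivity give m-dissipativity via Lumer--Phillips.

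\textbf{Main obstacle.} The genuinely delicate point is the treatment of the memory variable $\omega$: one must verify that the $s$-ODE solution indeed lands in $\mathcal{W}_g=L^2_g((0,\infty);H^1_L(0,\beta))$ together with $\omega_s\in\mathcal{W}_g$, and that the boundary term at $s=\infty$ in the integration by parts of the dissipativity computation really vanishes. Both rest on controlling $\int_0^\infty g(s)|\omega_x(\cdot,s)|^2\,ds$, for which the decay $g(s)\leq g_0 e^{-ms}$ from \eqref{2ast} is exactly what is needed to dominate the $(1-e^{-s})v(x)$ part and the convolution $\int_0^s e^{\sigma-s}f_{5,x}(x,\sigma)\,d\sigma$ (the latter estimated by Young's convolution inequality against the weight $g$). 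The remaining steps — integration by parts, cancellation of coupling terms, Lax--Milgram — are routine.
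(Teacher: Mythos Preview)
Your argument is correct in substance and follows the same two-step Lumer--Phillips pattern as the paper, but you take a slightly different route in the maximality step: the paper solves $-\AA U=F$ (i.e.\ shows $0\in\rho(\AA)$) rather than $(I-\AA)U=F$. At $\lambda=0$ one has $v=-f^1$ and $z=-f^3$ determined directly from the data, so the memory variable $\omega$ and the coupling terms $c(\cdot)z,\,c(\cdot)v$ are all absorbed into the right-hand side; the Lax--Milgram form then collapses to the uncoupled $\intdx\widetilde b(\cdot)u_x\overline{\phi_x}\,dx+\intdx y_x\overline{\psi_x}\,dx$, and the paper closes by invoking openness of the resolvent set to get $R(\lambda I-\AA)=\HH$ for small $\lambda>0$. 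Your choice $\lambda=1$ avoids that last step but forces you to carry the coupling into the bilinear form. Here you have a sign slip: the $y$-equation reads $y-y_{xx}-c(x)u=g_3$ (not $+c(x)u$), so the coupling contribution to the form is $\intdx c(x)\bigl(y\overline\varphi-u\overline\psi\bigr)\,dx$; with this corrected sign your claim that its real part vanishes on the diagonal is valid, whereas with the sign as written the term is real and could spoil coercivity. One incidental benefit of the paper's route is that $0\in\rho(\AA)$ is obtained as a byproduct and is reused later in the strong-stability argument (Lemmas \ref{p2-ker} and \ref{p2-surj}).
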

\begin{proof}
	For all $U=(u,v,y,z,\omega(\cdot,s))^{\top}\in D(\AA)$, from \eqref{p2-inner} and \eqref{p2-op}, we have 
	$$
	\begin{array}{lll}
\displaystyle	\Re(\AA U,U)_{\HH}=\displaystyle\Re\left\{\intdx\widetilde{b}(\cdot)v_x \overline{u_x}dx\right\}+\Re\left\{\intdx\left(S_{\tilde{b}(\cdot)}(u,\omega)\right)_x \overline{v}dx\right\}+\Re\left\{\intdx z_x \overline{y_x}dx\right\}+\Re\left\{\intdx y_{xx} \overline{z}dx\right\}\vspace{0.25cm}\\\hspace{2cm}\displaystyle +\, \Re\left\{b_0 \intdnb\intdm g(s)v_x \overline{\omega_x}(\cdot,s)dsdx \right\}-\Re\left\{b_0 \intdnb\intdm g(s)\omega_{xs}(\cdot,s) \overline{\omega_x}(\cdot,s)dsdx \right\}.
\end{array}
	$$
	Using integration by parts to the second and fourth terms in the above equation, then using the fact that $U\in D(\AA)$ , we obtain 
	$$
	\Re(\AA U,U)_{\HH}=-\Re\left\{b_0 \intdnb\intdm g(s)\omega_{xs}(\cdot,s) \overline{\omega_x}(\cdot,s)dsdx \right\}=-\frac{b_0}{2}\intdnb\intdm g(s)\frac{d}{ds}|\omega_x (\cdot,s)|^2dsdx.
	$$
	Using integration by parts with respect to $s$ in the above equation and the fact that $\omega(\cdot,0)=0$ with the help of hypotheses \eqref{paper2-H}, we get 
	\begin{equation}\label{p2-reauu}
	\begin{array}{lll}
	\displaystyle	\Re \left(\AA U,U\right)_{\HH} =\displaystyle\frac{b_0}{2}\intdnb\int_{0}^{\infty}g^{\prime}(s)|\omega_x (\cdot,s)|^2 dsdx\leq0,\end{array}\end{equation}
	which implies that $\AA$ is dissipative. Now, let us prove that $\AA$ is maximal. For this aim, let $F=(f^1 ,f^2 ,f^3 ,f^4 ,f^5 (\cdot,s) )^{\top}\in \HH$, we want to find $U=(u,v,y,z,\omega(\cdot,s))^{\top}\in D(\AA)$ unique solution  of \begin{equation}\label{p2--au=f}
	-\AA U=F.
	\end{equation}Equivalently, we have the following system
	\begin{eqnarray}
	-v&=&f^1 , \label{p2-f1}\\
	-\left(S_{\tilde{b}(\cdot)}(u,\omega) \right)_x +c(\cdot)z &=&f^2 ,\label{p2-f2}\\
	-z&=&f^3 , \label{p2-f3}\\
	-y_{xx}-c(\cdot)v&=&f^4 , \label{p2-f4}\\
	\omega_s (\cdot,s)-v&=&f^5(\cdot,s) , \label{p2-f5}
	\end{eqnarray}with the following boundary conditions \begin{equation}\label{p2-boundary1mdissip}
	u(0)=u(L)=y(0)=y(L)=0,\ \ \omega (\cdot,0)=0\ \ \text{in}\ \ (0,L) \ \ \text{and} \ \ \omega(0,s)=0  \ \ \text{in} \ \ (0,\infty).
	\end{equation}
	From \eqref{p2-f1}, \eqref{p2-f5} and \eqref{p2-boundary1mdissip}, we get
	\begin{equation}\label{p2-2.25}
	\omega(x,s)=\int_{0}^{s}f^5 (x,\xi)d\xi-sf^1,\ \ (x,s)\in (0,L)\times(0,\infty).
	\end{equation}Since $v=-f^1 \in H^{1}_0 (0,L) $ and $f^5 (\cdot,s)\in \mathcal{W}_g $, then from \eqref{p2-f5} and \eqref{p2-2.25} we get  $\omega_s (\cdot,s)\in \mathcal{W}_g $ and $\omega (\cdot,s) \in H^1_L (0,\beta)$ a.e. in $(0,\infty)$. Now, to obtain that $\omega(\cdot,s) \in \mathcal{W}_g$, it is sufficient to prove that $\displaystyle \intdm g(s)\|\omega_x(\cdot,s)\|_{L^2_{0,\beta}}^2 ds<\infty$ where $\|\cdot\|_{L^2_{0,\beta}}:=\|\cdot\|_{L^2 (0,\beta)}$ . For this aim, let $\epsilon_1,\epsilon_2 >0 $ , under the hypotheses \eqref{paper2-H}, we have 
	\begin{equation}\label{p2-winl2-1}
	\int_{\epsilon_1}^{\epsilon_2}g(s)\|\omega_x(\cdot,s)\|_{L^2_{0,\beta}}^2ds \leq -\frac{1}{m}	\int_{\epsilon_1}^{\epsilon_2}g^{\prime}(s)\|\omega_x(\cdot,s)\|_{L^2_{0,\beta}}^2ds.
	\end{equation}
	Using integration by parts in \eqref{p2-winl2-1}, we obtain
	\begin{equation*}\label{p2-winl2-2}
	\int_{\epsilon_1}^{\epsilon_2}g(s)\|\omega_x(\cdot,s)\|_{L^2_{0,\beta}}^2ds \leq \frac{1}{m}\left[	\int_{\epsilon_1}^{\epsilon_2}g(s)\frac{d}{ds}\left(\|\omega_x(\cdot,s)\|_{L^2_{0,\beta}}^2\right) ds+g(\epsilon_1)\|\omega_x (\cdot,\epsilon_1)\|^2_{L^2_{0,\beta} } -g\left(\epsilon_2\right)\left\|\omega_x \left(\cdot,\epsilon_2\right)\right\|^2_{L^2_{0,\beta} }\right]. 
	\end{equation*}
	Moreover, from Young's inequality, we have 
	\begin{equation}\label{p2-2.27}
	\begin{array}{lll}
	\displaystyle  \frac{1}{m}	\int_{\epsilon_1}^{\epsilon_2}g(s)\frac{d}{ds}\left(\|\omega_x(\cdot,s)\|_{L^2_{0,\beta}}^2\right) ds&=&\displaystyle \frac{2}{m}\int_{\epsilon_1}^{\epsilon_2}g(s)\Re\left\{\intdnb\omega_x (\cdot,s)\overline{\omega_{sx}}(\cdot,s) dx\right\}ds\vspace{0.25cm}\\
	&\leq&	\displaystyle \frac{1}{2}\int_{\epsilon_1}^{\epsilon_2}g(s)\|\omega_x(\cdot,s)\|_{L^2_{0,\beta}}^2ds+\frac{2}{m^2 }\int_{\epsilon_1}^{\epsilon_2}g(s)\|\omega_{sx}(\cdot,s)\|_{L^2_{0,\beta}}^2ds.
	 \end{array}
	\end{equation}
	Inserting \eqref{p2-2.27} in the above inequality, we get 
	\begin{equation*}
		\int_{\epsilon_1}^{\epsilon_2}g(s)\|\omega_x(\cdot,s)\|_{L^2_{0,\beta}}^2ds \leq \frac{4}{m^2}	\int_{\epsilon_1}^{\epsilon_2}g(s)\|\omega_{sx}(\cdot,s)\|_{L^2_{0,\beta}}^2 ds+\frac{2}{m}g(\epsilon_1)\|\omega_x (\cdot,\epsilon_1)\|^2_{L^2_{0,\beta} } -\frac{2}{m}g\left(\epsilon_2\right)\left\|\omega_x \left(\cdot,\epsilon_2\right)\right\|^2_{L^2_{0,\beta} 
		 }. 
	\end{equation*}
	Using the fact that $\omega_s(\cdot,s)\in \mathcal{W}_g$, $\omega(\cdot,0)=0$ and the hypotheses \eqref{paper2-H} in the above inequality, (in particular \eqref{2ast}) we obtain, as $\epsilon_1 \to 0^+$ and $\epsilon_2\to \infty$, that 
		\begin{equation*}
	\int_{0}^{\infty}g(s)\|\omega_x(\cdot,s)\|_{L^2_{0,\beta}}^2ds <\infty,
	\end{equation*}
	and consequently, $\omega(\cdot,s)\in \mathcal{W}_g $.
	Now, see the definition of \ref{p2-S}, substituting \eqref{p2-f1}, \eqref{p2-f3} and \eqref{p2-2.25} in \eqref{p2-f2} and \eqref{p2-f4}, we get the following system 
	\begin{eqnarray}
	\left[\widetilde{b}(\cdot)u_x +b(\cdot)\left(\int_{0}^{\infty}g(s)\left(\int_{0}^s f^{5}_x (\cdot,\xi)d\xi-sf^{1}_x \right)ds\right)\right]_x +c(\cdot)f^3 =-\,f^2, \label{ulax}\\ y_{xx}-c(\cdot)f^1=-\,f^4,\label{ylax}\\\vspace{0.25cm}
	u(0)=u(L)=y(0)=y(L)=0. \label{bculaxylax}
	\end{eqnarray}
	Let $(\phi ,\psi) \in H^{1}_0 (0,L)  \times H^{1}_0 (0,L)$. Multiplying  \eqref{ulax} and \eqref{ylax} by $\overline{\phi}$ and $\overline{\psi}$ respectively, integrating over $(0,L)$, then using formal integrations by parts, we obtain
\begin{equation}\label{ulax1}
		\begin{array}{lll}
		\displaystyle	\intdx \widetilde{b}(\cdot)u_x \overline{\phi_x }dx=	\displaystyle \intdx f^2 \overline{\phi}dx+c_0 \intdnc f^3 \overline{\phi}dx-b_0\intdnb\int_{0}^{\infty}g(s)\left(\int_{0}^{s}f^5_x (\cdot,\xi)d\xi-sf^{1}_x \right)\overline{\phi_x }dsdx
		\end{array}
		\end{equation}
	and 
	\begin{equation}\label{ylax2}
	\intdx y_x \overline{\psi_x }dx=\intdx f^4 \overline{\psi}dx-c_0 \intdnc f^1 \overline{\psi}dx.
	\end{equation} 
	Adding  \eqref{ulax1} and \eqref{ylax2}, we obtain
	\begin{equation}\label{vf}
	\mathcal{B}((u,y),(\phi,\psi))=\mathcal{L}(\phi,\psi), \quad\forall (\phi,\psi)\in H^{1}_0 (0,L)\times  H^{1}_0 (0,L), 
	\end{equation} where $$\mathcal{B}((u,y),(\phi,\psi))=\displaystyle\intdx \widetilde{b}(\cdot) u_x \overline{\phi_x }dx+\intdx y_x \overline{\psi_x }dx$$ and $$\begin{array}{lll} \displaystyle \mathcal{L}(\phi,\psi)=	\displaystyle \intdx \left(f^2 \overline{\phi}+f^4 \overline{\psi}\right)dx+c_0 \intdnc \left(f^3 \overline{\phi}-f^1 \overline{\psi}\right)dx-b_0\intdnb\int_{0}^{\infty}g(s)\left(\int_{0}^{s}f^5_x (\cdot,\xi)d\xi-sf^{1}_x \right)\overline{\phi_x }dsdx.  \end{array}$$
	It is easy to see that,  $\mathcal{B}$ is a sesquilinear, continuous and coercive form on $\left( H^{1}_0 (0,L)  \times H^{1}_0 (0,L)\right)^2 $ and $\mathcal{L}$ is a linear and continuous form on $ H^{1}_0 (0,L) \times H^{1}_0 (0,L)$. Then, it follows by Lax-Milgram theorem that \eqref{vf} admits a unique solution $(u,y)\in H^{1}_0 (0,L)\times H^{1}_0 (0,L) $. By using the classical elliptic regularity, we deduce that the system \eqref{ulax}-\eqref{bculaxylax} admits a unique solution  $(u,y)\in H^{1}_0 (0,L)\times \left(H^2 (0,L)\cap H^{1}_0 (0,L)\right) $ such that $(S_{\tilde{b}(\cdot)}(u,\omega))_x\in L^2 (0,L)$ and consequently,  $U \in D(\AA) $ is a unique solution of \eqref{p2--au=f}. Then, $\mathcal{A}$ is an isomorphism and since $\rho\left(\mathcal{A}\right)$ is open set of $\mathbb{C}$ (see Theorem 6.7 (Chapter III) in \cite{Kato01}),  we easily get $R(\lambda I -\mathcal{A}) = {\mathcal{H}}$ for a sufficiently small $\lambda>0 $. This, together with the dissipativeness of $\mathcal{A}$, imply that   $D\left(\mathcal{A}\right)$ is dense in ${\mathcal{H}}$   and that $\mathcal{A}$ is m-dissipative in ${\mathcal{H}}$ (see Theorems 4.5, 4.6 in  \cite{Pazy01}). The proof is thus complete.
\end{proof}\\\linebreak
According to Lumer-Philips theorem (see \cite{Pazy01}), Proposition \ref{p2-mdissip} implies that the operator $\AA$ generates a $C_{0}$-semigroup of contractions $e^{t\AA}$ in $\HH$ which gives the well-posedness of \eqref{p2-firstevo}. Then, we have the following result:
\begin{Thm}{\rm
		Under the hypotheses \eqref{paper2-H}, for all $U_0 \in \HH$,  System \eqref{p2-firstevo} admits a unique weak solution $$U(x,s,t)=e^{t\AA}U_0(x,s) \in C^0 (\R^+ ,\HH).
		$$ Moreover, if $U_0 \in D(\AA)$, then the system \eqref{p2-firstevo} admits a unique strong solution $$U(x,s,t)=e^{t\AA}U_0(x,s) \in C^0 (\R^+ ,D(\AA))\cap C^1 (\R^+ ,\HH).$$}
\end{Thm}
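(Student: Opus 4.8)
The plan is to read the result off from the generation theory for dissipative operators, since all the substantive work has already been carried out in Proposition \ref{p2-mdissip}. First I would recall that Proposition \ref{p2-mdissip} asserts precisely that $\AA$ is m-dissipative on $\HH$: it is dissipative, $\Re(\AA U,U)_{\HH}\le 0$ for every $U\in D(\AA)$, and $R(\lambda I-\AA)=\HH$ for some (hence, by the resolvent being an open set, all sufficiently small) $\lambda>0$; moreover the maximality argument there also yields that $D(\AA)$ is dense in $\HH$.

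Next I would invoke the Lumer--Phillips theorem (see \cite{Pazy01}): a densely defined m-dissipative operator on a Hilbert space is the infinitesimal generator of a $C_0$-semigroup of contractions $(e^{t\AA})_{t\ge 0}$. Then the abstract Cauchy problem \eqref{p2-firstevo} is well posed, and it remains only to translate the classical statements about orbits of a $C_0$-semigroup into the language of the theorem. For arbitrary $U_0\in\HH$, the map $t\mapsto e^{t\AA}U_0$ lies in $C^0(\R^+,\HH)$ and is, by definition, the unique weak (mild) solution of $U_t=\AA U$, $U(0)=U_0$. If in addition $U_0\in D(\AA)$, the standard regularity result for $C_0$-semigroups (again \cite{Pazy01}) gives that this orbit belongs to $C^1(\R^+,\HH)\cap C^0(\R^+,D(\AA))$ and satisfies $U_t(t)=\AA U(t)$ pointwise, i.e.\ it is the unique strong solution. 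Uniqueness in either class is automatic from the contraction (hence boundedness) property of the semigroup, or, alternatively, from the energy identity of Lemma \ref{p2-lemenergy} applied to the difference of two solutions.

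The only points requiring a little care — rather than being genuine obstacles — are bookkeeping ones: matching the exact regularity asserted for the strong solution with the precise form of the generation/regularity theorem quoted from \cite{Pazy01}, and remembering that, through the change of variable \eqref{p2-changeofvar}, a solution $U=(u,u_t,y,y_t,\omega)^{\top}$ of \eqref{p2-firstevo} corresponds to a genuine solution $(u,y)$ of the original past-history system \eqref{paper2-sysorig}, with $\omega$ encoding the relative history of $u$. No analytical difficulty remains at this stage, since dissipativity (via the energy computation) and maximality (via Lax--Milgram together with elliptic regularity) were exactly the content of Proposition \ref{p2-mdissip}.
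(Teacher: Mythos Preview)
Your proposal is correct and is exactly the paper's approach: the paper does not give a separate proof of this theorem but simply notes that, by the Lumer--Phillips theorem (citing \cite{Pazy01}), Proposition~\ref{p2-mdissip} implies $\AA$ generates a $C_0$-semigroup of contractions on $\HH$, whence the stated well-posedness follows. Your additional remarks on uniqueness and on the correspondence via \eqref{p2-changeofvar} are sound but go slightly beyond what the paper records.
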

\section{Strong Stability}\label{p2-sec3}
\noindent This section is devoted to the proof of the strong stability of the $C_0$-semigroup $\left(e^{t\mathcal{A}}\right)_{t\geq0}$.
\noindent To obtain the strong stability of the $C_0$-semigroup $\left(e^{t\mathcal{A}}\right)_{t\geq0}$, we use the theorem of Arendt and Batty in \cite{Arendt01} (see Theorem \ref{App-Theorem-A.2} in Appendix \ref{p2-appendix}).

\begin{theoreme}\label{p2-strongthm2}
{\rm	Assume that the hypotheses \eqref{paper2-H} hold. Then, the $C_0-$semigroup of contraction $\left(e^{t\AA}\right)_{t\geq 0}$ is strongly stable in $\HH$; i.e., for all $U_0\in \HH$, the solution of \eqref{p2-firstevo} satisfies 
	$$
	\lim_{t\rightarrow +\infty}\|e^{t\AA}U_0\|_{\HH}=0.
	$$}
\end{theoreme}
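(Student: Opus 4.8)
The plan is to invoke the Arendt--Batty criterion (Theorem \ref{App-Theorem-A.2}): since, by Proposition \ref{p2-mdissip}, $\AA$ generates a bounded $C_0$-semigroup of contractions, strong stability will follow once we show $i\R\subseteq\rho(\AA)$. Proposition \ref{p2-mdissip} already gives $0\in\rho(\AA)$, so it remains, for each $\lambda\in\R^*$, to prove that $i\lambda I-\AA$ is injective and surjective. I stress at the outset that the resolvent of $\AA$ is \emph{not} compact -- the history component lives in the infinite-dimensional space $\mathcal{W}_g$ -- so surjectivity will not come for free from injectivity, and that is where the effort goes.

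\emph{Injectivity (no purely imaginary eigenvalues).} For $\lambda\in\R^*$ I would take $U=(u,v,y,z,\omega)^\top\in D(\AA)$ with $\AA U=i\lambda U$ and exploit the dissipation identity \eqref{p2-reauu}: taking the real part of $(\AA U,U)_{\HH}$ gives $\int_0^\beta\int_0^\infty g'(s)|\omega_x(\cdot,s)|^2\,ds\,dx=0$, and since $g>0$ and $g'(s)\le-mg(s)$ by \eqref{paper2-H} this forces $\omega_x\equiv0$, hence $\omega\equiv0$ on $(0,\beta)\times(0,\infty)$ because $\omega(0,s)=0$. The transport equation (last line of \eqref{p2-op}) then yields $v=i\lambda\omega+\omega_s\equiv0$ on $(0,\beta)$, so $u\equiv0$ there. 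Restricting to the overlap $(\alpha,\beta)=(0,\beta)\cap(\alpha,\gamma)$, where $c\equiv c_0$, the second equation gives $c_0z=\big(S_{\widetilde{b}(\cdot)}(u,\omega)\big)_x-i\lambda v=0$, so $z\equiv0$ and $y\equiv0$ on $(\alpha,\beta)$. On the remaining subintervals the eigenvalue problem is a linear first-order ODE system in $(u,u_x,y,y_x)$ with constant coefficients -- coupled through $c_0$ on $(\beta,\gamma)$, decoupled on $(0,\alpha)$ and $(\gamma,L)$ -- so Cauchy--Lipschitz uniqueness propagates the zero Cauchy data. Here one uses that $u,y,y_x$ are continuous and that $S_{\widetilde{b}(\cdot)}(u,\omega)\in H^1(0,L)\hookrightarrow C([0,L])$: since $S_{\widetilde{b}(\cdot)}(u,\omega)$ vanishes on $(0,\beta)$ and equals $a\,u_x$ on $(\beta,L)$ by \ref{p2-S}, its continuity at $\beta$ forces $u_x(\beta^+)=0$. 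On $(0,\alpha)$ one invokes $v=0$, $c=0$ to conclude $y\equiv0$ there as well. Hence $U=0$.

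\emph{Surjectivity.} For $\lambda\in\R^*$ and $F=(f^1,\dots,f^5)^\top\in\HH$ the plan is to solve $(i\lambda I-\AA)U=F$ explicitly. Setting $v=i\lambda u-f^1$, $z=i\lambda y-f^3$, the transport equation becomes the scalar ODE $\omega_s+i\lambda\omega=v+f^5$ in $s$ with $\omega(\cdot,0)=0$, solved by
\[
\omega(\cdot,s)=\frac{1-e^{-i\lambda s}}{i\lambda}\,v+\int_0^s e^{-i\lambda(s-\tau)}f^5(\cdot,\tau)\,d\tau,
\]
which I would show lies in $\mathcal{W}_g$ by the same integrability estimates (using the exponential bound \eqref{2ast}) as in the proof of Proposition \ref{p2-mdissip}. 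Substituting $\int_0^\infty g(s)\omega_x\,ds$ into $S_{\widetilde{b}(\cdot)}(u,\omega)$ and then into the second and fourth components of \eqref{p2-op} reduces everything to a variational problem: find $(u,y)\in H^1_0(0,L)\times H^1_0(0,L)$ with $\mathcal B_\lambda\big((u,y),(\phi,\psi)\big)=\mathcal L_\lambda(\phi,\psi)$ for all $(\phi,\psi)$. The principal part of $\mathcal B_\lambda$ carries the coefficient $\widetilde b(\cdot)+i\lambda\,b(\cdot)\kappa(\lambda)$ with $\kappa(\lambda)=\frac1{i\lambda}\int_0^\infty g(s)(1-e^{-i\lambda s})\,ds$; on $(0,\beta)$ this equals $a-b_0\int_0^\infty g(s)e^{-i\lambda s}\,ds$, whose real part is $\ge a-b_0\widetilde g=\widetilde{b_0}>0$, and it equals $a>0$ on $(\beta,L)$. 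Thus $\mathcal B_\lambda$ is a compact perturbation of a coercive form on $\big(H^1_0(0,L)\big)^2$ (the remaining terms are $L^2$-pairings, compact by Rellich), so the Fredholm alternative applies; any solution of the homogeneous problem reconstructs, via the formulas for $v,z,\omega$, to an element of $\ker(i\lambda I-\AA)=\{0\}$ by the injectivity step, so the inhomogeneous problem has a unique solution. Elliptic regularity then gives $y\in H^2(0,L)$ and $\big(S_{\widetilde{b}(\cdot)}(u,\omega)\big)_x\in L^2(0,L)$, i.e. $U\in D(\AA)$, so $i\lambda I-\AA$ is onto.

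Combining these with $0\in\rho(\AA)$ yields $i\R\subseteq\rho(\AA)$, hence $\sigma(\AA)\cap i\R=\emptyset$, and Theorem \ref{App-Theorem-A.2} gives $\lim_{t\to\infty}\|e^{t\AA}U_0\|_{\HH}=0$ for all $U_0\in\HH$. The main obstacle is the surjectivity step: because $\AA^{-1}$ is not compact, one cannot argue that $\sigma(\AA)\cap i\R$ reduces to point spectrum, so the resolvent equation must be solved directly, and the delicate point there is verifying that the reconstructed history variable $\omega$ genuinely belongs to $\mathcal{W}_g$, which rests on the decay \eqref{2ast} of the kernel $g$.
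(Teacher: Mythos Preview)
Your proposal is correct and follows essentially the same route as the paper: Arendt--Batty, injectivity via the dissipation identity \eqref{p2-reauu} to kill $\omega$ on $(0,\beta)$ and then ODE propagation of zero Cauchy data across the subintervals, and surjectivity by reducing the resolvent equation to a variational problem whose principal part has coefficient $\widehat b(\cdot)=a-b(\cdot)\int_0^\infty g(s)e^{-i\lambda s}\,ds$ and applying the Fredholm alternative (coercive principal part plus compact $L^2$-remainder, kernel trivial by the injectivity step). The only cosmetic differences are that the paper writes the propagation on $(\beta,\gamma)$ via the matrix exponential and invokes Holmgren on $(0,\alpha)$ and $(\gamma,L)$ where you appeal to Cauchy--Lipschitz, and the paper leaves the coercivity of $\mathcal B_1$ as ``easy to see'' where you spell out $\Re\,\widehat b\ge\widetilde{b_0}>0$; your emphasis on checking $\omega\in\mathcal W_g$ for the reconstructed history is well placed and matches what the paper does explicitly in Proposition~\ref{p2-mdissip}.
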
\noindent According to Theorem \ref{App-Theorem-A.2}, to prove Theorem \ref{p2-strongthm2}, we need to prove that the operator $\AA$ has no pure imaginary eigenvalues and $\sigma(\AA)\cap i\R $ is countable. The proof of Theorem \ref{p2-strongthm2} has been divided into the following two Lemmas.
\begin{lem}\label{p2-ker}
	{\rm Under the hypotheeis \eqref{paper2-H}, we have 
		$$\ker(i\la I-\AA)=\{0\},\ \  \forall \la \in \R.$$

	}
\end{lem}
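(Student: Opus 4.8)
The plan is to fix $\lambda\in\R$ and $U=(u,v,y,z,\omega(\cdot,s))^{\top}\in D(\AA)$ satisfying $\AA U=i\lambda U$, and to show $U=0$. In components this reads $v=i\lambda u$, $(\Sbt)_x-c(\cdot)z=i\lambda v$, $z=i\lambda y$, $y_{xx}+c(\cdot)v=i\lambda z$, and $-\omega_s(\cdot,s)+v=i\lambda\omega(\cdot,s)$. The first step is to use dissipativity: taking the real part of $(\AA U,U)_{\HH}=i\lambda\|U\|_{\HH}^2$ and invoking \eqref{p2-reauu} gives $\frac{b_0}{2}\intdnb\int_{0}^{\infty}g^{\prime}(s)|\omega_x(\cdot,s)|^2\,ds\,dx=0$. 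Since by \eqref{paper2-H} the kernel $g$ is positive with $g^{\prime}(s)\le -mg(s)<0$, the integrand is $\le 0$, so it must vanish, i.e. $\omega_x(\cdot,s)=0$ in $(0,\beta)$ for a.e. $s$; then $\omega(0,s)=0$ yields $\omega\equiv 0$ on $(0,\beta)\times(0,\infty)$ (i.e. $\omega=0$ in $\mathcal{W}_g$). Substituting into the fifth equation gives $v=\omega_s+i\lambda\omega=0$ on $(0,\beta)$, and by \eqref{p2-S} we have $\Sbt=au_x$ on $(\beta,L)$ and $\Sbt=\widetilde{b_0}\,u_x$ on $(0,\beta)$.

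Next I would distinguish two cases. If $\lambda=0$, then $v\equiv 0$ and $z\equiv 0$ on $(0,L)$; the fourth equation gives $y_{xx}=0$ with $y(0)=y(L)=0$, hence $y\equiv 0$; the fifth gives $\omega_s=0$, so with $\omega(\cdot,0)=0$ we get $\omega\equiv 0$; the second equation gives $(\Sbt)_x=0$, so $\Sbt=\widetilde{b}(\cdot)u_x$ is constant on $(0,L)$, and together with $u(0)=u(L)=0$ and the explicit two-valued form \eqref{p2-btilde} of $\widetilde{b}(\cdot)$ (note $\widetilde{b_0}>0$, $a>0$) this forces the constant to be $0$, hence $u\equiv 0$. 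Now assume $\lambda\neq 0$. From $v=i\lambda u=0$ on $(0,\beta)$ we get $u\equiv 0$ there, so $u_x=0$ and $\Sbt=0$ on $(0,\beta)$; the second equation then reads $-c(\cdot)z=i\lambda v=0$ on $(0,\beta)$, and since $c=c_0>0$ on $(\alpha,\beta)$ this gives $z=0$ there, whence $y=0$ on $(\alpha,\beta)$ because $z=i\lambda y$.

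The last step is a unique-continuation argument on $(\beta,L)$, where the history variable is absent, $\widetilde{b}\equiv a$ and $\Sbt=au_x$. First I record the Cauchy data at $x=\beta$: since $u\in H_0^1(0,L)\subset C^0$ vanishes on $(0,\beta)$, $u(\beta)=0$; since $\Sbt\in H^1(0,L)\subset C^0$ vanishes on $(0,\beta)$, $au_x(\beta^+)=0$, i.e. $u_x(\beta)=0$; since $y\in H^2(0,L)\subset C^1$ vanishes on $(\alpha,\beta)$, $y(\beta)=y_x(\beta)=0$. On $(\beta,\gamma)$ the equations become, using $v=i\lambda u$, $z=i\lambda y$, $c=c_0$, the coupled linear second-order system $au_{xx}+\lambda^2 u=i\lambda c_0 y$, $y_{xx}+\lambda^2 y=-i\lambda c_0 u$; writing it as $W^{\prime}=MW$ with $W=(u,u_x,y,y_x)^{\top}$, $M$ constant, and $W(\beta)=0$, uniqueness for linear ODE Cauchy problems (Picard--Lindel\"of, or a Gronwall estimate) gives $W\equiv 0$ on $(\beta,\gamma)$, so $u=y=v=z=0$ there. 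Repeating the interface bookkeeping at $x=\gamma$ (continuity of $u$, of the flux $au_x$, and of $y,y_x$) gives vanishing Cauchy data at $\gamma$ for the decoupled system $au_{xx}+\lambda^2 u=0$, $y_{xx}+\lambda^2 y=0$ on $(\gamma,L)$ (where $c=0$), hence $u=y=0$ there too. Collecting all pieces, $U\equiv 0$, which is the claim.

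I expect the only delicate point to be the interface bookkeeping: $u_x$ is not \emph{a priori} continuous across $\beta$ or $\gamma$, so the vanishing must be transmitted through the continuity of the flux $\Sbt$ (this is where $b\equiv 0$ on $(\beta,L)$ enters), rather than through $u_x$ directly. The remaining ingredients---the dissipativity identity and uniqueness for linear ODE Cauchy problems with zero data---are routine; the case $\lambda=0$ has to be isolated because the implication ``$v=0\Rightarrow u=0$'' uses $\lambda\neq 0$.
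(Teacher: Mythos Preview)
Your argument follows the same route as the paper's proof (dissipativity forces $\omega\equiv 0$ on $(0,\beta)$, then $u=v=0$ on $(0,\beta)$, then $y=z=0$ on $(\alpha,\beta)$, then ODE-type unique continuation across the remaining subintervals), and the interface bookkeeping via continuity of the flux $\Sbt$ is correct and is exactly what the paper uses (in the equivalent form ``$\Sbt=au_x$ on all of $(0,L)$, hence $u\in C^1$''). Your direct treatment of $\lambda=0$ is also fine, while the paper simply quotes $0\in\rho(\AA)$.

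There is, however, a genuine gap: you never show $y=z=0$ on $(0,\alpha)$. On that interval $c\equiv 0$, so the second equation gives no information about $z$, and your unique-continuation step is carried out only to the right of $\beta$. As written, after ``collecting all pieces'' you have $u=v=\omega=0$ on $(0,L)$ but only $y=z=0$ on $(\alpha,L)$. The missing step is short and of the same flavor as your $(\gamma,L)$ argument: since $y\in H^2(0,L)\subset C^1$ vanishes on $(\alpha,\beta)$, one has $y(\alpha)=y_x(\alpha)=0$; on $(0,\alpha)$ the equation for $y$ decouples ($c=0$, and $v=0$ there) to $y_{xx}+\lambda^2 y=0$, so the zero Cauchy data at $\alpha$ forces $y\equiv 0$ on $(0,\alpha)$, hence $z=i\lambda y=0$ there as well. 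The paper includes precisely this step (it invokes Holmgren, which in this one-dimensional constant-coefficient setting is just ODE uniqueness). With this addition your proof is complete and essentially coincides with the paper's.
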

\begin{proof}
	From Proposition \ref{p2-mdissip}, we have $0\in\rho(\AA)$. We still need to show the result for $\la \in \R^{\star}$. For this aim, suppose that there exists a real number $\la\neq0$ and $U=(u,v,y,z,\omega(\cdot,s))^{\top}\in D(\AA)$ such that 
	\begin{equation}\label{p2-AU=ilaU}
	\AA U=i\la U.
	\end{equation}Equivalently, we have the following system
	\begin{eqnarray}
	v&=&i\la u , \label{p2-f1ss}\\
	\left(S_{\tilde{b}(\cdot)}(u,\omega) \right)_x -c(\cdot)z &=&i\la v ,\label{p2-f2ss}\\
	z&=&i\la y , \label{p2-f3ss}\\
	y_{xx}+c(\cdot)v&=&i\la z , \label{p2-f4ss}\\
	-\omega_s (\cdot,s)+v&=&i\la \omega(\cdot,s) . \label{p2-f5ss}
	\end{eqnarray}
	From  \eqref{p2-reauu} and \eqref{p2-AU=ilaU}, we obtain 
\begin{equation}\label{p2-dissi=0}
		0=\Re \left(i\la U,U\right)=\Re\left(\AA U,U\right)_{\HH}=\frac{b_0}{2}\intdnb \int_{0}^{\infty}g^{\prime}(s)|\omega_x (\cdot,s)|^2 dsdx .
		\end{equation}
		Thus, we have
	\begin{equation}\label{p2-3.8}
	\omega_x (\cdot,s)=0\ \ \text{in}\ \ (0,\beta )\times (0,\infty).
	\end{equation}
	From \eqref{p2-3.8}, we have
	\begin{equation}\label{p2-3.9}
	\omega(\cdot,s)=k(s)\ \ \text{in}\ \ (x,s)\in (0,\beta)\times (0,\infty),
	\end{equation}
	where $k(s)$ is a constant depending on $s$. Then, from \eqref{p2-3.9} and the fact that $\omega(\cdot,s)\in \mathcal{W}_g $ $\left(\text{i.e.} \  \omega(0,s)=0\right)$, we get 
	\begin{equation}\label{p2-3.10}
	\omega(\cdot,s)=0\ \ \text{in}\ \ (0,\beta )\times (0,\infty).
	\end{equation}
	From \eqref{p2-f1ss}, \eqref{p2-f5ss} and the fact that $\omega(\cdot,0)=0$, we deduce that
	\begin{equation}\label{p2-3.12p}
	\omega(\cdot,s)=u(e^{-i\la s}-1), \ \ \text{in}\ \ (0,L)\times(0,\infty).
	\end{equation}
	From \eqref{p2-f1ss}, \eqref{p2-f5ss} and \eqref{p2-3.10}, we obtain
	\begin{equation}\label{p2-3.14}
	u=v=0 \ \ \text{in}\ \ (0 ,\beta).
	\end{equation}Inserting \eqref{p2-f1ss} and \eqref{p2-f3ss} in \eqref{p2-f2ss} and \eqref{p2-f4ss}, then using \eqref{p2-3.8} together with the definition of \ref{p2-S} and \ref{p2-b}, we obtain the following system 
	\begin{eqnarray}
	\la^2 u+(\widetilde{b}(\cdot)u_x )_x -c(\cdot)i\la y &=&0, \ \ \text{in}\ \ (0,L),\label{p2-3.15}\\
	\la^2 y+y_{xx} +c(\cdot)i\la u &=&0, \ \ \text{in}\ \ (0,L),\label{p2-3.16}\\
	u(0)=u(L)=y(0)=y(L)&=&0.
	\end{eqnarray}From \eqref{p2-3.14}, \eqref{p2-3.15}, the definition of \ref{p2-c} and \eqref{p2-f3ss}, we obtain 
	\begin{equation}\label{p2-3.17}
	y=z=0\ \ \text{in}\ \ (\alpha,\beta ).
	\end{equation}Thus, from \eqref{p2-3.10},  \eqref{p2-3.14} and \eqref{p2-3.17}, we obtain 
	\begin{equation}\label{p2-3.19pppp}
	U=0\ \ \text{in} \ \ (\alpha ,\beta).
	\end{equation}
Now, from \eqref{p2-3.17} and the fact that $y\in C^1 ([0,L])$, we get 
	\begin{equation}\label{p2-3.19pp}
	y(\alpha)=y_x (\alpha)=0.
	\end{equation}Next, from \eqref{p2-3.16}, \eqref{p2-3.19pp} and the definition of \ref{p2-c}, we obtain the following system 
	\begin{eqnarray}
	\la^2 y+y_{xx}&=&0, \ \ \text{in}\ \ (0,\alpha),\\
	y(0)=	y(\alpha )=y_x (\alpha )&=&0.
	\end{eqnarray}Thus, from the above system  and by using Holmgren uniqueness theorem, we obtain 
	\begin{equation}\label{p2-3.26p}
	y=0 \ \ \text{in}\ \ (0,\alpha).
	\end{equation}
	Therefore, from  \eqref{p2-f3ss}, \eqref{p2-3.12p}, \eqref{p2-3.14} and \eqref{p2-3.26p}, we obtain 
	\begin{equation}\label{p2-3.30p}
	U=0 \ \ \text{in}\ \ (0,\alpha).
	\end{equation}
		According to the definition of \ref{p2-S} and \ref{p2-btilde}, we obtain
		\begin{equation}\label{p2-3.19p}
		S_{\tilde{b}(\cdot)}(u,\omega)= au_x-b(\cdot)\widetilde{g}u_x +b(\cdot)\int_{0}^{\infty}g(s)\omega_x (\cdot,s)ds 
		\end{equation}
		From \eqref{p2-3.8}, \eqref{p2-3.14}, \eqref{p2-3.19p} and the definition of \ref{p2-b}, we get 
		\begin{equation}\label{p2-3.19pr}
		S_{\tilde{b}(\cdot)}(u,\omega)= au_x\ \ \text{in}\ (0,L) \  \text{and consequently}\ \		\left(S_{\tilde{b}(\cdot)}(u,\omega)\right)_x= au_{xx} \ \text{in}\ (0,L).
		\end{equation}Thus, from \eqref{p2-3.19pr} and the fact that $U \in D(\AA)$, we obtain 
		\begin{equation}\label{p2-3.20p}
		u_{xx}\in L^2 (0,L) \  \ \text{and consequently} \ \ u \in C^1 ([0,L]).
		\end{equation}
	Now, from \eqref{p2-3.14}, \eqref{p2-3.17}, \eqref{p2-3.20p} and the fact that $y\in C^1 ([0,L])$, we obtain 
	\begin{equation}
	u(\beta )=u_x (\beta)=	y(\beta )=y_x (\beta )=0.
	\end{equation}  
	Next, from the definition of \ref{p2-btilde} and \ref{p2-c}, the System \eqref{p2-3.15}-\eqref{p2-3.16} can be written in $(\beta ,\gamma )$ as the following system 
	\begin{eqnarray}
	\la^2 u+au_{xx} -c_0 i\la y &=&0, \ \ \text{in}\ \ (\beta,\gamma ),\label{p2-3.19}\\
	\la^2 y+y_{xx} +c_0 i\la u &=&0, \ \ \text{in}\ \ (\beta,\gamma ),\label{p2-3.20}\\
	u(\beta )=u_x (\beta )=	y(\beta )=y_x (\beta )&=&0\label{p2-3.21}.
	\end{eqnarray}
	Let $V=(u,u_x ,y,y_x )^{\top}$, then  system \eqref{p2-3.19}-\eqref{p2-3.21} can be written as the following \begin{equation}\label{p2-de}
	V_x =B V,\ \ V(\beta)=0.
	\end{equation}where
	\begin{equation*}
	B =  \begin{pmatrix}
	0&1&0&0\\
	-a^{-1}\la^2 &0&a^{-1}i\la c_0 &0\\
	0&0&0&1\\
	-i\la c_0 &0&-\la^2 &0
	\end{pmatrix}.
	\end{equation*}The solution of the differential equation \eqref{p2-de} is given by
	\begin{equation}\label{p2-solde}
	V(x)=e^{B (x-\beta )}V (\beta ),
	\end{equation}
	Thus, from \eqref{p2-solde} and the fact that $V(\beta )=0$, we get 
	\begin{equation}\label{p2-3.26}
	V=0\ \ \text{in}\ \ (\beta ,\gamma ) \ \ \text{and consequently} \ \ u=u_x =y=y_x =0 \ \ \text{in}\ \ (\beta,\gamma).
	\end{equation}
	So, from \eqref{p2-3.12p}
	and 
	\eqref{p2-3.26}, we get
	\begin{equation}\label{p2-3.27}
	U=0 \ \ \text{in}\ \ (\beta,\gamma).
	\end{equation}
	Now, from \eqref{p2-3.26} and the fact that $u,y \in C^1 ([0,L])$, we obtain
	\begin{equation}\label{p2-3.28}
	u(\gamma )=u_x (\gamma)=y(\gamma)=y_x (\gamma)=0.
	\end{equation}
	Next, from the definition of \ref{p2-btilde} and \ref{p2-c}, the system \eqref{p2-3.15}-\eqref{p2-3.16} can be written in $(\gamma ,L)$ as the following system
	\begin{eqnarray}
	\la^2 u+au_{xx}&=&0, \ \ \text{in}\ \ (\gamma ,L),\label{p2-3.29}\\
	\la^2 y+y_{xx}  &=&0, \ \ \text{in}\ \ (\gamma,L),\label{p2-3.30}\\
	u(L)=u(\gamma )=u_x (\gamma)&=&0,\\
	y(L)=y(\gamma)=y_x (\gamma)&=&0.
	\end{eqnarray}From the above system  and by using Holmgren uniqueness theorem, we deduce that 
	\begin{equation}\label{p2-3.33}
	u=y=0 \ \ \text{in}\ \ (\gamma ,L).
	\end{equation}
	Thus, from \eqref{p2-f1ss}, \eqref{p2-f3ss}, \eqref{p2-3.12p} and \eqref{p2-3.33}, we obtain 
	\begin{equation}\label{p2-3.45}
	U=0 \ \ \text{in}\ \ (\gamma ,L).
	\end{equation}Finally, from \eqref{p2-3.19pppp} \eqref{p2-3.30p}, \eqref{p2-3.27} and \eqref{p2-3.45}, we obtain
	\begin{equation}
	U=0 \ \ \text{in}\ \ (0,L).
	\end{equation}The proof is thus complete.
\end{proof}
\begin{lem}\label{p2-surj}
	{\rm Under the hypotheses $\eqref{paper2-H}$, for all $\la \in \R $, we have 
		$$R(i\la I-\AA )=\HH.$$
		
	}
\end{lem}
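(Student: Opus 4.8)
The plan is to prove surjectivity of $i\lambda I - \AA$ for every real $\lambda$ by reducing the resolvent equation $(i\lambda I - \AA)U = F$ to an elliptic boundary value problem in the variables $(u,y)$ alone, after solving explicitly for $v$, $z$ and $\omega$. Given $F = (f^1,f^2,f^3,f^4,f^5(\cdot,s))^\top \in \HH$, the equations force $v = i\lambda u - f^1$ and $z = i\lambda y - f^3$, so $v,z \in H^1_0(0,L)$ once $u,y \in H^1_0(0,L)$ are found. The memory component is obtained from the transport equation $i\lambda\omega + \omega_s - v = f^5$, which is a linear first-order ODE in $s$ with the initial condition $\omega(\cdot,0)=0$; solving by variation of constants gives
\begin{equation*}
\omega(x,s) = e^{-i\lambda s}\int_0^s e^{i\lambda\xi}\bigl(v(x) + f^5(x,\xi)\bigr)\,d\xi,
\end{equation*}
and one must check, exactly as in the proof of Proposition \ref{p2-mdissip} using \eqref{2ast} and the bound $g'(s)\le -mg(s)$, that $\omega(\cdot,s) \in \mathcal{W}_g$ and $\omega_s(\cdot,s)\in\mathcal{W}_g$; the estimate is the same integration-by-parts-in-$s$ argument combined with Young's inequality, so I would simply refer back to it. Crucially, $\int_0^\infty g(s)\omega_x(\cdot,s)\,ds$ can then be expressed as a bounded linear functional of $u_x$ plus a term depending only on the data, which is what makes the reduction to $(u,y)$ possible.

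Next I would substitute these expressions into the remaining two equations \eqref{p2-f2ss}-type relations $-(S_{\tilde b(\cdot)}(u,\omega))_x + c(\cdot)z = \ldots$ and $-y_{xx} - c(\cdot)v = \ldots$. Writing $S_{\tilde b(\cdot)}(u,\omega) = \widetilde{b}(\cdot)u_x + b(\cdot)\int_0^\infty g(s)\omega_x\,ds$ and using the explicit $\omega$, the first equation becomes, after collecting the $u_x$-terms, $-\bigl(\mathfrak{a}(\cdot)u_x\bigr)_x + (\text{lower order in }u) = (\text{data})$ where the effective coefficient, on $(0,\beta)$, is $\widetilde{b}_0 + b_0\int_0^\infty g(s)e^{-i\lambda s}\bigl(\int_0^s e^{i\lambda\xi}d\xi\bigr)\,ds$ and equals $a$ on $(\beta,L)$; one checks this coefficient has positive real part bounded away from $0$ (this uses $\widetilde{b}_0>0$ and $\Re\int_0^\infty g(s)e^{-i\lambda s}(\int_0^s e^{i\lambda\xi}d\xi)ds \ge 0$, or more simply one absorbs the $i\lambda u$ and $\lambda^2 u$ terms and notes the principal part stays $-(\widetilde b(\cdot)u_x)_x$ up to compact perturbation). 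So the coupled system for $(u,y)$ is of the form $\mathcal{A}_\lambda(u,y) = G$ with $\mathcal{A}_\lambda$ a compact perturbation of the (invertible, by the Lax–Milgram argument already in the proof of Proposition \ref{p2-mdissip}) operator $(u,y)\mapsto(-(\widetilde b(\cdot)u_x)_x, -y_{xx})$ on $H^1_0(0,L)^2$.

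I would then invoke Fredholm alternative: $\mathcal{A}_\lambda$ is a compact perturbation of an isomorphism, hence it is surjective if and only if it is injective, and its injectivity is precisely the statement $\ker(i\lambda I - \AA) = \{0\}$ already established in Lemma \ref{p2-ker}. (One must take a small amount of care to phrase the Fredholm argument at the level of the unbounded operator $\AA$: since $0\in\rho(\AA)$ by Proposition \ref{p2-mdissip}, one writes $i\lambda I - \AA = (I + i\lambda\AA^{-1})(-\AA)$ on $D(\AA)$, notes $\AA^{-1}$ is compact in $\HH$ — which follows from compact Sobolev embeddings on $(0,L)$ together with the trace-type control of $\omega$ coming from the $\mathcal{W}_g$-estimate above — and concludes that $i\lambda I - \AA$ is Fredholm of index $0$; then Lemma \ref{p2-ker} gives triviality of the kernel and hence surjectivity.) The main obstacle is the compactness of $\AA^{-1}$, or equivalently the compactness of the solution map in the memory variable: the space $\mathcal{W}_g$ is infinite-dimensional in $s$ and does not embed compactly, so one cannot get compactness of the full resolvent naively. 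The resolution is that the component of $U$ that feeds back into the dynamics is only $\int_0^\infty g(s)\omega_x(\cdot,s)\,ds$, and the map $F \mapsto (u,v,y,z)$ does land in a compactly embedded subspace; one then argues that, although $\AA^{-1}$ itself need not be compact on all of $\HH$, the Fredholm reduction to the $(u,y)$-system only requires compactness of the perturbation there, which is genuine. I expect this is exactly the point the authors will handle, and I would follow the same route: carry out the elliptic reduction cleanly, identify the compact perturbation on $H^1_0(0,L)^2$, and close the argument with Fredholm alternative plus Lemma \ref{p2-ker}.
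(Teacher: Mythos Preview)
Your proposal is correct and follows essentially the same route as the paper: solve for $v$, $z$, $\omega$ explicitly, reduce to a variational problem for $(u,y)$ on $H^1_0(0,L)^2$, split the associated sesquilinear form into a coercive principal part plus a compact lower-order perturbation, and close with the Fredholm alternative together with Lemma~\ref{p2-ker}. You also correctly anticipate the obstacle---the full resolvent is \emph{not} compact because of the $\mathcal{W}_g$ component---and correctly resolve it by carrying out the Fredholm reduction at the $(u,y)$ level rather than on $\HH$.

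One small correction: your parenthetical alternative, that ``the principal part stays $-(\widetilde b(\cdot)u_x)_x$ up to compact perturbation'', is not right. After substituting the explicit $\omega$ and regrouping the $u_x$-terms, the principal coefficient on $(0,\beta)$ becomes $\widehat{b}_0 := a - b_0\int_0^\infty g(s)e^{-i\lambda s}\,ds$, not $\widetilde{b}_0$; the difference $-(b(\cdot)(\widetilde g - g_\lambda)u_x)_x$ is a genuine second-order operator and is \emph{not} compact from $H^1_0$ to $H^{-1}$. So you must stay with your first option and verify directly that $\Re\widehat{b}(\cdot)\ge a - b(\cdot)\widetilde g = \widetilde b(\cdot)>0$, which gives coercivity of the principal form; this is exactly what the paper does.
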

\begin{proof}
	From Proposition \ref{p2-mdissip}, we have $0\in\rho(\AA)$. We still need to show the result for $\la \in \R^{\star}$. For this aim, let $F=(f^1,f^2,f^3,f^4,f^5 (\cdot,s))^{\top}\in \HH$, we want to find $U=(u,v,y,z,\omega(\cdot,s))^{\top}\in D(\AA)$ solution of \begin{equation}\label{p2-2.78}
	(	i\la I -\AA)U=F.
	\end{equation}Equivalently, we have the following system 
	\begin{eqnarray}
	i\la u	-v&=&f^1 , \label{p2-f1sss}\\
	i\la v	-\left(S_{\tilde{b}(\cdot)} \right)_x +c(\cdot)z &=&f^2 ,\label{p2-f2sss}\\
	i\la y	-z&=&f^3 , \label{p2-f3sss}\\
	i\la z	-y_{xx}-c(\cdot)v&=&f^4 , \label{p2-f4sss}\\
	i\la \omega (\cdot,s)+	\omega_s (\cdot,s)-v&=&f^5(\cdot,s) , \label{p2-f5sss}
	\end{eqnarray}
	with the following boundary conditions \begin{equation}\label{p2-boundary1mdissips}
	u(0)=u(L)=y(0)=y(L)=0,\ \ \omega (\cdot,0)=0\ \ \text{in}\ \ (0,L) \ \ \text{and} \ \ \omega(0,s)=0  \ \ \text{in} \ \ (0,\infty).
	\end{equation}
	From \eqref{p2-f1sss}, \eqref{p2-f5sss} and \eqref{p2-boundary1mdissips}, we have 
	\begin{equation}\label{p2-omegas}
	\omega(\cdot,s)=\frac{1}{i\la}(i\la u-f^1 )(1-e^{-i\la s})+\int_{0}^{s}f^5 (\cdot,\xi)e^{i\la (\xi-s)}d\xi, \ \ (x,s)\in (0,L)\times (0,\infty).
	\end{equation}
	See the definition of \ref{p2-S}, inserting \eqref{p2-f1sss}, \eqref{p2-f3sss} and \eqref{p2-omegas} in \eqref{p2-f2sss} and \eqref{p2-f4sss}, we obtain the following system 
	{\small	\begin{equation}\label{p2-syssurj}
		\left\{\begin{array}{lll}
		\displaystyle	-\la^2 u -\left[\widehat{b}(\cdot)u_x +\frac{1}{i\la }b(\cdot)\int_{0}^{\infty}g(s)(1-e^{-i\la s})f^1_x ds +b(\cdot)\int_{0}^{\infty}g(s)\int_{0}^{s}f^5_x (\cdot,\xi)e^{i\la(\xi-s)}d\xi ds \right]_x +i\la c(\cdot)y=F_1, \vspace{0.15cm}\\ \displaystyle -\la^2 y-y_{xx}-i\la c(\cdot)u=F_2, \vspace{0.15cm}\\ u(0)=u(L)=y(0)=y(L)=0,\vspace{0.15cm}
		\end{array}\right.
		\end{equation}}where \begin{equation}\displaystyle\widehat{b}(\cdot)=a-b(\cdot)\int_{0}^{\infty}g(s)e^{-i\la s}ds,\ \ F_1 =f^2+ c(\cdot)f^3 +i\la f^1\ \  \text{and} \ \ F_2 =f^4 -c(\cdot)f^1 +i\la f^3 .\end{equation}
	Let $(\phi ,\psi) \in H^{1}_0 (0,L) \times H^{1}_0 (0,L)$. Multiplying the first equation of  \eqref{p2-syssurj} and the second equation of \eqref{p2-syssurj} by $\overline{\phi}$ and $\overline{\psi}$ respectively, integrating over $(0,L)$, then using integrations by parts, we obtain 
	\begin{equation}\label{p2-3.59}
	\begin{array}{lll}
	&&\displaystyle	-\la^2 \intdx u \overline{\phi}dx +\intdx\widehat{b}(\cdot)u_x \overline{\phi_x} dx +\frac{b_0 }{i\la }\intdnb\int_{0}^{\infty}g(s)(1-e^{-i\la s})f^1_x \overline{\phi_x}  dsdx \vspace{0.25cm}\\ &&\displaystyle +\,b_0 \intdnb\int_{0}^{\infty}g(s)\int_{0}^{s}e^{i\la(\xi-s)}f^5_x   (\cdot,\xi)\overline{\phi_x} d\xi dsdx  +i\la c_0 \intdnc y\overline{\phi}dx=\intdx F_1 \overline{\phi}dx
	\end{array}
	\end{equation} and 
	\begin{equation}\label{p2-3.60}
	-\la^2 \intdx y\overline{\psi}dx +\intdx y_x \overline{\psi_x} dx -i\la c_0 \intdnc u\overline{\psi}dx =\intdx F_2 \overline{\psi}dx.
	\end{equation}Adding  \eqref{p2-3.59} and \eqref{p2-3.60}, we get \begin{equation}\label{p2-2.90}
	\mathcal{B}((u,y),(\phi,\psi))=\mathcal{L}(\phi,\psi), \quad\forall (\phi,\psi)\in \mathbb{V}:=H^{1}_0 (0,L)\times  H^{1}_0 (0,L), 
	\end{equation}where  $$
	\mathcal{B}((u,y),(\phi,\psi))=\mathcal{B}_1 ((u,y),(\phi,\psi))+\mathcal{B}_2 ((u,y),(\phi,\psi))$$
	with \begin{equation}\label{p2-3.62}
	\left\{\begin{array}{lll}
	\displaystyle \mathcal{B}_1 ((u,y),(\phi,\psi))=\intdx \widehat{b}(\cdot)u_x \overline{\phi_x} dx +\intdx y_x \overline{\psi_x}dx , \vspace{0.25cm}\\\displaystyle\mathcal{B}_2 ((u,y),(\phi,\psi))=-\la^2 \intdx (u\overline{\phi}+y\overline{\psi})dx -i\la c_0 \intdx (u\overline{\psi}-y\overline{\phi})dx
	\end{array}\right.
	\end{equation} and{\small \begin{equation*}
		\begin{array}{lll}
		\displaystyle \mathcal{L}(\phi,\psi)=\displaystyle\intdx (F_1 \overline{\phi}+F_2 \overline{\psi})dx -\frac{b_0}{i\la }\intdnb \int_{0}^{\infty}g(s)(1-e^{-i\la s})f^1_x \overline{\phi_x} dsdx-b_0 \intdnb\int_{0}^{\infty}g(s)\left( \int_{0}^{s}e^{i\la (\xi-s)}f^5_x (\cdot,\xi)d\xi\right)\overline{\phi_x}dsdx. \vspace{0.25cm}
		\end{array}
		\end{equation*}}Let $\mathbb{V}^{\prime}$  be the dual space of $\mathbb{V}$. Let us define the following operators 
	\begin{equation}
	\begin{array}{lll}
	\mathbb{B}:\ \ \mathbb{V}&\longmapsto&\mathbb{V}^{\prime} \\
	\	\ \ \ (u,y)&\longmapsto& \mathbb{B}(u,y)
	\end{array} \ \ \text{and}\ \ 
	\begin{array}{lll}
	\mathbb{B}_i :\ \ \mathbb{V}&\longmapsto&\mathbb{V}^{\prime} \\
	\	\ \ \ (u,y)&\longmapsto& \mathbb{B}_i (u,y)
	\end{array},\ \ i\in \{1,2\},
	\end{equation}such that 
	\begin{equation}\label{p2-3.63}
	\left\{	\begin{array}{lll}
	\displaystyle	(\mathbb{B}(u,y))(\phi,\psi)=\mathcal{B}((u,y),(\phi,\psi)),\ \ \forall (\phi,\psi) \in \mathbb{V},\vspace{0.15cm}\\\displaystyle 	(\mathbb{B}_i (u,y))(\phi,\psi)=\mathcal{B}_i ((u,y),(\phi,\psi)),\ \ \forall (\phi,\psi) \in \mathbb{V},\ i\in \{1,2\}.
	\end{array}\right.
	\end{equation}We need to prove that the operator $\mathbb{B}$ is an isomorphism. For this aim, we divide the proof into three steps:\\\linebreak
	\textbf{Step 1.} In this step, we want to prove that the operator $\mathbb{B}_1 $ is an isomorphism. For this aim, it is easy to see that $\mathcal{B}_1 $ is sesquilinear, continuous and coercive form on $\mathbb{V}$. Then, from \eqref{p2-3.63} and Lax-Milgram theorem, the operator $\mathbb{B}_1 $ is an isomorphism.\\\linebreak
	\textbf{Step 2.} In this step, we want to prove that the operator $\mathbb{B}_2 $ is compact. For this aim, from \eqref{p2-3.62} and \eqref{p2-3.63} we have 
	\begin{equation}
		|\mathcal{B}_2 ((u,y),(\phi,\psi))|\lesssim \|(u,y)\|_{\left(L^2(0,L)\right)^2}\|(\phi,\psi)\|_{\left(L^2(0,L)\right)^2},
	\end{equation} 
	and consequently, using the compact embedding from $\mathbb{V}$ into $\left(L^2 (0,L)\right)^2 $, we deduce that $\mathbb{B}_2 $ is a compact operator.
	 \\\linebreak
	Therefore, from the above steps, we obtain that the operator $\mathbb{B}=\mathbb{B}_1 +\mathbb{B}_2 $ is a Fredholm operator of index zero. Now, following Fredholm alternative, we still need to prove that the operator $\mathbb{B}$ is injective to obtain that the operator $\mathbb{B}$ is an isomorphism.\\\linebreak
	\textbf{Step 3.} In this step, we want to prove that the operator $\mathbb{B}$ is injective (i.e. $\ker(\mathbb{B})=\{0\}$). For this aim, let $(\widetilde{u},\widetilde{y})\in \ker(\mathbb{B})$ which gives 
	\begin{equation*}
	\mathcal{B}((\widetilde{u},\widetilde{y}),(\phi,\psi))=0,\ \ \forall (\phi,\psi)\in \mathbb{V}.
	\end{equation*}Equivalently, we have 
	\begin{equation*}
	 \intdx \widehat{b}(\cdot)\widetilde{u}_x \overline{\phi_x} dx +\intdx \widetilde{y}_x \overline{\psi_x} dx -\la^2 \intdx (\widetilde{u}\overline{\phi}+\widetilde{y}\overline{\psi})dx -i\la  \intdx  c(\cdot) (\widetilde{u}\overline{\psi}-\widetilde{y}\overline{\phi})dx=0, \ \ \forall (\phi,\psi)\in \mathbb{V}.
	\end{equation*}
	Thus, we find that 
	
			\begin{equation*}
		\left\{\begin{array}{rrr}
		\displaystyle	-\la^2 \widetilde{u} -(\widehat{b}(\cdot)\widetilde{u}_x)_x +i\la c(\cdot)\widetilde{y}=0, \vspace{0.15cm}\\ \displaystyle -\la^2 \widetilde{y}-\widetilde{y}_{xx}-i\la c(\cdot)\widetilde{u}=0, \vspace{0.15cm}\\ \widetilde{u}(0)=\widetilde{u}(L)=\widetilde{y}(0)=\widetilde{y}(L)=0.\vspace{0.15cm}
		\end{array}\right.
		\end{equation*}Therefore, the vector $\widetilde{U}$ defined by $$\widetilde{U}=(\widetilde{u},i\la\widetilde{u},\widetilde{y},i\la \widetilde{y},(1-e^{-i\la s})\widetilde{u})^{\top}$$ belongs to $D(\AA )$  and we have $$i\la \widetilde{U}-\AA\widetilde{U}=0,$$and consequently $\widetilde{U}\in \ker(i\la I-\AA)$. Then, according to Lemma \ref{p2-ker}, we obtain $\widetilde{U}=0$ and consequently $\widetilde{u}=\widetilde{y}=0$ and $\ker(\mathbb{B})=\{0\}$.\\\linebreak
		Finally, from Step 3 and Fredholm alternative, we deduce that the operator $\mathbb{B}$ is isomorphism. It is easy to see that the  operator $\mathcal{L}$ is a linear and continuous form on $\mathbb{V}$. Consequently,  \eqref{p2-2.90} admits a unique solution $(u,y)\in \mathbb{V} $. By using the classical elliptic regularity, we deduce that  $U \in D(\AA) $ is a unique solution of \eqref{p2-2.78}. The proof is thus complete. 
\end{proof}\\\linebreak
\textbf{Proof of Theorem \ref{p2-strongthm2}.} From Lemma \ref{p2-ker}, we obtain the the operator $\AA $ has no pure imaginary eigenvalues (i.e. $\sigma_p (\AA)\cap i\R=\emptyset$). Moreover, from Lemma \ref{p2-surj} and with the help of the closed graph theorem of Banach, we deduce that $\sigma(\AA )\cap i\R=\emptyset$. Therefore, according to Theorem \ref{App-Theorem-A.2}, we get that the C$_0 $-semigroup $(e^{t\AA})_{t\geq0}$ is strongly stable. The proof is thus complete. \xqed{$\square$}

\begin{rk}
{\rm We mention \cite{Akil-Timo} for a direct approach of the strong stability of Timoshenko  system in the absence of compactness of the resolvent.}
\end{rk}

\section{Exponential and Polynomial Stability}\label{Exp-Pol}
\noindent In this section, under the hypotheses \eqref{paper2-H}, we show the influence of the ratio of the wave propagation speed on the stability of system \eqref{p2-sysorg0}-\eqref{p2-initialcon}. Our main result in this part is the following  theorems.
\begin{theoreme}\label{exp-a=1}{\rm
Under the hypotheses \eqref{paper2-H}, if $a=1$, then the $C_0-$semigroup $e^{t\AA}$ is exponentially stable; i.e. there exists constants $M\geq 1$ and $\epsilon>0$ independent of $U_{0}$ such that 
$$
\|e^{t\AA}U_{0}\|_{\HH}\leq Me^{-\epsilon t}\|U_{0}\|_{\HH}
$$}
\end{theoreme}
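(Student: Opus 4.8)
The plan is to use the frequency-domain characterization of exponential stability (the Huang–Prüss theorem): since $\AA$ generates a $C_0$-semigroup of contractions on $\HH$ and, by Theorem~\ref{p2-strongthm2}, $i\R\subset\rho(\AA)$, it suffices to prove the uniform resolvent bound
\begin{equation*}
\sup_{\la\in\R}\big\|(i\la I-\AA)^{-1}\big\|_{\mathcal{L}(\HH)}<\infty.
\end{equation*}
I will argue by contradiction: if this fails, there exist a sequence $\la_n\in\R$ (with, after the usual reduction, $|\la_n|\to\infty$) and $U_n=(u_n,v_n,y_n,z_n,\omega_n)^\top\in D(\AA)$ with $\|U_n\|_{\HH}=1$ such that $(i\la_n I-\AA)U_n=:F_n\to 0$ in $\HH$. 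Writing this out componentwise gives $i\la_n u_n-v_n=f_n^1$, $i\la_n v_n-(S_{\widetilde b(\cdot)}(u_n,\omega_n))_x+c(\cdot)z_n=f_n^2$, $i\la_n y_n-z_n=f_n^3$, $i\la_n z_n-y_{n,xx}-c(\cdot)v_n=f_n^4$, and $i\la_n\omega_n+\omega_{n,s}-v_n=f_n^5$. The goal is to contradict $\|U_n\|_{\HH}=1$ by showing $\|U_n\|_{\HH}\to0$.

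**Main steps.** First, from $\Re(i\la_n U_n - \AA U_n, U_n)_{\HH} = -\Re(\AA U_n,U_n)_{\HH}$ together with the dissipation identity \eqref{p2-reauu}, the damping term is controlled:
\begin{equation*}
-\frac{b_0}{2}\intdnb\int_0^\infty g'(s)|\omega_{n,x}(\cdot,s)|^2\,ds\,dx = \Re(F_n,U_n)_{\HH}\to 0,
\end{equation*}
and since $g'\le -mg$ by \eqref{paper2-H}, this yields $\|\omega_n\|_{\mathcal{W}_g}^2 = \intdnb\int_0^\infty g(s)|\omega_{n,x}|^2\,ds\,dx\to 0$, i.e. the memory component vanishes. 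Next, I would recover information on $v_n$ (hence $u_{n,x}$) on the memory-effective zone $(0,\beta)$: using the equation for $\omega_n$, solving $i\la_n\omega_n+\omega_{n,s}-v_n=f_n^5$ with $\omega_n(\cdot,0)=0$ gives $\omega_n(\cdot,s)=\frac{1}{i\la_n}(v_n+f_n^1)(1-e^{-i\la_n s})+\int_0^s f_n^5(\cdot,\xi)e^{i\la_n(\xi-s)}d\xi$ (using $i\la_n u_n = v_n+f_n^1$), and by testing against a suitable weight $g(s)$ and integrating in $s$ one extracts $\|v_{n,x}\|_{L^2(0,\beta)}\to 0$, so $v_n\to 0$ in $H^1(0,\beta)$ and therefore $u_{n,x}\to 0$ in $L^2(0,\beta)$ (dividing by $\la_n$ is harmless since $|\la_n|\to\infty$). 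The crucial new input is $a=1$: on $(\beta,L)$ the coefficient $\widetilde b(\cdot)=a=1$ matches the wave speed of the $y$-equation, which is exactly what makes the multiplier estimates close. I would then run a sequence of localized multiplier estimates — using cutoff functions supported successively on the overlap region $(\alpha,\gamma)$ (where $c=c_0$ couples the two equations), then propagating to $(0,\alpha)$ and $(\gamma,L)$ — with multipliers of the form $q(x)u_{n,x}$, $q(x)y_{n,x}$ (piecewise-linear or smooth $q$ adapted to the interface points $\alpha,\beta,\gamma$) and zeroth-order multipliers $p(x)u_n$, $p(x)y_n$. The coupling term $c_0 v_n$ in the $z_n$-equation lets one transfer the already-established smallness of $v_n,u_n$ near $(\alpha,\beta)$ into smallness of $y_n,z_n$ there, and then the matched-speed multiplier identities spread this to the whole interval, giving $\|u_{n,x}\|_{L^2(0,L)},\|v_n\|_{L^2(0,L)},\|y_{n,x}\|_{L^2(0,L)},\|z_n\|_{L^2(0,L)}\to0$, hence $\|U_n\|_{\HH}\to0$ — contradiction.

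**The main obstacle.** The delicate point is the book-keeping across the three interface points $\alpha<\beta<\gamma$ where $b(\cdot)$ and $c(\cdot)$ jump, combined with the non-smoothness of $\widetilde b(\cdot)$: one cannot use a single global multiplier, and the interface/boundary terms produced by integration by parts at $x=\alpha,\beta,\gamma$ must be shown to be lower-order (absorbed using the already-proven smallness of $u_n,v_n$ on $(0,\beta)$, trace estimates, and Gagliardo–Nirenberg-type interpolation to handle $\la_n$-weighted traces). In particular, controlling the trace terms $u_n(\beta),u_{n,x}(\beta),y_n(\alpha),y_{n,x}(\alpha)$ etc. requires care because the natural energy norm only controls $H^1$ in $x$, and the frequency $\la_n$ must be tracked so that no $\la_n$-divergent term survives. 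The hypothesis $a=1$ is what prevents a residual boundary term at the interface $x=\beta$ (the mismatch term is proportional to $(a-1)$), so the proof genuinely uses equal wave speeds; absent this, only the polynomial rate of the companion theorem is attainable. I expect roughly: (i) dissipation $\Rightarrow$ $\omega_n\to0$; (ii) $\omega_n$-equation $\Rightarrow$ $v_n\to0$ in $H^1(0,\beta)$; (iii) coupling $\Rightarrow$ $y_n,z_n\to0$ near $(\alpha,\beta)$; (iv) matched-speed multipliers $\Rightarrow$ global decay; (v) contradiction.
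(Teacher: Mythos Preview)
Your overall strategy matches the paper's exactly: contradict the Huang--Pr\"uss resolvent bound (with $\ell=0$), use the dissipation identity to kill $\omega_n$ on $(0,\beta)$, then exploit $a=1$ so that the cross-speed defect term (which in the paper appears as $|a-1|\,|\la|\int|u_x||y_x|$) vanishes, and propagate by localized multipliers with cutoffs adapted to $\alpha,\beta,\gamma$.

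One technical point in your step (ii) needs correction. From the formula $\omega_n(\cdot,s)=u_n(1-e^{-i\la_n s})+\int_0^s f_n^5 e^{i\la_n(\xi-s)}d\xi$, testing against $g(s)$ recovers $\int_0^\beta|u_{n,x}|^2\to 0$, \emph{not} $\|v_{n,x}\|_{L^2(0,\beta)}\to 0$; the latter would amount to $\la_n u_{n,x}\to 0$, which the argument does not yield, so the claim ``$v_n\to 0$ in $H^1(0,\beta)$'' is not available (and is not needed). The paper first gets $\int_0^\beta|u_x|^2=o(1)$ (Lemma~\ref{p2-1stlemps}) and then obtains $\int_{\varepsilon}^{\beta-\varepsilon}|v|^2=o(1)$ by a \emph{separate} multiplier: multiply \eqref{p2-f2ps} by $-h_1\int_0^\infty g(s)\overline{\omega}(\cdot,s)\,ds$ and use the $\omega$-equation to turn $-i\la\overline{\omega}$ into $\overline{v}$ (Lemma~\ref{estimation-v}). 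Similarly, the transfer to $y_x$ and $z$ on the overlap is carried out not through the zero-order coupling alone but via the cross-multiplier pair $i\la^{-1}h_2\overline{y_{xx}}$ against \eqref{p2-f2ps} and $i\la^{-1}h_2(\overline{S_{\widetilde b(\cdot)}}(u,\omega))_x$ against \eqref{p2-f4ps}; their sum is exactly where the factor $(a-1)$ emerges and vanishes for $a=1$ (Lemmas~\ref{estimation-yx}--\ref{estimation-z}). With these two corrections, the remainder of your outline --- propagation via $\theta=xh_4$ and $\theta=(x-L)h_5$ together with pointwise trace control at $\beta-3\varepsilon$ and $\gamma$ --- coincides with Lemmas~\ref{p2-3rdlemma}--\ref{p2-5thlemma}.
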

\begin{theoreme}\label{pol-an1}{\rm
Under the  hypotheses \eqref{paper2-H}, if $a\neq 1$, then there exists $C>0$ such that for every $U_{0}\in D(\AA)$, we have 
$$
E(t)\leq \frac{C}{t}\|U_0\|^2_{D(\AA)},\quad t>0.
$$}
\end{theoreme}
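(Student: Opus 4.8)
The plan is to use the Borichev--Tomilov theorem (frequency-domain characterization of polynomial decay): since $\AA$ generates a bounded $C_0$-semigroup and $i\R\subset\rho(\AA)$ (this follows from Lemmas~\ref{p2-ker} and \ref{p2-surj}), it suffices to show that
\[
\limsup_{|\la|\to\infty}\frac{1}{|\la|^2}\,\big\|(i\la I-\AA)^{-1}\big\|_{\LL(\HH)}<\infty,
\]
which yields exactly the decay rate $E(t)\lesssim t^{-1}\|U_0\|^2_{D(\AA)}$ for initial data in $D(\AA)$. Equivalently, arguing by contradiction, I would assume there exist sequences $(\la_n)\subset\R$ with $|\la_n|\to\infty$ and $(U_n)=(u_n,v_n,y_n,z_n,\omega_n)\subset D(\AA)$ with $\|U_n\|_{\HH}=1$ such that
\[
\la_n^2\,(i\la_n I-\AA)U_n =: F_n = (f^1_n,f^2_n,f^3_n,f^4_n,f^5_n)\longrightarrow 0 \quad\text{in }\HH,
\]
and derive the contradiction $\|U_n\|_{\HH}\to 0$. (I drop the subscript $n$ below.)

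The key steps, in order. \emph{Step 1 (dissipation gives the history term).} Taking the real part of $\langle(i\la I-\AA)U,U\rangle_{\HH}$ and using \eqref{p2-reauu} together with \eqref{2ast}, one gets $\int_0^\beta\int_0^\infty g(s)|\omega_x(\cdot,s)|^2\,ds\,dx = o(\la^{-2})$ and, via the hypothesis $g'\le -mg$, also $\int_0^\beta\int_0^\infty (-g'(s))|\omega_x|^2 = o(\la^{-2})$; hence $\|\omega\|_{\mathcal W_g}=o(\la^{-1})$, so the whole $E_3$-component is already controlled. \emph{Step 2 (recover $u_x$ on $(0,\beta)$).} From the component equations $i\la u - v = \la^{-2}f^1$ and $i\la\omega + \omega_s - v = \la^{-2}f^5$, solve for $\omega$ in terms of $u$ (the formula \eqref{p2-omegas} with $f^1,f^5$ replaced by $\la^{-2}f^1,\la^{-2}f^5$); this lets one express $b_0\int_0^\infty g(s)\omega_x\,ds$ in terms of $u_x$ plus small terms, so that on $(0,\beta)$ the flux $S_{\tilde b(\cdot)}(u,\omega)$ becomes $\widehat b(\cdot)u_x$ plus $o(1)$ data, where $\widehat b(x)=a-b(x)\int_0^\infty g(s)e^{-i\la s}ds$ has real part bounded below by $\widetilde{b_0}>0$. \emph{Step 3 (multiplier estimates).} This is the heart of the proof. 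Using cutoff functions adapted to the three interface points $\alpha,\beta,\gamma$ and the standard ``$qu_x$'' / ``$q y_x$'' piecewise multipliers (with $q$ smooth, supported across each interface), combined with the equations
\[
\la^2 u + \big(S_{\tilde b(\cdot)}(u,\omega)\big)_x - i\la c(\cdot)y = -f^2 - i\la f^1 - c(\cdot)f^3,\qquad
\la^2 y + y_{xx} + i\la c(\cdot)u = -f^4 + \cdots,
\]
I would propagate the smallness: first on $(\alpha,\gamma)$, where the coupling $c_0$ is active, estimate $\int|z|^2$ and $\int|y_x|^2$ in terms of $\int|u_t|^2=\int\la^2|u|^2$ and small terms (using $c_0\la u \approx$ something already controlled), then use the wave equation in $y$ to push the bound to $(0,\alpha)$ and $(\gamma,L)$; symmetrically, use the $u$-equation together with the already-estimated $\omega$ to bound $\int\widetilde b(\cdot)|u_x|^2$ and $\int|u_t|^2$ on all of $(0,L)$. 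The interface terms produced by integration by parts are handled because $u,y\in C^1$ near each interior point (as in the proof of Lemma~\ref{p2-ker}), so boundary contributions at $\alpha,\beta,\gamma$ telescope or are absorbed. Note that the hypothesis $a\ne1$ is \emph{not} needed for the estimate itself; it is only the reason one cannot do better than $t^{-1}$ (Theorem~\ref{exp-a=1} handling $a=1$), so the same multiplier scheme works but loses one power of $\la$ at the transmission conditions where the wave speeds $\sqrt a$ and $1$ disagree.

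\emph{Step 4 (conclusion).} Combining Steps 1--3 gives $\|U\|_{\HH}^2 = o(1)$, contradicting $\|U\|_{\HH}=1$. Therefore the resolvent bound $\|(i\la I-\AA)^{-1}\|=O(\la^2)$ holds, and Borichev--Tomilov yields $E(t)\le \frac{C}{t}\|U_0\|^2_{D(\AA)}$.

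The main obstacle I anticipate is Step~3: bookkeeping the transmission terms at the three interfaces $\alpha<\beta<\gamma$ where $b$ and $c$ jump, in particular matching the flux $S_{\tilde b(\cdot)}(u,\omega)$ (which involves $\widetilde{b_0}$ on $(0,\beta)$ and $a$ on $(\beta,L)$) across $x=\beta$ while simultaneously tracking the $y$-equation across $x=\alpha$ and $x=\gamma$. Choosing the cutoff/multiplier functions so that every interface term is either of lower order in $\la$ or cancels against a neighbouring region — and getting the power of $\la$ exactly right (so that the $O(\la^2)$ bound, and no worse, emerges) — is the delicate part; the memory variable $\omega$ supported only on $(0,\beta)$ adds an extra layer since its contribution must be estimated uniformly in $s$ using the exponential bound \eqref{2ast}.
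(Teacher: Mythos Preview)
Your overall architecture is correct and matches the paper: Borichev--Tomilov, contradiction argument with $\ell=2$, dissipation identity to kill the history term, then multiplier estimates to propagate smallness. Steps~1, 2 and 4 are essentially what the paper does (the paper obtains $\int_0^\beta|u_x|^2=o(|\la|^{-2})$ by a direct multiplier on the $\omega$-equation rather than via your $\widehat b$ route, but that is a minor variation).

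The genuine gap is in Step~3. The ``standard $qu_x$ / $qy_x$ piecewise multipliers'' you describe will not by themselves transfer the smallness of $u_x$ on $(0,\beta)$ to $y_x$ and $z$ with the correct power of $\la$; the coupling term $c(\cdot)i\la u$ in the $y$-equation is only $o(1)$, not $o(|\la|^{-1})$, so a naive energy multiplier loses a full power of $\la$ and gives at best $\ell=4$. The paper's key device (its Lemma~\ref{estimation-yx}) is a \emph{cross}-multiplier: one multiplies the $u$-equation by $i\la^{-1}h_2\,\overline{y_{xx}}$ and the $y$-equation by $i\la^{-1}h_2\,\big(\overline{S_{\tilde b(\cdot)}(u,\omega)}\big)_x$, where $h_2$ is a cutoff supported in $(\alpha,\beta-\varepsilon)$. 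After integration by parts the second-order cross terms $\la^{-1}\int h_2(S)_x\overline{y_{xx}}$ cancel between the two identities, and what survives is
\[
c_0\int h_2|y_x|^2\,dx \;=\; \Re\Big\{i\la(a-1)\int h_2\,u_x\,\overline{y_x}\,dx\Big\}+o(1)
\;\le\; |a-1|\,|\la|\int_\alpha^{\beta-\varepsilon}|u_x|\,|y_x|\,dx + o(1).
\]
With $\ell=2$ one has $\|u_x\|_{L^2(0,\beta)}=o(|\la|^{-1})$, so the right-hand side is $o(1)$ and hence $\int|y_x|^2=o(1)$ on the overlap; $\int|z|^2$ follows by the same mechanism. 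Only after this does one use $xh$-type multipliers (your ``standard'' ones) to push the estimate from the overlap region out to $(0,L)$. Your sketch neither identifies this cross-multiplier nor explains how the factor $(a-1)$ appears; without it the accounting you propose (``$c_0\la u\approx$ something already controlled'') does not close at $\ell=2$.
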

\noindent Since $i\R\subset \rho(\AA)$ (see Section \ref{p2-sec3}), according to Theorem \ref{Caract} and Theorem \ref{bt}, to proof  Theorem \ref{exp-a=1} and Theorem \ref{pol-an1}, we still need to prove the following condition  
\begin{equation}\tag{${\rm H_1}$}\label{H1-cond}
\sup_{\la\in \R}\left\|\left(i\la I-\AA\right)^{-1}\right\|_{\mathcal{L}(\HH)}=O\left(\abs{\la}^{\ell}\right), \quad \text{with} \quad \ell=0 \ \ \text{or}\ \ \ell=2.
\end{equation}
We will prove condition \eqref{H1-cond} by a contradiction argument. For this purpose,
suppose that \eqref{H1-cond} is false, then there exists $\left\{(\la^n,U^n:=(u^n,v^n,y^n,z^n,\omega^n(\cdot,s))^{\top})\right\}_{n\geq1}\subset \R^{\ast} \times D(\mathcal{A})$ with
\begin{equation}\label{p2-contra-pol2}
|\la^n|\to\infty \quad \text{and}\quad \|U^n\|_{\mathcal{H}}=\|(u^n,v^n,y^n,z^n,\omega^n (\cdot,s))^{\top}\|_{\HH}=1,
\end{equation}
such that  
\begin{equation}\label{p2-eq0ps}
(\la^n )^{\ell} (i\la^n I-\AA )U^n =F^n:=(f^{1,n},f^{2,n},f^{3,n},f^{4,n},f^{5,n}(\cdot,s))^{\top}  \to 0  \quad \text{in}\quad \HH.
\end{equation} 
For simplicity, we drop the index $n$. Equivalently, from \eqref{p2-eq0ps}, we have 
\begin{eqnarray}
i\la u	-v&=&\la^{-\ell}f^1 \label{p2-f1ps}\to 0 \quad\qquad \text{in}\quad H^{1}_0 (0,L),\\
i\la v	-\left(\Sbt\right)_x +c(\cdot)z &=&\la^{-\ell}f^2  \to 0 \quad\qquad \text{in}\quad L^2 (0,L),\label{p2-f2ps}\\
i\la y	-z&=&\la^{-\ell}f^3 \to 0 \quad\qquad \text{in}\quad H^{1}_0 (0,L), \label{p2-f3ps}\\
i\la z	-y_{xx}-c(\cdot)v&=&\la^{-\ell}f^4 \to 0 \quad\qquad \text{in}\quad L^2 (0,L), \label{p2-f4ps}\\
i\la \omega (\cdot,s)+	\omega_s (\cdot,s)-v&=&\la^{-\ell}f^5(\cdot,s) \to 0 \quad \text{in}\quad  \mathcal{W}_g . \label{p2-f5ps}
\end{eqnarray}
Here we will check the condition \eqref{H1-cond} by finding a contradiction with \eqref{p2-contra-pol2} by showing $\left\|U\right\|_{\HH}=o(1)$.  For clarity, we divide the proof into several Lemmas.
\begin{lem}\label{p2-1stlemps}
	{\rm Under the hypotheses \eqref{paper2-H}, the solution $U=(u,v,y,z,\omega(\cdot,s))^{\top}\in D(\AA)$ of system \eqref{p2-f1ps}-\eqref{p2-f5ps} satisfies the following estimations 
	\begin{equation}	
		-	\intdnb\int_{0}^{\infty}g^{\prime}(s)|\omega_x (\cdot,s)|^2 ds dx = o\left(|\la|^{-\ell}\right) \ \ \text{and}\ \
		\intdnb\int_{0}^{\infty}g(s)|\omega_x (\cdot,s)|^2 ds dx = o\left(|\la|^{-\ell}\right),\label{p2-4.8}\end{equation}
		\begin{equation}\label{p2-4.16}
		\intdnb |u_x |^2 dx =o(\abs{\la}^{-\ell})  \ \ \text{and}\ \ \intdnb \left|\Sbtz\right|^2 dx =o(\abs{\la}^{-\ell}).\end{equation}	}
\end{lem}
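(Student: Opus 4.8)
The plan is to read off \eqref{p2-4.8} from the dissipation relation established in Proposition~\ref{p2-mdissip}, and then to bootstrap to \eqref{p2-4.16} by solving the transport equation \eqref{p2-f5ps} explicitly in the history variable $s$.

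\textbf{Step 1 (the estimates \eqref{p2-4.8}).} Take the $\HH$-inner product of \eqref{p2-eq0ps} with $U$, divide by $\la^{\ell}$, and take real parts; since $\Re\big(i\la\|U\|_{\HH}^2\big)=0$, the identity \eqref{p2-reauu} gives
\[
-\frac{b_0}{2}\intdnb\int_{0}^{\infty}g^{\prime}(s)|\omega_x(\cdot,s)|^2\,ds\,dx=-\Re(\AA U,U)_{\HH}=\la^{-\ell}\Re(F,U)_{\HH}.
\]
As $\|F\|_{\HH}=o(1)$ and $\|U\|_{\HH}=1$, the right-hand side is $o(|\la|^{-\ell})$, which is the first estimate; the second follows at once by integrating the pointwise inequality $g(s)\le-m^{-1}g^{\prime}(s)$ of \eqref{paper2-H} against $|\omega_x(\cdot,s)|^2$.

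\textbf{Step 2 (representation of $\omega_x$ and the bound on $u_x$).} Differentiating \eqref{p2-f5ps} with respect to $x$ yields a first-order linear ODE in $s$ for $\omega_x(\cdot,s)$ with $\omega_x(\cdot,0)=0$; solving it and inserting $v_x=i\la u_x-\la^{-\ell}f^1_x$ from \eqref{p2-f1ps} gives
\[
\omega_x(\cdot,s)=u_x\,(1-e^{-i\la s})-\frac{\la^{-\ell}}{i\la}f^1_x\,(1-e^{-i\la s})+\la^{-\ell}\int_{0}^{s}e^{i\la(\xi-s)}f^5_x(\cdot,\xi)\,d\xi .
\]
I would multiply this identity by $g(s)\overline{u_x}$ and integrate over $(0,\beta)\times(0,\infty)$. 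Because $\int_{0}^{\infty}g(s)(1-e^{-i\la s})\,ds=\widetilde g-\widehat g(\la)$, where $\widehat g(\la):=\int_{0}^{\infty}g(s)e^{-i\la s}\,ds\to0$ as $|\la|\to\infty$ by the Riemann--Lebesgue lemma, the left-hand side equals $(\widetilde g-\widehat g(\la))\intdnb|u_x|^2\,dx$ with $|\widetilde g-\widehat g(\la)|\ge\widetilde g/2$ for $|\la|$ large. Each term on the right is bounded, via the Cauchy--Schwarz inequality in the $g$-weighted inner product, by a small factor times $\|u_x\|_{L^2(0,\beta)}$: the first uses $\big(\intdnb\int_{0}^{\infty}g|\omega_x|^2\,ds\,dx\big)^{1/2}=o(|\la|^{-\ell/2})$ from Step~1; the second uses $\|f^1_x\|_{L^2}=o(1)$ and the extra factor $|\la|^{-1}$; the third uses the weighted Hardy-type inequality $\int_{0}^{\infty}g(s)\big|\int_{0}^{s}e^{i\la(\xi-s)}h(\cdot,\xi)\,d\xi\big|^2\,ds\le\frac{4}{m^2}\int_{0}^{\infty}g(s)|h(\cdot,s)|^2\,ds$ with $h=f^5_x$ together with $\|f^5\|_{\mathcal{W}_g}=o(1)$. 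Collecting the terms gives $\frac{\widetilde g}{2}\|u_x\|_{L^2(0,\beta)}^2\le o(|\la|^{-\ell/2})\,\|u_x\|_{L^2(0,\beta)}$, hence $\intdnb|u_x|^2\,dx=o(|\la|^{-\ell})$. Since $\Sbtz=\widetilde{b_0}u_x+b_0\int_{0}^{\infty}g(s)\omega_x(\cdot,s)\,ds$ and $\intdnb\big|\int_{0}^{\infty}g(s)\omega_x(\cdot,s)\,ds\big|^2\,dx\le\widetilde g\intdnb\int_{0}^{\infty}g|\omega_x|^2\,ds\,dx=o(|\la|^{-\ell})$ by Step~1, the second estimate in \eqref{p2-4.16} follows.

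\textbf{Main obstacle.} Conceptually the crux is that $u_x$ is recovered from $\omega_x$ because $\int_{0}^{\infty}g(s)(1-e^{-i\la s})\,ds$ stays bounded away from $0$; technically the crux is the weighted Hardy-type inequality above, for which the crude bound $\big|\int_{0}^{s}e^{i\la(\xi-s)}h\,d\xi\big|\le\int_{0}^{s}|h|$ is useless since $g$ need not be bounded below. I would prove it as in the well-posedness estimate \eqref{p2-winl2-1}--\eqref{p2-2.27}: setting $P(\cdot,s):=\int_{0}^{s}e^{i\la(\xi-s)}h(\cdot,\xi)\,d\xi$ one has $P_s+i\la P=h$ and $P(\cdot,0)=0$, so $\frac{d}{ds}|P|^2=2\Re(h\overline P)$; then on a window $(\eps_1,\eps_2)$ use $g\le-m^{-1}g^{\prime}$, integrate by parts in $s$, apply Young's inequality, and let $\eps_1\to0^+$, $\eps_2\to\infty$ with the help of $P(\cdot,0)=0$ and \eqref{2ast}. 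The only remaining bookkeeping is to treat $\ell=0$ and $\ell=2$ simultaneously, which is harmless because every error term already carries a factor $|\la|^{-\ell}$ (or $|\la|^{-\ell}$ times a negative power of $|\la|$).
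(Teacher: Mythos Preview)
Your argument is correct, but Step~2 follows a genuinely different route from the paper. The paper never solves the transport equation; it multiplies the differential identity $i\la\omega_x+\omega_{xs}-i\la u_x=\la^{-\ell}(f^5_x-f^1_x)$ directly by $\la^{-1}g(s)\overline{u_x}$, integrates, and takes the \emph{imaginary part}. This kills the phase issue entirely: the $-i\la u_x$ term produces exactly $\widetilde g\intdnb|u_x|^2\,dx$ on the left (no Riemann--Lebesgue needed), while the $\omega_{xs}$ term is integrated by parts in $s$ to become $\la^{-1}\intdnb\intdm(-g')\omega_x\overline{u_x}\,ds\,dx$, which is absorbed by the $-g'$ estimate of Step~1. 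The $f^5$ term then carries only the raw $\|f^5\|_{\mathcal W_g}$ and an extra $|\la|^{-1}$, so no Hardy-type inequality is required.

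Your version trades that integration-by-parts trick for two external ingredients: the Riemann--Lebesgue lemma (to make $|\widetilde g-\widehat g(\la)|\ge\widetilde g/2$) and the weighted Hardy inequality $\int_0^{\infty}g|P|^2\,ds\le\frac{4}{m^2}\int_0^{\infty}g|h|^2\,ds$ for $P_s+i\la P=h$, $P(\cdot,0)=0$. Both are valid---your sketch of the Hardy inequality via the $(\eps_1,\eps_2)$ window and Young's inequality, mirroring \eqref{p2-winl2-1}--\eqref{p2-2.27}, is fine and the boundary term $g(\eps_1)|P(\cdot,\eps_1)|^2$ indeed vanishes since $P(\cdot,0)=0$. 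The trade-off: the paper's route is shorter and uses only what was already extracted from the dissipation, while yours makes the mechanism (that $\omega_x$ is essentially $u_x(1-e^{-i\la s})$ up to forcing) more transparent and yields a reusable Hardy estimate. Either way the final bound on $\Sbtz$ is obtained identically.
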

\begin{proof}
	First, taking the inner product of \eqref{p2-eq0ps} with $U$ in $\HH$ and using \eqref{p2-reauu}, we get
	\begin{equation}\label{p2-4.12}
	\displaystyle	 -\frac{b_0}{2}\intdnb \int_{0}^{\infty}g^{\prime}(s)|\omega_x (\cdot,s)|^2 dsdx =-\Re \left(\AA U,U\right)_{\HH}=\la^{-\ell}\Re  \left(F,U\right)_{\HH} \leq |\la|^{-\ell} \|F\|_{\HH}\|U\|_{\HH} .
	\end{equation}Thus, from \eqref{p2-4.12}, \eqref{paper2-H}  and the fact that $\|F\|_{\HH}=o(1)$ and $\|U\|_{\HH}=1$, we obtain the first estimation in \eqref{p2-4.8}. From hypotheses \eqref{paper2-H}, we obtain 
	\begin{equation}\label{p2-4.13}
	\intdnb\int_{0}^{\infty}g(s)|\omega_x (\cdot,s)|^2 ds dx\leq 	-\frac{1}{m}\intdnb\int_{0}^{\infty}g^{\prime}(s)|\omega_x (\cdot,s)|^2 ds dx.
	\end{equation}Then, from the first estimation in \eqref{p2-4.8} and \eqref{p2-4.13}, we obtain the second estimation in \eqref{p2-4.8}.
Next, inserting \eqref{p2-f1ps} in \eqref{p2-f5ps}, then deriving the resulting equation  with respect to $x$, we get 
	\begin{equation}\label{p2-4.18}
	i\la \omega_x (\cdot,s)+\omega_{sx}(\cdot,s)-i\la u_x = \la^{-\ell}f^5_x (\cdot,s)-\la^{-\ell}f^1_x . 
	\end{equation}Multiplying \eqref{p2-4.18} by $\la^{-1}g(s)\overline{u_x }$, integrating over $(0 ,\beta )\times (0,\infty)$, then taking the imaginary part, we obtain 
	\begin{equation*}
	\begin{array}{lll}
	\displaystyle	 \intdnb \intdm g(s)|u_x |^2 dsdx &=&	\displaystyle\Im\left\{i\intdnb\intdm g(s)\omega_x (\cdot,s)\overline{u_x }dsdx \right\} +\Im\left\{\la^{-1}\intdnb\intdm g(s)\omega_{xs}(\cdot,s)\overline{u_x }dsdx \right\}\vspace{0.25cm}\\&&	\displaystyle\,-\Im\left\{\la^{-(\ell+1)}\intdnb\intdm g(s)f^5_x (\cdot,s)\overline{u_x }dsdx \right\}+\Im\left\{\la^{-(\ell+1)}\intdnb\intdm g(s)f^1_x \overline{u_x }dsdx \right\}.
	\end{array}
	\end{equation*}Using integration by parts with respect to $s$ in the above equation, then using hypotheses \eqref{paper2-H} and the fact that $\omega(\cdot,0)=0$, we get 
	\begin{equation}\label{p2-4.19}
	\begin{array}{lll}
	\displaystyle	 \widetilde{g}\intdnb|u_x |^2 dx &=&	\displaystyle\Im\left\{i\intdnb\intdm g(s)\omega_x (\cdot,s)\overline{u_x }dsdx \right\} +\Im\left\{\la^{-1}\intdnb\intdm -g^{\prime}(s)\omega_{x}(\cdot,s)\overline{u_x }dsdx \right\}\vspace{0.25cm}\\&&	\displaystyle\,-\Im\left\{\la^{-(\ell+1)}\intdnb\intdm g(s)f^5_x (\cdot,s)\overline{u_x }dsdx \right\}+\Im\left\{\widetilde{g}\la^{-(\ell+1)}\intdnb f^1_x \overline{u_x }dx \right\}.
	\end{array}
	\end{equation}Using Young's inequality and Cauchy-Schwarz inequality in  \eqref{p2-4.19} with the help of hypotheses \eqref{paper2-H}, we obtain
	\begin{equation*}
	\begin{array}{lll}
	\displaystyle\widetilde{g}\intdnb|u_x |^2 dx &\leq&\displaystyle \frac{\widetilde{g}}{2}\intdnb|u_x|^2 dx+\frac{1}{2}\intdnb\intdm g(s)|\omega_x (\cdot,s)|^2 dsdx \vspace{0.25cm}\\ 
	&& \displaystyle +\, |\la|^{-1}\sqrt{g_0 }\left( \intdnb\intdm -g^{\prime}(s)|\omega_x (\cdot,s)|^2 dsdx\right)^{\frac{1}{2}}\left(\intdnb|u_x |^2 dx \right)^{\frac{1}{2}} \vspace{0.25cm}\\ 
	&& \displaystyle  +\, |\la|^{-(\ell+1)} \sqrt{\widetilde{g}}\left(\intdnb\intdm g(s)|f^5_x  (\cdot,s)|^2 dsdx\right)^{\frac{1}{2}}\left(\intdnb|u_x|^2 dx \right)^{\frac{1}{2}}\\
	 &&\displaystyle +\,\widetilde{g}|\la|^{-(\ell+1)}\left(\intdnb |f^1_x |^2dx \right)^{\frac{1}{2}}\left(\intdnb |u_x |^2dx \right)^{\frac{1}{2}}.
	\end{array}
	\end{equation*}From the above inequality, \eqref{p2-4.8} and the fact that $u_x $ is uniformly bounded in $L^2 (0,L )$ and $f^1_x \to 0 \ \ \text{in}\ \ L^2 (0,L)$, $f^5(\cdot,s)\to 0 \ \ \text{in}\ \ \mathcal{W}_g$, we obtain the first estimation in \eqref{p2-4.16}. Now, by using Cauchy-Schwarz inequality, we obtain 
	\begin{equation*}
	\begin{array}{lll}
	\displaystyle \intdnb \left|S_{\widetilde{b_0}}(u,\omega)\right|^2 dx= 	\intdnb \left|\widetilde{b_0 }u_x +b_0 \int_{0}^{\infty}g(s)\omega_x (\cdot,s)ds\right|^2 dx\leq\displaystyle 	2(\widetilde{b_0})^2\intdnb  |u_x|^2  +2b_0^2 \intdnb \left(\int_{0}^{\infty}g(s)|\omega_x (\cdot,s)|ds\right)^2 dx\vspace{0.25cm}\\  \hspace{9.1cm}\leq \displaystyle 2(\widetilde{b_0})^2\intdnb  |u_x|^2  +2b_0^2 \widetilde{g} \intdnb \int_{0}^{\infty}g(s)|\omega_x (\cdot,s)|^2 dsdx.
	\end{array}
	\end{equation*}
	Finally, from the above inequality,  \eqref{p2-4.8} and the first estimation in \eqref{p2-4.16}, we obtain the second estimation in \eqref{p2-4.16}. The proof is thus complete.
\end{proof}


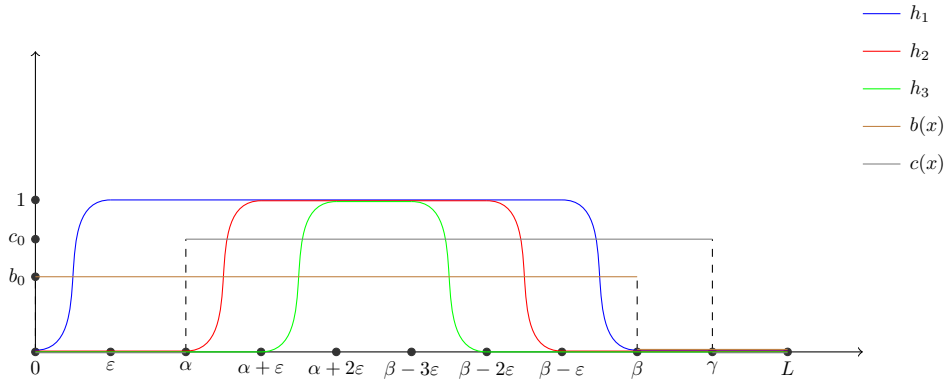
\begin{figure}[h]
\begin{center}
	\begin{tikzpicture}
\draw[->](1,0)--(12,0);
\draw[->](1,0)--(1,4);
\draw[dashed](1,0)--(1,1);
\draw[dashed](9,0)--(9,1);
\draw[dashed](3,0)--(3,1.5);
\draw[dashed](10,0)--(10,1.5);

\node[black,below] at (1,0){\scalebox{0.75}{$0$}};
\node at (1,0) [circle, scale=0.3, draw=black!80,fill=black!80] {};
\node[black,below] at (2,0){\scalebox{0.75}{$\varepsilon$}};
\node at (2,0) [circle, scale=0.3, draw=black!80,fill=black!80] {};
\node[black,below] at (3,0){\scalebox{0.75}{$\alpha $}};
\node at (3,0) [circle, scale=0.3, draw=black!80,fill=black!80] {};

\node[black,below] at (4,0){\scalebox{0.75}{$\alpha +\varepsilon$}};
\node at (4,0) [circle, scale=0.3, draw=black!80,fill=black!80] {};

\node[black,below] at (5,0){\scalebox{0.75}{$\alpha +2\varepsilon$}};
\node at (5,0) [circle, scale=0.3, draw=black!80,fill=black!80] {};

\node[black,below] at (6,0){\scalebox{0.75}{$\beta-3\varepsilon$}};
\node at (6,0) [circle, scale=0.3, draw=black!80,fill=black!80] {};

\node[black,below] at (7,0){\scalebox{0.75}{$\beta-2\varepsilon$}};
\node at (7,0) [circle, scale=0.3, draw=black!80,fill=black!80] {};

\node[black,below] at (8,0){\scalebox{0.75}{$\beta-\varepsilon$}};
\node at (8,0) [circle, scale=0.3, draw=black!80,fill=black!80] {};

\node[black,below] at (9,0){\scalebox{0.75}{$\beta$}};
\node at (9,0) [circle, scale=0.3, draw=black!80,fill=black!80] {};

\node[black,below] at (10,0){\scalebox{0.75}{$\gamma$}};
\node at (10,0) [circle, scale=0.3, draw=black!80,fill=black!80] {};

\node[black,below] at (11,0){\scalebox{0.75}{$L$}};
\node at (11,0) [circle, scale=0.3, draw=black!80,fill=black!80] {};


\node at (1,2.022) [circle, scale=0.3, draw=black!80,fill=black!80] {};
\node at (1,1) [circle, scale=0.3, draw=black!80,fill=black!80] {};
\node at (1,1.5) [circle, scale=0.3, draw=black!80,fill=black!80] {};

\node[black,left] at (1,2.022){\scalebox{0.75}{$1$}};
\node[black,left] at (1,1){\scalebox{0.75}{$b_0$}};
\node[black,left] at (1,1.5){\scalebox{0.75}{$c_0$}};

\node[black,right] at (12.5,4.5){\scalebox{0.75}{$h_1$}};
\node[black,right] at (12.5,4){\scalebox{0.75}{$h_2$}};
\node[black,right] at (12.5,3.5){\scalebox{0.75}{$h_3$}};
\node[black,right] at (12.5,3){\scalebox{0.75}{$b(x)$}};
\node[black,right] at (12.5,2.5){\scalebox{0.75}{$c(x)$}};

\draw [blue] (1,0.022) to[out=0.40,in=180] (2,2.022) ;
\draw[-,blue](2,2.022)--(8,2.022);
\draw [blue] (8,2.022) to[out=0.40,in=180] (9,0.022) ;
\draw[-,blue](9,0.022)--(11,0.022);
\draw[-,red](1,0.011)--(3,0.011);
\draw [red] (3,0.011) to[out=0.40,in=180] (4,2.011) ;
\draw[-,red](4,2.011)--(7,2.011);
\draw [red] (7,2.011) to[out=0.40,in=180] (8,0.011) ;
\draw[-,red](8,0.011)--(11,0.011);
\draw[-,green](1,0)--(4,0);
\draw [green] (4,0) to[out=0.40,in=180] (5,2) ;
\draw[-,green](5,2)--(6,2);
\draw [green] (6,2) to[out=0.40,in=180] (7,0) ;
\draw[-,green](7,0)--(11,0);

\draw[-,brown](1,1)--(9,1);
\draw[-,brown](9,0.033)--(11,0.033);

\draw[-,gray](1,-0.011)--(3,-0.011);
\draw[-,gray](3,1.5)--(10,1.5);
\draw[-,gray](10,-0.011)--(11,-0.011);

\draw[-,blue](12,4.5)--(12.5,4.5);
\draw[-,red](12,4)--(12.5,4);
\draw[-,green](12,3.5)--(12.5,3.5);
\draw[-,brown](12,3)--(12.5,3);
\draw[-,gray](12,2.5)--(12.5,2.5);
\end{tikzpicture}\end{center}
\caption{Geometric description of the functions $h_1$, $h_2$ and $h_3$.}\label{p2-Fig1}
\end{figure}
\begin{lem}\label{estimation-v}{\rm
Let $0<\varepsilon<\min\left(\alpha,\frac{\beta-\alpha}{5}\right)$. Under the hypotheses \eqref{paper2-H},  the solution $U=(u,v,y,z,\omega(\cdot,s))^{\top}\in D(\AA)$ of  \eqref{p2-f1ps}-\eqref{p2-f5ps} satisfies the following estimation}
\begin{equation}\label{estimation-v1}
\int_{\varepsilon}^{\beta-\varepsilon}\abs{v}^2dx=o\left(\abs{\la}^{-\frac{\ell}{2}}\right).
\end{equation}
\end{lem}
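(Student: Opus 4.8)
The plan is to test the second equation \eqref{p2-f2ps} against $h_1\overline{u}$, where $h_1\in C^1([0,L])$ is the cut-off function of Figure \ref{p2-Fig1}: $0\le h_1\le 1$, $h_1\equiv 1$ on $[\varepsilon,\beta-\varepsilon]$, and $\operatorname{supp}(h_1)\subseteq[0,\beta]$ (in particular $h_1(0)=0$ and $h_1\equiv0$ on $[\beta,L]$). Multiplying \eqref{p2-f2ps} by $h_1\overline u$, integrating over $(0,L)$, and using the complex conjugate of \eqref{p2-f1ps}, namely $i\la\overline u=-\overline v-\la^{-\ell}\overline{f^1}$, turns the term $i\la\int_0^L h_1 v\overline u\,dx$ into $-\int_0^L h_1|v|^2\,dx-\la^{-\ell}\int_0^L h_1 v\overline{f^1}\,dx$. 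Integrating by parts in the term containing $(\Sbt)_x$ (which is licit since $U\in D(\AA)$ gives $\Sbt\in H^1(0,L)$, and the boundary terms vanish because $u(0)=u(L)=0$) and collecting everything, one obtains
\[
\int_0^L h_1|v|^2\,dx=\int_0^L h_1\,\Sbt\,\overline{u_x}\,dx+\int_0^L h_1'\,\Sbt\,\overline u\,dx+\int_0^L h_1 c(\cdot)z\,\overline u\,dx-\la^{-\ell}\int_0^L h_1\bigl(f^2\overline u+v\overline{f^1}\bigr)\,dx .
\]
All terms on the right-hand side are supported in $[0,\beta]$ (since $h_1$, $h_1'$ are), so on their support $\Sbt=\Sbtz$ and the estimates of Lemma \ref{p2-1stlemps} are available.

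Next I would estimate each term. From $u(0)=0$ and the first estimate in \eqref{p2-4.16} one gets the pointwise bound $\|u\|_{L^\infty(0,\beta)}\le\sqrt{\beta}\,\bigl(\int_0^\beta|u_x|^2\,dx\bigr)^{1/2}=o(|\la|^{-\ell/2})$, while $\|u\|_{L^2(0,L)}$, $\|v\|_{L^2(0,L)}$, $\|z\|_{L^2(0,L)}$ are $O(1)$ because $\|U\|_{\HH}=1$, and $\|f^1\|_{L^2}=\|f^2\|_{L^2}=o(1)$ because $\|F\|_{\HH}=o(1)$. By Cauchy--Schwarz, the last (bracketed) term is $o(|\la|^{-\ell})$; the first term is $o(|\la|^{-\ell})$ as well, pairing $\|\Sbtz\|_{L^2(0,\beta)}=o(|\la|^{-\ell/2})$ with $\|u_x\|_{L^2(0,\beta)}=o(|\la|^{-\ell/2})$; and the second term is $o(|\la|^{-\ell/2})$, pairing $\|\Sbtz\|_{L^2(0,\beta)}=o(|\la|^{-\ell/2})$ with the bounded quantity $\|u\|_{L^2(0,\beta)}$. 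For the coupling term, since $c$ is supported in $(\alpha,\gamma)$ the integral $\int_0^L h_1 c(\cdot)z\overline u\,dx$ lives on $(\alpha,\beta)\subset(0,\beta)$ and is bounded by $\|c\|_\infty\|h_1\|_\infty\|z\|_{L^2(0,L)}\|u\|_{L^\infty(0,\beta)}=O(1)\cdot o(|\la|^{-\ell/2})=o(|\la|^{-\ell/2})$. Hence $\int_0^L h_1|v|^2\,dx=o(|\la|^{-\ell/2})$, and since $h_1\ge0$ with $h_1\equiv1$ on $[\varepsilon,\beta-\varepsilon]$, this gives $\int_{\varepsilon}^{\beta-\varepsilon}|v|^2\,dx\le\int_0^L h_1|v|^2\,dx=o(|\la|^{-\ell/2})$, which is \eqref{estimation-v1}.

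The one genuinely delicate point — and the reason the claimed rate is only $o(|\la|^{-\ell/2})$ rather than the $o(|\la|^{-\ell})$ enjoyed by $u_x$, $\omega_x$, $\Sbtz$ on $(0,\beta)$ — is the coupling term $\int_0^L h_1 c(\cdot)z\overline u\,dx$: at this stage no smallness or decay is available for $z$ (only $\|z\|_{L^2}\le\|U\|_{\HH}=1$), so the best one can extract is one factor $\|u\|_{L^\infty(0,\beta)}=o(|\la|^{-\ell/2})$. Everything else is routine Cauchy--Schwarz bookkeeping built on Lemma \ref{p2-1stlemps}, the normalisation $\|U\|_{\HH}=1$, the smallness $\|F\|_{\HH}=o(1)$, and the localisation provided by the supports of $b$, $c$ and $h_1$.
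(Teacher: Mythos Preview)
Your proof is correct, but it takes a different route from the paper's. The paper multiplies \eqref{p2-f2ps} by $-h_1\int_0^\infty g(s)\overline{\omega}(\cdot,s)\,ds$ and then invokes \eqref{p2-f5ps} (in the form $-i\la\overline{\omega}=-\overline{\omega_s}+\overline{v}+\la^{-\ell}\overline{f^5}$) to produce the principal term $\widetilde{g}\int_0^L h_1|v|^2\,dx$; the remaining terms are controlled by the $\omega_x$ and $\Sbtz$ estimates of Lemma~\ref{p2-1stlemps} together with Poincar\'e. You instead test \eqref{p2-f2ps} against $h_1\overline{u}$ and use \eqref{p2-f1ps} to generate $\int_0^L h_1|v|^2\,dx$ directly, then exploit $u(0)=0$ to get $\|u\|_{L^\infty(0,\beta)}\lesssim\|u_x\|_{L^2(0,\beta)}=o(|\la|^{-\ell/2})$ for the coupling term. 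Your argument is arguably more elementary: it avoids the memory equation \eqref{p2-f5ps} and the integration by parts in $s$, and the bottleneck term $\int h_1 c(\cdot)z\,\overline{u}\,dx$ is transparently the one limiting the rate to $o(|\la|^{-\ell/2})$. The paper's choice of multiplier is perhaps more natural from the viewpoint of the dissipation mechanism (it is the history variable $\omega$ that carries the damping information), and yields the same rate via the analogous bottleneck $\int h_1 v\int(-g'(s))\overline{\omega}\,ds\,dx$. Two minor cosmetic remarks: your displayed identity should in principle carry a real part on the right-hand side (the left-hand side is real), though this is immaterial once you pass to absolute values; and your bound on the second term $\int h_1'\,\Sbtz\,\overline{u}\,dx$ can in fact be sharpened to $o(|\la|^{-\ell})$ using $\|u\|_{L^2(0,\beta)}\lesssim\|u_x\|_{L^2(0,\beta)}$, but this does not affect the final estimate.
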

\begin{proof}
First, we fix a cut-off function $h_1\in C^{1}\left([0,L]\right)$ (see Figure \ref{p2-Fig1}) such that $0\leq h_1 (x)\leq 1$, for all $x\in[0,L]$ and 
\begin{equation}\tag{$h_1$}\label{p2-h1}
	h_1 (x)= 	\left \{ \begin{array}{lll}
	1 &\text{if} \quad \,\,  x \in [\varepsilon ,\beta -\varepsilon],&\vspace{0.1cm}\\
		0 &\text{if } \quad x \in \{0\}\cup [\beta ,L],&
	\end{array}	\right. \qquad\qquad
	\end{equation}
and set
\begin{equation*}
\max_{x\in [0,L]}|h'_1(x)|=M_{h_1'}.
\end{equation*}
Multiplying 	\eqref{p2-f2ps} by $\displaystyle{-h_1\int_0^{\infty}g(s)\overline{\omega}(\cdot,s)}ds$ and integrate over $(0,L)$, using integration by parts with the help of the properties of \ref{p2-h1} (i.e. $h_1 (0)=h_1 (L)=0$), then using the definition of \ref{p2-c}, we obtain 
\begin{equation}\label{estimation-v2}
\begin{array}{lll}
\displaystyle{-i\la\int_0^Lh_1v\int_0^{\infty}g(s)\overline{\omega}(\cdot,s)dsdx}=\displaystyle{\int_0^L\Sbt\left(h_1\int_0^{\infty}g(s)\overline{\omega}(\cdot,s)\right)_xdsdx}\vspace{0.25cm}\\\hspace{4cm}
\displaystyle+\, c_0\int_{\alpha}^{\beta} h_1z\int_0^{\infty}g(s)\overline{\omega}(\cdot,s)dsdx-\la^{-\ell}\int_0^Lh_1 f^2\int_0^{\infty}g(s)\overline{\omega}(\cdot,s)dsdx. 
\end{array}
\end{equation}
From \eqref{p2-f5ps}, we deduce that 
\begin{equation*}
-i\la \overline{\omega}(\cdot,s)=-\overline{\omega_s}(\cdot,s)+\overline{v}+\la^{-\ell}\overline{f^5}(\cdot,s).
\end{equation*}
Inserting the above equation in the left hand side of \eqref{estimation-v2}, then using the definition of \ref{p2-c} and \ref{p2-h1}, we get 
\begin{equation}\label{estimation-v3}
\begin{array}{lll}
\displaystyle{\widetilde{g}\int_0^Lh_1\abs{v}^2dx}=\displaystyle{\int_0^Lh_1v\int_0^{\infty}g(s)\overline{\omega_s}(\cdot,s)dsdx-\la^{-\ell}\int_0^Lh_1v\int_0^{\infty}g(s)\overline{f^5}(\cdot,s)dsdx}\vspace{0.25cm}\\\hspace{2.75cm}
\displaystyle{+\,\int_0^L\Sbt h_1'\int_0^{\infty}g(s)\overline{\omega}(\cdot,s)dsdx+\int_0^L\Sbt h_1\int_0^{\infty}g(s)\overline{\omega_x}(\cdot,s)dsdx}\vspace{0.25cm}\\\hspace{3cm}
\displaystyle{+\,c_0\int_{\alpha}^{\beta} h_1z\int_0^{\infty}g(s)\overline{\omega}(\cdot,s)dsdx-\la^{-\ell}\int_0^Lh_1 f_2\int_0^{\infty}g(s)\overline{\omega}(\cdot,s)dsdx}.
\end{array}
\end{equation}
Using  integration by parts with respect to $s$ with the help of  $\omega(\cdot,0)=0$ and hypotheses \eqref{paper2-H}, Cauchy-Schwarz inequality, Poincar\'e inequality,  $v$ is uniformly bounded in $L^2(0,L)$   and  \eqref{p2-4.16}, we get 
\begin{equation}\label{estimation-v4}
\begin{array}{lll}
\displaystyle{\left|\int_0^Lh_1v\int_0^{\infty}g(s)\overline{\omega_s}(\cdot,s)dsdx\right|}=\displaystyle{\left|\int_0^Lh_1v\int_0^{\infty}-g'(s)\overline{\omega}(\cdot,s)dsdx\right|}\vspace{0.25cm}\\\hspace{4cm}
\leq\displaystyle{\sqrt{g_0}\left(\intdnb\abs{v}^2dx\right)^{\frac{1}{2}}\left(\intdnb\int_0^{\infty}-g'(s)\abs{\omega(\cdot,s)}^2dsdx\right)^{\frac{1}{2}}}\vspace{0.25cm}\\\hspace{4cm}
\lesssim\displaystyle{\sqrt{g_0}\left(\intdnb\abs{v}^2dx\right)^{\frac{1}{2}}\left(\intdnb\int_0^{\infty}-g'(s)\abs{\omega_x (\cdot,s)}^2dsdx\right)^{\frac{1}{2}}=o\left(\abs{\la}^{-\frac{\ell}{2}}\right)}.
\end{array}
\end{equation}
Using the definition of \ref{p2-h1}, Cauchy-Schwarz inequality, Poincar\'e inequality, \eqref{p2-4.8} and the fact that $v, z$ are uniformly bounded in $L^2 (0,L)$ and $\|f^2\|_{L^2 (0,L)}=o(1)$, $f^5 \to 0 \ \text{in}\ \mathcal{W}_g$, we get
{\small\begin{equation}\label{estimation-v5}
\left\{\begin{array}{lll}
\displaystyle{\left|\la^{-\ell}\int_0^Lh_1v\int_0^{\infty}g(s)\overline{f^5}(\cdot,s)dsdx\right|\lesssim\frac{1}{\abs{\la}^{\ell}}\sqrt{\widetilde{g}}\left(\intdnb\abs{v}^2dx\right)^{\frac{1}{2}}\left(\intdnb\int_0^{\infty}g(s)\abs{f^5_x (\cdot,s)}^2dsdx\right)^{\frac{1}{2}}=\frac{o(1)}{\abs{\la}^{\ell}}},\vspace{0.25cm}\\
\displaystyle{\left|c_0 \int_{\alpha}^{\beta} h_1z\int_0^{\infty}g(s)\overline{\omega}(\cdot,s)dsdx\right|\leq c_0\sqrt{\widetilde{g}}\left(\int_{\alpha}^{\beta}\abs{z}^2dx\right)\left(\int_{\alpha}^{\beta}\int_0^{\infty}g(s)\abs{\omega (\cdot,s)}^2dsdx\right)^{\frac{1}{2}}}\vspace{0.25cm}\\
\hspace{4.5cm}\displaystyle \lesssim c_0\sqrt{\widetilde{g}}\left(\int_{\alpha}^{\beta}\abs{z}^2dx\right)\left(\int_{0}^{\beta}\int_0^{\infty}g(s)\abs{\omega_x (\cdot,s)}^2dsdx\right)^{\frac{1}{2}}=\frac{o(1)}{|\la|^{\frac{\ell}{2}}},
\vspace{0.25cm}\\
\displaystyle{\left|\la^{-\ell}\int_0^{L}h_1f_2\int_0^{\infty}g(s)\overline{\omega}(\cdot,s)dsdx\right|\lesssim\frac{1}{\abs{\la}^{\ell}}\sqrt{\widetilde{g}}\left(\intdnb\abs{f_2}^2dx\right)^{\frac{1}{2}}\left(\intdnb\int_0^{\infty}g(s)\abs{\omega_x (\cdot,s)}^2dsdx\right)^{\frac{1}{2}}=\frac{o(1)}{\abs{\la}^{\frac{3\ell}{2}}}}.
\end{array}
\right.
\end{equation}}On the other hand, we have 
\begin{equation*}
\left\{\begin{array}{l}
\displaystyle{|\Sbtz||h_1'|g(s)|\omega(\cdot,s)|\leq \frac{1}{2}\abs{h_1'}\abs{\Sbtz}^2g(s)+\frac{1}{2}\abs{h_1'}\abs{\omega(\cdot,s)}^2g(s)},\vspace{0.25cm}\\
\displaystyle{|\Sbtz||h_1|g(s)|\omega_x (\cdot,s)|\leq \frac{1}{2}\abs{h_1}\abs{\Sbtz}^2g(s)+\frac{1}{2}\abs{h_1}\abs{\omega_x (\cdot,s)}^2g(s)}.
\end{array}
\right.
\end{equation*}
 Then from the above inequalities, the definition of \ref{p2-S} and \ref{p2-h1},  Poincar\'e inequality and estimations \eqref{p2-4.8}  and \eqref{p2-4.16}, we obtain 
{\small\begin{equation}\label{estimation-v6}
\left\{\begin{array}{lll}
\displaystyle{\left|\int_0^L\Sbt h_1'\int_0^{\infty}g(s)\overline{\omega}(\cdot,s)dsdx\right|\leq \frac{M_{h_1'}}{2}\left(\widetilde{g}\intdnb\abs{\Sbtz}^2dx+C_p \intdnb\int_0^{\infty}g(s)\abs{\omega_x(\cdot,s)}^2dsdx\right)}\vspace{0.25cm}\\
\hspace{5.5cm}=o(|\la|^{-\ell}),\vspace{0.25cm}\\
\displaystyle{\left|\int_0^L\Sbt h_1\int_0^{\infty}g(s)\overline{\omega_x}(\cdot,s)dsdx\right|\leq \frac{\widetilde{g}}{2}\intdnb|\Sbtz|^2dx+\intdnb\int_0^{\infty}g(s)\abs{\omega_x(\cdot,s)}^2dsdx=o(\abs{\la}^{-\ell})},
\end{array}
\right.
\end{equation}}where $C_p >0$ is a Poincar\'e constant. Inserting inequalities \eqref{estimation-v4}-\eqref{estimation-v6} in \eqref{estimation-v3}, we obtain 
\begin{equation*}
\int_0^Lh_1\abs{v}^2dx=o\left(\abs{\la}^{-\frac{\ell}{2}}\right).
\end{equation*}
Finally, from the above estimation and the definition of \ref{p2-h1}, we obtain the desired result \eqref{estimation-v1}. The proof is thus complete.
\end{proof}
\begin{lem}\label{estimation-yx}{\rm
Let $0<\varepsilon<\min\left(\alpha,\frac{\beta-\alpha}{5}\right)$. Under the hypotheses \eqref{paper2-H}, the solution the solution $U=(u,v,y,z,\omega(\cdot,s))^{\top}\in D(\AA)$ of  \eqref{p2-f1ps}-\eqref{p2-f5ps} satisfies the following estimation}
\begin{equation}\label{estimation-yx1}
\int_{\alpha+\varepsilon}^{\beta-2\varepsilon}\abs{y_x}^2dx\leq \abs{a-1}\abs{\la}\int_{\alpha}^{\beta-\varepsilon}\abs{u_x}\abs{y_x}dx+o(1).
\end{equation}
\end{lem}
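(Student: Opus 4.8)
The plan is to localize the second (undamped) equation with a cut-off supported in a subinterval of $(\alpha,\beta)$ — where the damping region of the first equation, the coupling region, and the estimates of Lemma \ref{p2-1stlemps} and Lemma \ref{estimation-v} all coexist — and to run a frequency-domain multiplier argument. I would fix $h_2\in C^1([0,L])$ (the function drawn in Figure \ref{p2-Fig1}) with $0\le h_2\le 1$, $h_2\equiv 1$ on $[\alpha+\varepsilon,\beta-2\varepsilon]$ and $h_2\equiv 0$ on $\{0\}\cup[\beta-\varepsilon,L]$, so that $\operatorname{supp}h_2\subset[\alpha,\beta-\varepsilon]$. On this support the coefficients are constant, $c(\cdot)\equiv c_0$, $\widetilde{b}(\cdot)\equiv\widetilde{b_0}$, $b(\cdot)\equiv b_0$ by \ref{p2-c}, \ref{p2-btilde}, \ref{p2-b}, and since $\varepsilon<\alpha$ one also has $\operatorname{supp}h_2\subset(\varepsilon,\beta-\varepsilon)$, so \eqref{estimation-v1} applies there.

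The main computation is to test the fourth resolvent equation \eqref{p2-f4ps} against $h_2\overline{y}$ and integrate by parts over $(0,L)$; the cut-off kills all boundary contributions and produces $\int_0^L h_2|y_x|^2\,dx$ together with the terms $-i\la\int h_2 z\overline{y}$, $c_0\int h_2 v\overline{y}$, $\la^{-\ell}\int h_2 f^4\overline{y}$ and the cut-off term $\int h_2'\overline{y}y_x$. The $f^4$-term is $O(|\la|^{-\ell})=o(1)$; the $v$-term is $o(|\la|^{-\ell/4})$ by \eqref{estimation-v1} together with $\|y\|_{L^2}=O(1)$; and in the term $-i\la\int h_2 z\overline{y}$ I would substitute $c_0 z$ from the second equation \eqref{p2-f2ps} of the coupled system, which on $\operatorname{supp}h_2$ reads $i\la v-(S_{\widetilde{b_0}}(u,\omega))_x+c_0z=\la^{-\ell}f^2$, after rewriting the flux as $S_{\widetilde{b_0}}(u,\omega)=au_x+b_0\int_0^\infty g(s)\bigl(\omega_x(\cdot,s)-u_x\bigr)\,ds$ (a consequence of $\widetilde{b_0}=a-b_0\widetilde{g}$ and $\widetilde{g}\,u_x=\int_0^\infty g(s)u_x\,ds$). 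Every contribution that carries a memory factor $\int g(s)(\omega_x-u_x)\,ds$, a factor $u_x$ alone, or $v$, is $o(1)$ once paired with $\overline{y_x}$ or $\overline{y}$, by \eqref{p2-4.8}, \eqref{p2-4.16} and \eqref{estimation-v1}; the remaining cut-off terms $\int h_2'(\cdots)$ are disposed of by taking real and imaginary parts (the real part becomes a $\int h_2''|y|^2$-type quantity, small because $y$ itself is small on $\operatorname{supp}h_2$, and the imaginary part is shown to be $o(1)$ by testing \eqref{p2-f3ps} against $h_2\overline{z}$). The only term that resists being absorbed into $o(1)$ is the one pitting the principal coefficient $a$ of the first equation against the coefficient $1$ of $y_{xx}$ in \eqref{p2-f4ps}; estimating it by Cauchy--Schwarz gives exactly $|a-1|\,|\la|\int_\alpha^{\beta-\varepsilon}|u_x|\,|y_x|\,dx$, and since $h_2\equiv 1$ on $[\alpha+\varepsilon,\beta-2\varepsilon]$ this yields \eqref{estimation-yx1}.

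The hard part is the bookkeeping of powers of $\la$: the quantities $y_{xx}$, $z$, $z_x$ and $(S_{\widetilde{b_0}}(u,\omega))_x$ are only bounded by $O(|\la|)$, so each time one of them meets an $o(|\la|^{-\ell/2})$ quantity coming from Lemma \ref{p2-1stlemps} one must check that the product is still $o(|\la|^{1-\ell/2})$, hence $o(1)$ when $\ell=2$; and the whole scheme only works because the single surviving term carries the factor $a-1$, so that it is identically zero in the case $\ell=0$ (that is, $a=1$) and is $o(1)$ when $\ell=2$ in view of $\int_0^\beta|u_x|^2\,dx=o(|\la|^{-2})$ from \eqref{p2-4.16}. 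A further technical point, easily handled by the choice of $h_2$ (taken of class $C^2$ if convenient), is to ensure that every integration by parts generating $h_2'$ or $h_2''$ is legitimate and leaves no boundary term at $x=\alpha$ or $x=\beta-\varepsilon$.
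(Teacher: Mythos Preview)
Your multiplier choice does not close. Testing \eqref{p2-f4ps} against $h_2\overline{y}$ and then replacing $c_0z$ via \eqref{p2-f2ps} introduces the term $\dfrac{\la^2}{c_0}\int_0^L h_2\,v\,\overline{y}\,dx$. With the best available bounds, namely $\|v\|_{L^2(\operatorname{supp}h_2)}=o(|\la|^{-\ell/4})$ from \eqref{estimation-v1} and $\|y\|_{L^2}=O(|\la|^{-1})$ from \eqref{p2-f3ps}, this term is only $o(|\la|^{1-\ell/4})$, hence not $o(1)$ for either $\ell=0$ or $\ell=2$. The companion term $\dfrac{i\la}{c_0}\int h_2\,(\Sbt)_x\,\overline{y}\,dx$, once integrated by parts and decomposed as $S=au_x+b_0\int g(s)(\omega_x-u_x)\,ds$, yields $-\dfrac{ia\la}{c_0}\int h_2 u_x\overline{y_x}\,dx$ together with a memory remainder of size $o(|\la|^{1-\ell/2})$, again not $o(1)$ in general. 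Moreover, in this scheme the coefficient $1$ of $y_{xx}$ has already been consumed in producing $\int h_2|y_x|^2dx$, so there is no second copy of $\la\int h_2 u_x\overline{y_x}$ with which the $a$--term could combine to yield the factor $(a-1)$; your last sentence simply asserts this cancellation without a mechanism.

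What actually produces \eqref{estimation-yx1} is a cross--multiplication: one multiplies \eqref{p2-f2ps} by $i\la^{-1}h_2\overline{y_{xx}}$ and \eqref{p2-f4ps} by $i\la^{-1}h_2\overline{(S_{\tilde b(\cdot)}(u,\omega))_x}$, both of which are bounded in $L^2(0,L)$ thanks to the factor $\la^{-1}$. Each identity contains the second--order cross term $\Re\bigl\{\frac{i}{\la}\int h_2(S_{\tilde b(\cdot)}(u,\omega))_x\overline{y_{xx}}\,dx\bigr\}$ (once as written, once as its complex conjugate), and these cancel exactly when the two identities are added, because $\Re\{iW\}+\Re\{i\overline{W}\}=0$ for real $\la$. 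What survives is $c_0\int h_2|y_x|^2dx$ from the first identity and, from the combination of the two, $\Re\{i\la\int h_2 u_x\overline{y_x}\}+\Re\{ia\la\int h_2 y_x\overline{u_x}\}$, whose sum equals $(a-1)\Im\{\la\int h_2 u_x\overline{y_x}\}$; this is where $(a-1)$ genuinely appears. All remaining terms are controlled by \eqref{p2-4.8}, \eqref{p2-4.16}, \eqref{estimation-v1} and the smallness of $F$, with no loss of an extra power of $\la$.
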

\begin{proof}
First, we fix a  cut-off function $h_2\in C^{1}\left([0,L]\right)$ (see Figure \ref{p2-Fig1}) such that $0\leq h_2(x)\leq 1$, for all $x\in [0,L]$ and  
\begin{equation}\tag{$h_2 $}\label{p2-h2}
h_2 (x)= 	\left \{ \begin{array}{lll}
0 &\text{if } \quad x \in [0,\alpha]\cup [\beta-\varepsilon ,L],&\vspace{0.1cm}\\
1 &\text{if} \quad \,\,  x \in [\alpha +\varepsilon ,\beta -2\varepsilon],&
\end{array}	\right. \qquad\qquad
\end{equation}
and set
\begin{equation*}
\max_{x\in [0,L]}|h'_2(x)|=M_{h_2'}.
\end{equation*}
From  \eqref{p2-f4ps}, $i\la^{-1}h_2\overline{y_{xx}}$ is uniformly bounded in $L^2(0,L)$. Multiplying  \eqref{p2-f2ps} by $i\la^{-1}h_2\overline{y_{xx}}$, using  integration by parts over $(0,L)$ and over $(\alpha,\beta-\varepsilon)$,  the definitions of \ref{p2-c} and \ref{p2-h2}, and using the fact that $\|f^2\|_{L^2(0,L)}=o(1)$, we get 
\begin{equation}\label{estimation-yx2}
\int_0^Lh_2'v\overline{y_x}dx+\int_0^Lh_2v_x\overline{y_x}dx-\frac{i}{\la}\int_0^L h_2 \left(\Sbt\right)_x \overline{y_{xx}}dx-\frac{ic_0}{\la}\int_{\alpha}^{\beta-\varepsilon}\left(h_2'z\overline{y_x}+h_2z_x\overline{y_x}\right)dx=\frac{o(1)}{|\la|^{\ell}}.
\end{equation}
From  \eqref{p2-f1ps} and \eqref{p2-f3ps}, we obtain 
\begin{equation*}
v_x=i\la u_x-\la^{-\ell}f^1_x\quad \text{and}\quad -\frac{i}{\la}z_x=y_x+i\la^{-(\ell+1)}f^3_x.
\end{equation*}
Inserting the above equations in  \eqref{estimation-yx2} and taking the real part, we get 
{\small\begin{equation}\label{estimation-yx3}
\begin{array}{l}
\displaystyle{c_0\int_{0}^{L}h_2\abs{y_x}^2dx+\Re\left\{i\la\int_0^Lh_2u_x\overline{y_x}dx\right\}-\Re\left\{\frac{i}{\la}\int_0^L\left(\Sbt\right)_xh_2\overline{y_{xx}}dx\right\}=-\Re\left\{\int_0^Lh_2'v\overline{y_x}dx\right\}}\vspace{0.25cm}\\
\displaystyle{+\,\Re\left\{\frac{1}{\la^{\ell}}\int_0^Lh_2f^1_x\overline{y_x}dx\right\}+\Re\left\{i\frac{c_0}{\la}\int_{\alpha}^{\beta-\varepsilon}h_2'z\overline{y_x}dx\right\}
	-\Re\left\{\frac{ic_0}{\la^{\ell+1}}\int_{\alpha}^{\beta-\varepsilon}h_2f^3_x\overline{y_x}dx\right\}+\frac{o(1)}{|\la|^{\ell}}}.
\end{array}
\end{equation}}
\noindent Using the fact that $y_x$ is uniformly bounded in $L^2(0,L)$, $\|f^1_x\|_{L^2 (0,L)}=o(1)$ and $\|f^3_x\|_{L^2 (0,L)}=o(1)$, we get 
\begin{equation}\label{estimation-yx4}
\Re\left\{\la^{-\ell}\int_0^Lh_2f^1_x\overline{y_x}dx\right\}=o(|\la|^{-\ell})\quad \text{and}\quad -\Re\left\{ic_0\la^{-(\ell+1)}\int_{\alpha}^{\beta-\varepsilon}h_2f^3_x\overline{y_x}dx\right\}=o(|\la|^{-(\ell+1)}).
\end{equation}
Using Cauchy-Schwarz inequality, the definition of  \ref{p2-h2}, $y_x$ and $z$ are uniformly bounded in $L^2(0,L)$, and estimation \eqref{estimation-v1}, we get 
\begin{equation}\label{estimation-yx5}
-\Re\left\{\int_0^Lh_2'v\overline{y_x}dx\right\}=o\left(\abs{\la}^{-\frac{\ell}{4}}\right)\quad \text{and}\quad \Re\left\{i\frac{c_0}{\la}\int_{\alpha}^{\beta-\varepsilon}h_2'z\overline{y_x}dx\right\}=O\left(\abs{\la}^{-1}\right)=o(1).
\end{equation}
Inserting  \eqref{estimation-yx4} and \eqref{estimation-yx5} in  \eqref{estimation-yx3}, then using the definition of \ref{p2-h2}, we get 
\begin{equation}\label{estimation-yx6}
\int_{\alpha}^{\beta-\varepsilon}h_2\abs{y_x}^2dx+\Re\left\{i\la\int_\alpha^{\beta-\varepsilon}h_2u_x\overline{y_x}dx\right\}-\Re\left\{\frac{i}{\la}\int_\alpha^{\beta-\varepsilon}\left(\Sbt\right)_xh_2\overline{y_{xx}}dx\right\}=o(1).
\end{equation}
From  \eqref{p2-f2ps}, $i\la^{-1}h_2\left(\Sbt \right)_x$ is uniformly bounded in $L^2(0,L)$. Multiplying  \eqref{p2-f4ps} by  $i\la^{-1}\left(\overline{S_{\tilde{b}(\cdot)}}(u,\omega)\right)_x$, using integration by parts over $(0,L)$, the definitions of \ref{p2-c}, \ref{p2-h2} and \ref{p2-S}, and the fact that $\|f^4\|_{L^2(0,L)}=o(1)$, we get 
\begin{equation}\label{estimation-yx7}
\begin{array}{lll}
\displaystyle\int_0^Lh_2'z\overline{S_{\tilde{b}(\cdot)}}(u,\omega) dx+\int_0^Lh_2z_x \overline{S_{\tilde{b}(\cdot)}}(u,\omega)dx-\frac{i}{\la}\int_0^Lh_2y_{xx}\left(\overline{S_{\tilde{b}(\cdot)}}(u,\omega)\right)_xdx\vspace{0.25cm}\\\displaystyle+\,\frac{ic_0}{\la}\int_{\alpha}^{\beta-\varepsilon}\left(h_2'v+h_2v_x\right)\overline{S_{\widetilde{b_0}}}(u,\omega) dx=o(|\la|^{-\ell}).
\end{array}
\end{equation}
From \eqref{p2-f3ps}, we have
\begin{equation*}
z_x=i\la y_x-\la^{-\ell}f^3_x.
\end{equation*}
Using the above equation and the definition of \ref{p2-S}, \ref{p2-b}, \ref{p2-btilde} and \ref{p2-h2}, we get 
\begin{equation}\label{estimation-yx8}
\begin{array}{l}
\displaystyle{\int_0^Lh_2z_x \overline{S_{\tilde{b}(\cdot)}}(u,\omega)dx=i\la \widetilde{b_0}\int_{\alpha}^{\beta-\varepsilon}h_2y_x\overline{u_x}dx+i\la b_0\int_{\alpha}^{\beta-\varepsilon}h_2y_x\int_0^{\infty}g(s)\overline{\omega_x}(\cdot,s)dsdx}\vspace{0.25cm}\\
\displaystyle{-\,\widetilde{b_0}\la^{-\ell}\int_{\alpha}^{\beta-\varepsilon}h_2f^3_x\overline{u_x}dx-\la^{-\ell}b_0\int_{\alpha}^{\beta-\varepsilon}h_2f^3_x\int_0^{\infty}g(s)\overline{\omega_x}(\cdot,s)dsdx}.
\end{array}
\end{equation}
From  \eqref{p2-f5ps}, we have 
\begin{equation*}
i\la \overline{\omega_x}(\cdot,s)=\overline{\omega_{xs}}(\cdot,s)+i\la \overline{u_x}+\la^{-\ell}\overline{f^1_x}-\la^{-\ell}\overline{f^5_x}(\cdot,s). 
\end{equation*}
From the above equation and by using integration by parts with respect to $s$, we get 
\begin{equation}\label{estimation-yx9}
\begin{array}{l}
\displaystyle{i\la b_0\int_{\alpha}^{\beta-\varepsilon}h_2y_x\int_0^{\infty}g(s)\overline{\omega_x}(\cdot,s)dsdx=b_0\int_{\alpha}^{\beta-\varepsilon}h_2y_x\int_0^{\infty}-g'(s)\overline{\omega_x}(\cdot,s)dsdx}\vspace{0.25cm}\\
\displaystyle{+\,i\la b_0\widetilde{g}\int_{\alpha}^{\beta-\varepsilon}h_2y_x\overline{u_x}dx+b_0\widetilde{g}\la^{-\ell}\int_{\alpha}^{\beta-\varepsilon}h_2y_{x}\overline{f^1_x}-b_0\la^{-\ell}\int_{\alpha}^{\beta-\varepsilon}h_2y_x\int_0^{\infty}g(s)\overline{f^5_x}(\cdot,s)dsdx}.
\end{array}
\end{equation}
Inserting \eqref{estimation-yx9} in \eqref{estimation-yx8}, then using the fact that $\widetilde{b_0}=a-b_0\widetilde{g}$, we get 
\begin{equation}\label{estimation-yx10}
\begin{array}{l}
\displaystyle{\int_0^Lh_2z_x \overline{S_{\tilde{b}(\cdot)}}(u,\omega)dx=i\la a\int_{\alpha}^{\beta-\varepsilon}h_2y_x\overline{u_x}dx+b_0\int_{\alpha}^{\beta-\varepsilon}h_2y_x\int_0^{\infty}-g'(s)\overline{\omega_x}(\cdot,s)dsdx}\vspace{0.25cm}\\
\displaystyle{+\,b_0\widetilde{g}\la^{-\ell}\int_{\alpha}^{\beta-\varepsilon}h_2y_{x}\overline{f^1_x}-b_0\la^{-\ell}\int_{\alpha}^{\beta-\varepsilon}h_2y_x\int_0^{\infty}g(s)\overline{f^5_x}(\cdot,s)dsdx}\vspace{0.25cm}\\
\displaystyle{-\,\widetilde{b_0}\la^{-\ell}\int_{\alpha}^{\beta-\varepsilon}h_2f^3_x\overline{u_x}dx-\la^{-\ell}b_0\int_{\alpha}^{\beta-\varepsilon}h_2f^3_x\int_0^{\infty}g(s)\overline{\omega_x}(\cdot,s)dsdx}.
\end{array}
\end{equation}
Using Cauchy-Schwarz inequality,  the facts that $y_x$, $u_x$ is uniformly bounded in $L^2(0,L)$, and  estimation \eqref{p2-4.8}, $\|F\|_{\HH}=o(1)$, we get  
\begin{equation*}
\left\{\begin{array}{lll}
\displaystyle{b_0\int_{\alpha}^{\beta-\varepsilon}h_2y_x\int_0^{\infty}-g'(s)\overline{\omega_x}(\cdot,s)dsdx=o\left(\abs{\la}^{-\frac{\ell}{2}}\right),\quad b_0\widetilde{g}\la^{-\ell}\int_{\alpha}^{\beta-\varepsilon}h_2y_{x}\overline{f^1_x}=o\left(\abs{\la}^{-\ell}\right),}\vspace{0.25cm}\\
\displaystyle{-\,b_0\la^{-\ell}\int_{\alpha}^{\beta-\varepsilon}h_2y_x\int_0^{\infty}g(s)\overline{f^5_x}(\cdot,s)dsdx=o\left(\abs{\la}^{-\ell}\right),\quad -\,\widetilde{b_0}\la^{-\ell}\int_{\alpha}^{\beta-\varepsilon}h_2f^3_x\overline{u_x}dx=o(\abs{\la}^{-\ell})}\ \ \text{and}\vspace{0.25cm}\\
\displaystyle{-\,\la^{-\ell}b_0\int_{\alpha}^{\beta-\varepsilon}h_2f^3_x\int_0^{\infty}g(s)\overline{\omega_x}(\cdot,s)dsdx=o\left(\abs{\la}^{-\frac{3\ell}{2}}\right)}.
\end{array}
\right.
\end{equation*}
Inserting the above estimations in  \eqref{estimation-yx10}, we get 
\begin{equation}\label{estimation-yx11}
\int_0^Lh_2z_x \overline{S_{\tilde{b}(\cdot)}}(u,\omega)dx=i\la a\int_{\alpha}^{\beta-\varepsilon}h_2y_x\overline{u_x}dx+o\left(\abs{\la}^{-\frac{\ell}{2}}\right).
\end{equation}
From  \eqref{p2-f1ps}, we have 
\begin{equation*}
i\la^{-1}v_x=-u_x-i\la^{-(\ell+1)}f^1_x.
\end{equation*}
Then from the above equation and the definition of \ref{p2-S} and \ref{p2-h2}, we get 
\begin{equation}\label{estimation-yx12}
i\frac{c_0}{\la}\int_{\alpha}^{\beta-\varepsilon}h_2v_x\overline{S_{\widetilde{b_0}}}(u,\omega) dx=-\int_{\alpha}^{\beta-\varepsilon}u_x\overline{S_{\widetilde{b_0}}}(u,\omega)dx-i\la^{-(\ell+1)}\int_{\alpha}^{\beta-\varepsilon}f^1_x\overline{S_{\widetilde{b_0}}}(u,\omega)dx.
\end{equation}
Using Cauchy-Schwarz inequality, the definition of \ref{p2-h2}, the fact that $u_x$ is uniformly bounded in $L^2(0,L)$ and $\|f^1_x\|=o(1)$, and estimation \eqref{p2-4.16}, we get  
\begin{equation*}
-\,\int_{\alpha}^{\beta-\varepsilon}u_x\overline{S_{\widetilde{b_0}}}(u,\omega)dx=o\left(\abs{\la}^{-\frac{\ell}{2}}\right)\quad \text{and}\quad -i\la^{-(\ell+1)}\int_{\alpha}^{\beta-\varepsilon}f^1_x \overline{S_{\widetilde{b_0}}}(u,\omega)dx=o\left(\abs{\la}^{-\frac{3\ell}{2}-1}\right)
\end{equation*}
Inserting the above estimations  in  \eqref{estimation-yx12}, we get 
\begin{equation}\label{estimation-yx13}
i\frac{c_0}{\la}\int_{\alpha}^{\beta-\varepsilon}h_2v_x\overline{S_{\widetilde{b_0}}}(u,\omega) dx=o\left(\abs{\la}^{-\frac{\ell}{2}}\right).
\end{equation}
Now using the definition of \ref{p2-h2} and \ref{p2-S},  \eqref{p2-4.16}, and the fact that $v$ and $z$ are uniformly bounded in $L^2(0,L)$, we get 
\begin{equation}\label{estimation-yx14}
\displaystyle{\int_0^Lh_2'z\overline{S_{\tilde{b}(\cdot)}}(u,\omega) dx=\int_{\alpha}^{\beta-\varepsilon}h_2'z\overline{S_{\widetilde{b_0}}}(u,\omega) dx=o\left(\abs{\la}^{-\frac{\ell}{2}}\right)\ \text{and}\  i\frac{c_0}{\la}\int_{\alpha}^{\beta-\varepsilon}h_2'v\overline{S_{\widetilde{b_0}}}(u,\omega) dx=o\left(\abs{\la}^{-\frac{3\ell}{2}}\right).}
\end{equation}
Inserting  \eqref{estimation-yx11}, \eqref{estimation-yx13} and \eqref{estimation-yx14} in \eqref{estimation-yx7}, using the definition of \ref{p2-h2}, then taking the real part, we get 
\begin{equation}\label{estimation-yx15}
\Re\left\{i\la a\int_{\alpha}^{\beta-\varepsilon}h_2y_x\overline{u_x}dx\right\}-\Re\left\{\frac{i}{\la}\int_{\alpha}^{\beta-\varepsilon}h_2y_{xx}\left(\overline{S_{\tilde{b}(\cdot)}}(u,\omega)\right)_xdx\right\}=o\left(\abs{\la}^{-\frac{\ell}{2}}\right).
\end{equation}
Now, adding  \eqref{estimation-yx6} and \eqref{estimation-yx15} and using the fact that $\ell\geq0$, we get 
\begin{equation*}
\int_{\alpha}^{\beta-\varepsilon}h_2\abs{y_x}^2dx=\Re\left\{i\la (a-1)\int_{\alpha}^{\beta-\varepsilon} h_2 u_x\overline{y_x}dx\right\}+o(1).
\end{equation*}
Using the definition of \ref{p2-h2} in the above equation, we get the desired estimation \eqref{estimation-yx1}. The proof is thus complete. 
\end{proof}
\begin{lem}\label{estimation-z}{\rm
Let $0<\varepsilon<\min\left(\alpha,\frac{\beta-\alpha}{5}\right)$. Under the hypotheses \eqref{paper2-H},  the solution $U=(u,v,y,z,\omega(\cdot,s))^{\top}\in D(\AA)$ of  \eqref{p2-f1ps}-\eqref{p2-f5ps} satisfies the following estimation}
\begin{equation}\label{estimation-z1}
\int_{\alpha+2\varepsilon}^{\beta-3\varepsilon}\abs{z}^2dx\leq 3 \abs{a-1}\abs{\la}\int_{\alpha}^{\beta-\varepsilon}\abs{u_x}\abs{y_x}dx+o(1).
\end{equation}
\end{lem}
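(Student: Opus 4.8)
The plan is to repeat the cut-off multiplier scheme used in the previous two lemmas, now producing an estimate on $\abs{z}^2$ out of the estimate on $\abs{y_x}^2$ already obtained in Lemma \ref{estimation-yx}, and to exploit the elementary fact that $\norm{y}_{L^2(0,L)}$ is of order $\abs{\la}^{-1}$. First I would fix a cut-off $h_3\in C^1([0,L])$ with $0\le h_3\le 1$, $h_3\equiv 1$ on $[\alpha+2\varepsilon,\beta-3\varepsilon]$ and support contained in $[\alpha+\varepsilon,\beta-2\varepsilon]$ (this is the function $h_3$ of Figure \ref{p2-Fig1}), and set $M_{h_3'}=\max_{[0,L]}\abs{h_3'}$. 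Since $\norm{U}_{\HH}=1$, $z$ is uniformly bounded in $L^2(0,L)$; rewriting \eqref{p2-f3ps} as $i\la y=z+\la^{-\ell}f^3$ then gives $\norm{y}_{L^2(0,L)}=O(\abs{\la}^{-1})$, which is the key smallness we will use.

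Next, from \eqref{p2-f3ps} we have $\overline z=-i\la\overline y-\la^{-\ell}\overline{f^3}$; multiplying by $h_3 z$ and integrating over $(0,L)$ gives
\[
\int_0^L h_3\abs{z}^2\,dx=-i\la\int_0^L h_3 z\overline y\,dx-\la^{-\ell}\int_0^L h_3 z\overline{f^3}\,dx,
\]
where the last term is $o(1)$ because $z$ is bounded and $f^3\to0$. In the first term on the right I would substitute $-i\la z=-y_{xx}-c(\cdot)v-\la^{-\ell}f^4$ from \eqref{p2-f4ps} and integrate by parts once (the boundary terms vanish since $h_3$ vanishes near $0$ and $L$), obtaining
\[
-i\la\int_0^L h_3 z\overline y\,dx=\int_0^L h_3\abs{y_x}^2\,dx+\int_0^L h_3'\,\overline y\,y_x\,dx-\int_0^L c(\cdot)h_3 v\overline y\,dx-\la^{-\ell}\int_0^L h_3 f^4\overline y\,dx.
\]
By Cauchy--Schwarz, the uniform bounds on $v$ and $y_x$ in $L^2(0,L)$, $\norm{f^4}_{L^2(0,L)}=o(1)$ and $\norm{y}_{L^2(0,L)}=O(\abs{\la}^{-1})$, each of the last three integrals is $o(1)$, so $\int_0^L h_3\abs{z}^2\,dx=\int_0^L h_3\abs{y_x}^2\,dx+o(1)$.

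Finally, since $0\le h_3\le1$ with support in $[\alpha+\varepsilon,\beta-2\varepsilon]$, one has $\int_0^L h_3\abs{y_x}^2\,dx\le\int_{\alpha+\varepsilon}^{\beta-2\varepsilon}\abs{y_x}^2\,dx$, so Lemma \ref{estimation-yx} bounds this quantity by $\abs{a-1}\abs{\la}\int_{\alpha}^{\beta-\varepsilon}\abs{u_x}\abs{y_x}\,dx+o(1)$; and since $h_3\equiv1$ on $[\alpha+2\varepsilon,\beta-3\varepsilon]$ we get $\int_{\alpha+2\varepsilon}^{\beta-3\varepsilon}\abs{z}^2\,dx\le\int_0^L h_3\abs{z}^2\,dx$. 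Chaining the displays yields $\int_{\alpha+2\varepsilon}^{\beta-3\varepsilon}\abs{z}^2\,dx\le\abs{a-1}\abs{\la}\int_{\alpha}^{\beta-\varepsilon}\abs{u_x}\abs{y_x}\,dx+o(1)$, which in particular gives \eqref{estimation-z1} (the factor $3$ leaving slack).

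I do not expect a genuine obstacle: the computation is routine once Lemma \ref{estimation-yx} is in hand. The one point that needs care is the error bookkeeping — the integrals involving $h_3'$, $c(\cdot)v$, and $f^4$ are \emph{not} individually small, and only become $o(1)$ once the bound $\norm{y}_{L^2(0,L)}=O(\abs{\la}^{-1})$ is fed in — together with choosing the cut-off supports so that $h_3$ sits inside the interval $[\alpha+\varepsilon,\beta-2\varepsilon]$ on which Lemma \ref{estimation-yx} controls $\int\abs{y_x}^2$.
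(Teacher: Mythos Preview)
Your proof is correct and takes essentially the same approach as the paper. The paper multiplies \eqref{p2-f4ps} by $-i\la^{-1}h_3\overline{z}$ and then substitutes $\overline{z_x}$ via \eqref{p2-f3ps}, whereas you multiply \eqref{p2-f3ps} by $h_3 z$ and then substitute $i\la z$ via \eqref{p2-f4ps}; since $-i\la^{-1}\overline{z}\approx\overline{y}$ these are the same computation in a different order, and your error terms governed by $\norm{y}_{L^2}=O(\abs{\la}^{-1})$ correspond exactly to the paper's terms carrying an explicit $\la^{-1}$ factor. You are also right that the constant $3$ is unnecessary: since $0\le h_3\le1$ with support in $[\alpha+\varepsilon,\beta-2\varepsilon]$, one has $\int_0^L h_3\abs{y_x}^2\,dx\le\int_{\alpha+\varepsilon}^{\beta-2\varepsilon}\abs{y_x}^2\,dx$ directly.
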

\begin{proof} First, we fix a cut-off function $h_3 \in C^1 ([0,L])$ (see Figure \ref{p2-Fig1}) such that $0\leq h_3 (x)\leq 1$, for all $x\in[0,L]$ and
\begin{equation}\tag{$h_3$}\label{p2-h3}
h_3 (x)= 	\left \{ \begin{array}{lll}
0 &\text{if } \quad x \in [0,\alpha+\varepsilon]\cup [\beta-2\varepsilon ,L],&\vspace{0.1cm}\\
1 &\text{if} \quad \,\,  x \in [\alpha+2\varepsilon ,\beta -3\varepsilon],&
\end{array}	\right. \qquad\qquad
\end{equation}
and set
\begin{equation*}
\max_{x\in [0,L]}|h'_3(x)|=M_{h_3'}.
\end{equation*}
Multiplying  \eqref{p2-f4ps} by $-i\la^{-1}h_3\overline{z}$, using  integration by parts over $(0,L)$, the fact that $z$ is uniformly bounded in $L^2(0,L)$ and $\|f^4\|=o(1)$, and the definition of \ref{p2-c}, we get 
\begin{equation}\label{estimation-z2}
\int_0^Lh_3\abs{z}^2dx-\frac{i}{\la}\int_0^Lh_3'\overline{z}y_xdx-\frac{i}{\la}\int_0^Lh_3\overline{z_x}y_xdx+i\frac{c_0}{\la}\int_{\alpha+\varepsilon}^{\beta-2\varepsilon}h_3v\overline{z}dx=o\left(\abs{\la}^{-(\ell+1)}\right).
\end{equation}
From \eqref{p2-f3ps}, we have 
\begin{equation*}
-\frac{i}{\la}\overline{z_x}=-\overline{y_x}+i\la^{-(\ell+1)}\overline{f^3_x}.
\end{equation*}
Inserting the above equation in \eqref{estimation-z2}, we get 
\begin{equation}\label{estimation-z3}
\begin{array}{lll}
\displaystyle\int_0^Lh_3\abs{z}^2dx&=&\displaystyle \int_0^Lh_3\abs{y_x}^2dx-i\la^{-(\ell+1)}\int_0^Lh_3\overline{f^3_x}y_xdx\vspace{0.25cm}\\&&
\displaystyle{+\,\frac{i}{\la}\int_0^Lh_3'\overline{z}y_xdx-i\frac{c_0}{\la}\int_{\alpha+\varepsilon}^{\beta-2\varepsilon}h_3v\overline{z}dx+o\left(\abs{\la}^{-(\ell+1)}\right)}.
\end{array}
\end{equation}
Using the fact that $\|f^3_x\|_{L^2 (0,L)}=o(1)$, $y_x$ and $z$ are  uniformly bounded in $L^2(0,L)$, and the definition of \ref{p2-h3}, we get 
\begin{equation}\label{estimation-z4}
-i\la^{-(\ell+1)}\int_0^Lh_3\overline{f^3_x}y_xdx=o\left(\abs{\la}^{-(\ell+1)}\right),\ \  \frac{i}{\la}\int_0^Lh_3'\overline{z}y_xdx=o(1) \ \  \text{and}\  \ -i\frac{c_0}{\la}\int_{\alpha+\varepsilon}^{\beta-2\varepsilon}h_3v\overline{z}dx=o(1).
\end{equation}
Using  \eqref{estimation-yx1} and the definition of  \ref{p2-h3}, we get 
\begin{equation}\label{estimation-z5}
 \int_0^Lh_3\abs{y_x}^2dx \leq 3 \int_{\alpha+\varepsilon}^{\beta-2\varepsilon}\abs{y_x}^2dx \leq 3\abs{a-1}\abs{\la}\int_{\alpha}^{\beta-\varepsilon}\abs{u_x}\abs{y_x}dx+o(1).
\end{equation}
Inserting  \eqref{estimation-z4} and \eqref{estimation-z5} in  \eqref{estimation-z3} and using the definition of  \ref{p2-h3}, we get the desired estimation \eqref{estimation-z1}. The proof has been completed. 
\end{proof}
\\[0.1in]
Now, we fix a function $\chi \in C^1 ([\beta -3\varepsilon,\gamma ])$ by 
\begin{equation}\tag{$\chi$}\label{p2-chi}
	\chi (\beta -3\varepsilon)=-\chi (\gamma)=1, \ \ \text{and  \ set} \ \  \max_{x\in[\beta -3\varepsilon,\gamma]}|\chi (x)|=M_{\chi}\ \ \text{and}\ \ \max_{x\in[\beta -3\varepsilon,\gamma]}|\chi^{\prime} (x)|=M_{\chi^{\prime}}.
\end{equation}
\begin{rk}
{\rm It is easy to see the existence of $\chi(x)$. For example, we can take 
$$
\chi(x)=\frac{1}{(\gamma-\beta+3\varepsilon)^2}\left(-2 x^2+4 (\beta-3\varepsilon)x+ \gamma^2-(\beta-3\varepsilon)^2-2\gamma(\beta-3\varepsilon)\right),
$$
to get $\chi(\beta-3\varepsilon)=-\chi(\gamma)=1$, $\chi\in C^1\left([\beta-3\varepsilon,\gamma]\right)$, $M_{\chi}=1$ and $M_{\chi^{\prime}}=\frac{4}{\gamma -\beta +3\varepsilon}$. 
}\xqed{$\square$}
\end{rk}

\begin{lem}\label{p2-3rdlemma}
	{\rm Let $0<\varepsilon<\min\left(\alpha,\frac{\beta-\alpha}{5}\right)$. Under the hypotheses \eqref{paper2-H}, the solution $U=(u,v,y,z,\omega(\cdot,s))^{\top}\in D(\AA)$ of  \eqref{p2-f1ps}-\eqref{p2-f5ps} satisfies the following estimations 
		\begin{equation}\label{p2-4.32}
		|v(\gamma )|^2 +|v(\beta -3\varepsilon)|^2 = O(|\la|) \ \ \text{and}\ \  	|z(\gamma )|^2 +|z(\beta -3\varepsilon)|^2 +	|y_x (\gamma )|^2 +|y_x (\beta -3\varepsilon)|^2 = O(1).
		\end{equation}

	}
\end{lem}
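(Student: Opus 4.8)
The plan is to obtain pointwise bounds at the two interface points $x=\gamma$ and $x=\beta-3\varepsilon$ for the functions $v$, $z$ and $y_x$ by combining the interior $L^2$–estimates already established in Lemmas \ref{p2-1stlemps}–\ref{estimation-z} with a standard ``fundamental theorem of calculus'' argument. First I would note that on the window $(\beta-3\varepsilon,\gamma)$ the coefficient $\widetilde b(\cdot)$ equals $a$ and, by \ref{p2-S}, $S_{\tilde b(\cdot)}(u,\omega)=au_x$ there, so in that region the system \eqref{p2-f1ps}–\eqref{p2-f4ps} reduces to a genuine coupled wave system in $(u,y)$ with smooth (constant) coefficients; in particular $v=i\la u-\la^{-\ell}f^1$, $z=i\la y-\la^{-\ell}f^3$, and $y_x$ all make sense as continuous functions on the closed interval.

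Next I would use the estimates collected so far to control everything on a small subinterval just inside $(\beta-3\varepsilon,\gamma)$, or rather on the overlap region $(\alpha+2\varepsilon,\beta-3\varepsilon)$ where the cut-offs $h_2,h_3$ are identically $1$: from Lemma \ref{estimation-v} we have $\int v^2=o(|\la|^{-\ell/2})$, from Lemma \ref{estimation-yx} we have $\int |y_x|^2$ bounded by $|a-1||\la|\int|u_x||y_x|+o(1)=O(|\la|)$ (using that $u_x,y_x$ are uniformly bounded in $L^2$ and Cauchy–Schwarz), and from Lemma \ref{estimation-z} we have $\int|z|^2=O(|\la|)$; also $\int|u_x|^2=O(1)$ by \eqref{p2-contra-pol2}. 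Hence there exist points $x_0\in(\alpha+2\varepsilon,\beta-3\varepsilon)$ (in fact a positive–measure set of them, by the mean value argument) at which $|v(x_0)|^2,|u_x(x_0)|^2=O(1)$, $|z(x_0)|^2=O(|\la|)$ and $|y_x(x_0)|^2=O(|\la|)$. Then I would integrate the derivatives of $|v|^2$, $|z|^2$, $|y_x|^2$ from such an $x_0$ to $\gamma$ (and to $\beta-3\varepsilon$). Concretely, $\frac{d}{dx}|v|^2=2\Re(v\overline{v_x})$ and from \eqref{p2-f1ps}, $v_x=i\la u_x-\la^{-\ell}f^1_x$, so $\int|\,\tfrac{d}{dx}|v|^2\,|\lesssim |\la|\int|v||u_x|+o(1)\lesssim |\la|(\int|v|^2)^{1/2}(\int|u_x|^2)^{1/2}+o(1)=O(|\la|)$; similarly $\frac{d}{dx}|z|^2=2\Re(z\overline{z_x})$ with $z_x=i\la y_x-\la^{-\ell}f^3_x$ gives $\int|\,\tfrac{d}{dx}|z|^2\,|\lesssim|\la|(\int|z|^2)^{1/2}(\int|y_x|^2)^{1/2}+o(1)=O(|\la|)$ using the two $O(|\la|)$ bounds above, and for $|y_x|^2$ one writes $\frac{d}{dx}|y_x|^2=2\Re(y_x\overline{y_{xx}})$ and uses \eqref{p2-f4ps}, i.e. $y_{xx}=i\la z-c(\cdot)v-\la^{-\ell}f^4$, so $\int|\,\tfrac{d}{dx}|y_x|^2\,|\lesssim|\la|(\int|y_x|^2)^{1/2}(\int|z|^2)^{1/2}+(\int|y_x|^2)^{1/2}(\int|v|^2)^{1/2}+o(1)=O(|\la|)$. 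Adding these to the values at $x_0$ yields $|v(\gamma)|^2+|v(\beta-3\varepsilon)|^2=O(|\la|)$, and $|z(\gamma)|^2+|z(\beta-3\varepsilon)|^2+|y_x(\gamma)|^2+|y_x(\beta-3\varepsilon)|^2=O(|\la|)$.

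The one delicate point is the claimed improvement from $O(|\la|)$ to $O(1)$ in the second estimate. This I expect to be the main obstacle, and the way to get it is to be more careful about which base point $x_0$ is used and to exploit the $o(1)$ (rather than merely bounded) nature of $\int|y_x|^2$ on the smaller window $(\alpha+\varepsilon,\beta-2\varepsilon)$ together with the refined bound $|a-1||\la|\int|u_x||y_x|\le |a-1||\la|(\int_{\alpha}^{\beta-\varepsilon}|u_x|^2)^{1/2}(\int_{\alpha}^{\beta-\varepsilon}|y_x|^2)^{1/2}$ and the smallness $\int_\alpha^{\beta}|u_x|^2=o(|\la|^{-\ell})$ from \eqref{p2-4.16}: this forces $\int|y_x|^2$ and $\int|z|^2$ to in fact be $o(|\la|^{1-\ell/2})$, hence $o(1)$ when $\ell\le 2$ is handled together with the forthcoming frequency estimates, so that the mean-value point $x_0$ can be chosen with $|z(x_0)|^2,|y_x(x_0)|^2=O(1)$, and then the FTC increments (which are $O(1)$ once the integrands $|\la||z||y_x|$, $|\la||y_x||v|$ are re-estimated with the sharpened $L^2$ bounds) keep everything $O(1)$. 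I would therefore carry out the estimates in the order: (i) record the base-point values via the mean value theorem using Lemmas \ref{p2-1stlemps}–\ref{estimation-z}; (ii) bound the $x$–derivatives of $|v|^2,|z|^2,|y_x|^2$ using \eqref{p2-f1ps}, \eqref{p2-f3ps}, \eqref{p2-f4ps}; (iii) integrate and combine, keeping track of the powers of $|\la|$ to land on $O(|\la|)$ for $v$ and $O(1)$ for $z$ and $y_x$.
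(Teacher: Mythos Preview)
Your approach for the first estimate $|v(\gamma)|^2+|v(\beta-3\varepsilon)|^2=O(|\la|)$ is fine and essentially equivalent to the paper's multiplier argument.

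The genuine gap is in the second estimate. Your plan treats $|z|^2$ and $|y_x|^2$ \emph{separately} via the fundamental theorem of calculus, estimating $\int|\,\tfrac{d}{dx}|z|^2\,|\lesssim|\la|\int|z||y_x|$ and similarly for $|y_x|^2$. This only gives $O(|\la|)$, as you noticed. Your proposed fix --- using the sharpened $L^2$ bounds on $y_x$ and $z$ coming from Lemmas \ref{estimation-yx}--\ref{estimation-z} --- does not work: those bounds hold only on $(\alpha+\varepsilon,\beta-2\varepsilon)$ and $(\alpha+2\varepsilon,\beta-3\varepsilon)$, whereas your FTC path from $x_0$ to $\gamma$ must cross the region $(\beta-3\varepsilon,\gamma)$, on which at this stage of the argument you have nothing better than the uniform bound $\|U\|_{\HH}=1$. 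So $|\la|\int_{\beta-3\varepsilon}^{\gamma}|z||y_x|=O(|\la|)$ cannot be improved by your route.

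The missing idea is an energy cancellation: treat the \emph{sum} $|z|^2+|y_x|^2$. Using $z_x=i\la y_x-\la^{-\ell}f^3_x$ and $y_{xx}=i\la z-c(\cdot)v-\la^{-\ell}f^4$ one has
\[
\tfrac{d}{dx}\bigl(|z|^2+|y_x|^2\bigr)=2\Re\bigl(-i\la z\overline{y_x}-i\la y_x\overline{z}\bigr)-2c(\cdot)\Re(y_x\overline{v})+o(1),
\]
and since $z\overline{y_x}+y_x\overline{z}=2\Re(z\overline{y_x})$ is real, the leading $O(|\la|)$ term vanishes identically. What remains integrates to $O(1)$ using only $\|v\|_{L^2},\|y_x\|_{L^2}=O(1)$. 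This is exactly what the paper does: it multiplies $i\la y_x-z_x=\la^{-\ell}f^3_x$ by $2\chi\overline{z}$ and \eqref{p2-f4ps} by $2\chi\overline{y_x}$ with a $C^1$ weight $\chi$ satisfying $\chi(\beta-3\varepsilon)=-\chi(\gamma)=1$, then \emph{adds} the two identities so that the cross terms $\Re\{2i\la\int\chi y_x\overline{z}\}$ and $\Re\{2i\la\int\chi z\overline{y_x}\}$ cancel. The boundary contribution is then $|z(\gamma)|^2+|z(\beta-3\varepsilon)|^2+|y_x(\gamma)|^2+|y_x(\beta-3\varepsilon)|^2$, and the right-hand side is $O(1)$ using only the uniform $L^2$ bounds --- no appeal to Lemmas \ref{estimation-yx}--\ref{estimation-z} is needed at all.
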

\begin{proof}
	First, deriving Equation \eqref{p2-f1ps} with respect to $x$, we obtain
	\begin{equation*}\label{p2-4.33}
	i\la u_x -v_x =\la^{-\ell}f^1_x.
	\end{equation*}
	Multiplying the above equation by $2\chi \overline{v}$, integrating over $(\beta -3\varepsilon,\gamma )$, then taking the real part, we obtain 
	
	\begin{equation}\label{p2-4.3444}
	\Re\left\{2i\la \int_{\beta -3\varepsilon}^{\gamma }\chi u_x \overline{v}dx \right\}-\int_{\beta -3\varepsilon}^{\gamma}\chi (|v|^2)_x dx =\Re\left\{2 \la^{-\ell}\int_{\beta -3\varepsilon}^{\gamma }\chi f^1_x \overline{v}dx
	\right\}.
	\end{equation}Using integration by parts in  \eqref{p2-4.3444}, we obtain 
	\begin{equation}\label{p2-4.35}
	\left[-\chi  |v|^2 \right]_{\beta -3\varepsilon}^{\gamma } = -\int_{\beta -3\varepsilon}^{\gamma}\chi^{\prime}|v|^2 dx -\Re\left\{2i\la \int_{\beta -3\varepsilon}^{\gamma }\chi u_x \overline{v}dx \right\}+\Re\left\{2 \la^{-\ell}\int_{\beta -3\varepsilon}^{\gamma }\chi f^1_x \overline{v}dx
	\right\}.
	\end{equation}Using the definition of \ref{p2-chi} and Cauchy-Schwarz inequality in \eqref{p2-4.35}, we obtain 
	\begin{equation}\label{p2-4.366}
	\begin{array}{lll}
	\displaystyle	|v(\gamma )|^2 +|v(\beta -3\varepsilon)|^2 &\leq&\displaystyle  M_{\chi^{\prime}}\int_{\beta -3\varepsilon}^{\gamma}|v|^2 dx +2\abs{\la} M_{\chi}\left(\int_{\beta -3\varepsilon}^{\gamma}|u_x|^2 dx \right)^{\frac{1}{2}}\left(\int_{\beta -3\varepsilon}^{\gamma}|v|^2 dx \right)^{\frac{1}{2}}\vspace{0.25cm}\\&& \displaystyle +\, 	2\abs{\la}^{-\ell}M_{\chi}\left(\int_{\beta -3\varepsilon}^{\gamma}|f^1_x|^2  dx \right)^{\frac{1}{2}}\left(\int_{\beta -3\varepsilon}^{\gamma}|u_x|^2 dx \right)^{\frac{1}{2}}.
	\end{array}
	\end{equation}Thus, from \eqref{p2-4.366} and the fact that $u_x ,v $ are uniformly bounded in $L^2 (0,L)$ and $\|f^1_x \|_{L^2 (0,L)}=o(1)$, we obtain the first estimation in \eqref{p2-4.32}. From  \eqref{p2-f3ps}, we have 
\begin{equation*}
i\la y_x-z_x=\la^{-\ell}f^3.
\end{equation*}
Multiplying the above equation and \eqref{p2-f4ps} by $2\chi \overline{z}$ and $2\chi \overline{y_x }$ respectively, integrating over $(\beta -3\varepsilon ,\gamma )$, using the definition of \ref{p2-c}, taking the real part, then using the fact that $y_x ,z $ are uniformly bounded in $L^2 (0,L)$ and $\|f^2\|_{L^2 (0,L)}=o(1)$ and $\|f^3_x \|_{L^2 (0,L)}=o(1)$, we obtain 
\begin{equation}\label{p2-intbzbyxps}
	\Re\left\{2i\la \int_{\beta -3\varepsilon}^{\gamma }\chi y_x \overline{z}dx \right\}-\int_{\beta -3\varepsilon}^{\gamma }\chi(|z|^2 )_x dx =o(\la^{-\ell})\end{equation}and\begin{equation}\label{p2-intbzbyxps1}
	\Re\left\{2i\la \int_{\beta -3\varepsilon}^{\gamma }\chi z \overline{y_x }dx \right\}-\int_{\beta -3\varepsilon}^{\gamma }\chi (|y_x |^2 )_x dx -\Re\left\{2c_0  \int_{\beta -3\varepsilon}^{\gamma }\chi v \overline{y_x }dx \right\} =o(\la^{-\ell}).
	\end{equation}Adding  \eqref{p2-intbzbyxps} and \eqref{p2-intbzbyxps1}, then using integration by parts, we obtain 
	\begin{equation*}\label{p2-intbzbyxps3}
	\begin{array}{lll}
	\displaystyle	\left[ -\chi (|z|^2 +|y_x |^2 ) \right]_{\beta -3\varepsilon}^{\gamma } =\displaystyle -\,\int_{\beta -3\varepsilon}^{\gamma }\chi^{\prime}(|z|^2 +|y_x |^2 )dx +\Re\left\{2c_0 \int_{\beta -3\varepsilon}^{\gamma }\chi v\overline{y_x }dx \right\}+o(\la^{-\ell})
	\end{array}
	\end{equation*}Using the definition of \ref{p2-chi} and Cauchy-Schwarz inequality in the above equation, we obtain 
	\begin{equation}\label{p2-3.132}
	\begin{array}{lll}
	&&\displaystyle|z(\gamma )|^2 +|z(\beta -3\varepsilon)|^2 +|y_x (\gamma )|^2 +|y_x (\beta -3\varepsilon)|^2 \vspace{0.25cm}\\ &\leq&\displaystyle M_{\chi^{\prime}}\int_{\beta -3\varepsilon}^{\gamma} (|z|^2 +|y_x |^2 )dx+ 2c_0 M_{\chi }\left(\int_{\beta -3\varepsilon}^{\gamma} |v|^2 dx\right)^{\frac{1}{2}} \left(\int_{\beta -3\varepsilon}^{\gamma} |y_x |^2 dx\right)^{\frac{1}{2}}+o(\la^{-\ell}).
	\end{array}
	\end{equation}Finally, from \eqref{p2-3.132} and the fact that $v$, $y_x$, $z$ are uniformly bounded in $L^2 (0,L)$, we obtain the second estimation in \eqref{p2-4.32}. The proof is thus complete.
\end{proof}

\begin{lem}\label{p2-4thlemma}{\rm Let  $\theta \in C^1 ([0,L])$ be a function with $\theta(0)=\theta(L)=0$.  Under the  hypotheses \eqref{paper2-H},  the solution $U =(u ,v ,y ,z ,\omega (\cdot,s))^{\top}\in D(\AA)$ of  \eqref{p2-f1ps}-\eqref{p2-f5ps} satisfies the following estimation}
	
	\begin{equation}\label{p2-2hsn2hynxeqps}
	\begin{array}{lll}
\displaystyle 	\intdx \theta^{\prime}\left(|v |^2 +a^{-1}\left|S_{\tilde{b}(\cdot)}(u,\omega) \right|^2 +|z|^2+|y_x|^2 \right) dx+\Re\left\{2a^{-1}\intdx c(\cdot)\theta z\overline{S_{\tilde{b}(\cdot)}}(u,\omega)dx\right\}\vspace{0.25cm}\\\displaystyle -\,\Re\left\{2\intdx c(\cdot)\theta v \overline{
	y_x }dx\right\}=o\left(|\la|^{-\frac{\ell}{2}}\right).
	\end{array}
	\end{equation}
\end{lem}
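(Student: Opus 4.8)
The plan is to obtain \eqref{p2-2hsn2hynxeqps} as the sum of two multiplier identities: one produced by testing the $u$--equation \eqref{p2-f2ps} against $2a^{-1}\theta\,\overline{\Sbt}$, and one produced by testing the $z$--equation \eqref{p2-f4ps} against $2\theta\,\overline{y_x}$. The essential preliminary is a ``constitutive'' relation linking $\Sbt$ to $v_x$.

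To get that relation, differentiate \eqref{p2-f1ps} and \eqref{p2-f5ps} in $x$: $i\la u_x=v_x+\la^{-\ell}f^1_x$ and $i\la\omega_x(\cdot,s)=v_x-\omega_{sx}(\cdot,s)+\la^{-\ell}f^5_x(\cdot,s)$. Multiply the latter by $g(s)$, integrate over $s\in(0,\infty)$, and integrate by parts in $s$ using $\omega(\cdot,0)=0$ together with \eqref{2ast} (which kills the boundary term at $s=\infty$, as in the proof of Proposition \ref{p2-mdissip}); since $\widetilde{g}=\int_0^\infty g$, this gives $i\la\int_0^\infty g(s)\omega_x(\cdot,s)ds=\widetilde{g}\,v_x+\int_0^\infty g'(s)\omega_x(\cdot,s)ds+\la^{-\ell}\int_0^\infty g(s)f^5_x(\cdot,s)ds$. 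Multiplying the $u_x$--relation by $\widetilde{b}(\cdot)$, the $\omega$--relation by $b(\cdot)$, adding, and using $\widetilde{b}(\cdot)+b(\cdot)\widetilde{g}=a$ on all of $(0,L)$ (valid across $x=\beta$ by \eqref{p2-btilde}), we get
\[
i\la\,\Sbt = a\,v_x + R,\qquad R:=b(\cdot)\!\int_0^\infty\! g'(s)\omega_x(\cdot,s)ds+\la^{-\ell}\Big(\widetilde{b}(\cdot)f^1_x+b(\cdot)\!\int_0^\infty\! g(s)f^5_x(\cdot,s)ds\Big).
\]
By Cauchy--Schwarz, \eqref{p2-4.8} and $\|F\|_{\HH}=o(1)$, one has $\|R\|_{L^2(0,L)}=o(|\la|^{-\ell/2})$; moreover \eqref{p2-4.16} and the uniform bound on $u_x$ give $\|\Sbt\|_{L^2(0,L)}=O(1)$, while $U\in D(\AA)$ gives $\Sbt\in H^1(0,L)\hookrightarrow C^0([0,L])$.

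Now test \eqref{p2-f2ps} against $2a^{-1}\theta\,\overline{\Sbt}$ and integrate over $(0,L)$: the term $-2a^{-1}\int_0^L\theta(\Sbt)_x\overline{\Sbt}dx$ integrates by parts to $a^{-1}\int_0^L\theta'|\Sbt|^2dx$ (no boundary contribution since $\theta(0)=\theta(L)=0$ and $\Sbt\in C^0$); in $2a^{-1}i\la\int_0^L\theta v\overline{\Sbt}dx$, substitute the conjugate of the constitutive relation, $i\la\overline{\Sbt}=-a\overline{v_x}-\overline{R}$, and integrate by parts to get $\int_0^L\theta'|v|^2dx+o(|\la|^{-\ell/2})$; the source term is $o(|\la|^{-\ell})$ because $\|f^2\|_{L^2}=o(1)$ and $\|\Sbt\|_{L^2}=O(1)$. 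Taking real parts yields $\int_0^L\theta'(|v|^2+a^{-1}|\Sbt|^2)dx+\Re\{2a^{-1}\int_0^L c(\cdot)\theta z\overline{\Sbt}dx\}=o(|\la|^{-\ell/2})$. Analogously, test \eqref{p2-f4ps} against $2\theta\overline{y_x}$: $-2\int_0^L\theta y_{xx}\overline{y_x}dx$ becomes $\int_0^L\theta'|y_x|^2dx$ after integration by parts, and substituting $i\la\overline{y_x}=-\overline{z_x}-\la^{-\ell}\overline{f^3_x}$ (from \eqref{p2-f3ps}) into $2i\la\int_0^L\theta z\overline{y_x}dx$ gives $\int_0^L\theta'|z|^2dx+o(|\la|^{-\ell})$; using $\|f^4\|_{L^2}=o(1)$ and that $y_x,z$ are uniformly bounded we obtain $\int_0^L\theta'(|z|^2+|y_x|^2)dx-\Re\{2\int_0^L c(\cdot)\theta v\overline{y_x}dx\}=o(|\la|^{-\ell})$. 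Adding the two identities and using $\ell\geq0$ (so $o(|\la|^{-\ell})=o(|\la|^{-\ell/2})$) gives exactly \eqref{p2-2hsn2hynxeqps}.

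The main obstacle is the constitutive relation: one must handle the history integral inside $\Sbt$ carefully — justify the $s$--integration by parts and the vanishing of the boundary term at $s=\infty$ (relying on \eqref{2ast} and $\omega_s(\cdot,s)\in\mathcal{W}_g$), exploit the cancellation $\widetilde{b}(\cdot)+b(\cdot)\widetilde{g}=a$ across the interface $x=\beta$ in spite of the discontinuity of $b$, and control $R$ in $L^2(0,L)$ at the sharp order $o(|\la|^{-\ell/2})$ via the dissipation estimates \eqref{p2-4.8}. The remaining work — tracking the exponents $|\la|^{-\ell}$ versus $|\la|^{-\ell/2}$ in the error terms and checking that every spatial boundary term vanishes because $\theta(0)=\theta(L)=0$ — is routine.
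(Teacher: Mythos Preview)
Your proposal is correct and follows essentially the same approach as the paper: test \eqref{p2-f2ps} with $2a^{-1}\theta\,\overline{\Sbt}$ and \eqref{p2-f4ps} with $2\theta\,\overline{y_x}$, use the relations from \eqref{p2-f1ps}, \eqref{p2-f3ps}, \eqref{p2-f5ps} together with an $s$--integration by parts and \eqref{p2-4.8} to control the history terms, and add. The only difference is organizational: you package the identity $i\la\,\Sbt=a\,v_x+R$ (with $\|R\|_{L^2}=o(|\la|^{-\ell/2})$) upfront, whereas the paper derives the same cancellation $\widetilde{b}(\cdot)+b(\cdot)\widetilde{g}=a$ step by step inside the computation; the content is identical.
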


\begin{proof}
	First, from  \eqref{p2-f1ps}, we deduce that 
	\begin{eqnarray}
	i\la \overline{u_x }=-\overline{v_x }-\la^{-\ell}\overline{f^{1}_x },\label{p2-1barx}
	\end{eqnarray}
	Multiplying \eqref{p2-f2ps} by $2a^{-1}\theta \overline{S_{\tilde{b}(\cdot)}}(u,\omega)$, integrating over $(0,L)$, taking the real part, then using \eqref{p2-4.16} and the fact that $\|f^2 \|_{L^2 (0,L)}=o(1)$, we get 
		\begin{equation}\label{p2-inthsn}
		\begin{array}{lll}
	\displaystyle	\Re\left\{2i\la a^{-1} \intdx \theta v\overline{S_{\tilde{b}(\cdot)}}(u,\omega) dx\right\}-a^{-1}\intdx \theta\left(\left|S_{\tilde{b}(\cdot)}(u,\omega)\right|^2 \right)_x dx  \vspace{0.25cm}\\\displaystyle +\,\Re\left\{2a^{-1}\intdx c(\cdot)\theta z\overline{S_{\tilde{b}(\cdot)}}(u,\omega)dx\right\}=o(|\la|^{-\ell}).
	\end{array}
		\end{equation}
		See the definition of \ref{p2-S}, then inserting \eqref{p2-1barx} in the first term of the above equation, we get 
	\begin{equation}\label{p2-3.77}
		\begin{array}{lll}
		\displaystyle-\,\intdx \theta\left(|v |^2 +a^{-1}\left|S_{\tilde{b}(\cdot)}(u,\omega) \right|^2 \right)_x dx+\Re\left\{2a^{-1}\intdx c(\cdot)\theta z\overline{S_{\tilde{b}(\cdot)}}(u,\omega)dx\right\}\vspace{0.25cm}\\ \displaystyle = -\,\Re\left\{2a^{-1}\widetilde{g}\intdx \theta b(\cdot)\overline{v_x }vdx \right\} -\Re\left\{2i\la a^{-1}\intdx \theta b(\cdot)v \intdm g(s)\overline{\omega_x }(\cdot,s)dsdx \right\}+o\left(|\la|^{-\ell}\right). 
		\end{array}
		\end{equation}
		From  \eqref{p2-f5ps}, we deduce that
		\begin{equation}\label{p2-4.555}
			i\la \overline{\omega_x}(\cdot,s)=\overline{\omega_{xs}}(\cdot,s)-\overline{v_x}-\la^{-\ell}\overline{f^5_x}(\cdot,s).
		\end{equation}
		Inserting \eqref{p2-4.555} in the right hand side of \eqref{p2-3.77}, then using integration by parts with respect to $s$ with the help of hypotheses \eqref{paper2-H} and the fact that $\omega(\cdot,0)=0$, we get 
			{\small\begin{equation}\label{p2-4.566}
		\begin{array}{lll}
		\displaystyle-\,\intdx \theta\left(|v |^2 +a^{-1}\left|S_{\tilde{b}(\cdot)}(u,\omega) \right|^2 \right)_x dx+\Re\left\{2a^{-1}\intdx c(\cdot)\theta z\overline{S_{\tilde{b}(\cdot)}}(u,\omega)dx\right\}\vspace{0.25cm}\\ \displaystyle = -\,\Re\left\{2a^{-1}b_0\intdnb \theta v\intdm -g^{\prime}(s)\overline{\omega_x}(\cdot,s) dsdx \right\} -\Re\left\{2 a^{-1}\la^{-\ell}b_0 \intdnb \theta v \intdm g(s)\overline{f^5_x }(\cdot,s)dsdx \right\}+o\left(|\la|^{-\ell}\right). 
		\end{array}
		\end{equation}}Using Cauchy-Schwarz inequality, the fact that $v$ is uniformly bounded in $L^2 (0,L)$, the definition of $g$ and \eqref{p2-4.8}, we obtain 
	\begin{equation}\label{p2-4.57}
\left\{	\begin{array}{lll}
	\displaystyle  -\, \Re\left\{2a^{-1}b_0\intdnb \theta v\intdm -g^{\prime}(s)\overline{\omega_x}(\cdot,s) dsdx \right\}=o\left((|\la|^{-\frac{\ell}{2}}\right),\vspace{0.25cm}\\\displaystyle \Re\left\{2 a^{-1}\la^{-\ell}b_0 \intdnb \theta v \intdm g(s)\overline{f^5_x }(\cdot,s)dsdx \right\}=o\left(|\la|^{-\ell}\right).
	\end{array}\right.
	\end{equation}
	Inserting \eqref{p2-4.57} in \eqref{p2-4.566}, then using integration by parts and the fact that $\theta(0)=\theta(L)=0$, we obtain
	\begin{equation}\label{p2-4.58}
		\intdx \theta^{\prime}\left(|v |^2 +a^{-1}\left|S_{\tilde{b}(\cdot)}(u,\omega) \right|^2 \right) dx+\Re\left\{2a^{-1}\intdx c(\cdot)\theta z\overline{S_{\tilde{b}(\cdot)}}(u,\omega)dx\right\}=o\left(|\la|^{-\frac{\ell}{2}}\right).
	\end{equation}
Next, multiplying  \eqref{p2-f4ps} by $2h\overline{y_x }$, integrating over $(0,L)$, taking the real part, then using the fact that $y_x $ is uniformly bounded in $L^2 (0,L)$ and $\|f^4 \|_{L^2 (0,L)}=o(1)$, we obtain
	\begin{equation}\label{p2-inthynx}
	\Re\left\{ 2i\la \intdx \theta z \overline{y_x  }dx\right\}-\intdx \theta(|y_x  |^2 )_x dx -\Re\left\{2\intdx c(\cdot)\theta v \overline{
		y_x }dx\right\}=o(|\la|^{-\ell}).
	\end{equation}From  \eqref{p2-f3ps}, we deduce that 
	\begin{equation}\label{p2-4.377}
	i\la	\overline{y_x }=-\overline{z_x }-\la^{-\ell}\overline{f^3_x }.
	\end{equation}Inserting \eqref{p2-4.377} in \eqref{p2-inthynx}, then using the fact that $z$ is uniformly bounded in $L^2 (0,L)$ and $\|f^3_x \|_{L^2 (0,L)}=o(1)$, we obtain 	
	\begin{equation}\label{p2-inthynxsub}
	-\intdx \theta(|z |^2 +|y_x |^2  )_x dx -\Re\left\{2\intdx c(\cdot)\theta v \overline{y_x }dx\right\}=	o(|\la|^{-\ell}).
	\end{equation}
	Using integration by parts in \eqref{p2-inthynxsub} and the fact that $\theta(0)=\theta(L)=0$, we obtain
	\begin{equation}\label{p2-inthynxsub1}
	\intdx \theta^{\prime} (|z |^2 +|y_x |^2  )_x dx -\Re\left\{2\intdx c(\cdot)\theta v \overline{
		y_x }dx\right\}=	o(|\la|^{-\ell}).
	\end{equation}
	Finally,   adding  \eqref{p2-4.58} and \eqref{p2-inthynxsub1}, we obtain the desired estimation \eqref{p2-2hsn2hynxeqps}. The proof is thus complete.
\end{proof}\\\linebreak
Let $0<\varepsilon<\min\left(\alpha,\frac{\beta-\alpha}{5}\right)$, we fix  cut-off functions $h_4 ,h_5  \in C^1 ([0,L])$ (see Figure \ref{p2-Fig2}) such that $0\leq h_4 (x)\leq 1$, $0\leq h_5 (x)\leq 1$, for all $x\in[0,L]$ and
\begin{equation*}
	h_4 (x)= 	\left \{ \begin{array}{lll}
	1 & \text{if} \quad \,\, x \in [0,\alpha +2\varepsilon],&\vspace{0.1cm}\\
	0 &\text{if } \quad x \in [\beta -3\varepsilon,L],&
	\end{array}	\right.\text{and}\quad h_5(x)= 	\left \{ \begin{array}{lll}
	0 & \text{if} \quad \,\, x \in [0,\alpha +2\varepsilon],&\vspace{0.1cm}\\
	1 &\text{if } \quad x \in [\beta -3\varepsilon,L],&
	
	\end{array}	\right. 
	\end{equation*}
\begin{figure}[h]	
\begin{center}
	\begin{tikzpicture}
\draw[->](1,0)--(8,0);
\draw[->](1,0)--(1,4);

\draw[dashed](1,0)--(1,1);
\draw[dashed](5,0)--(5,1);
\draw[dashed](2,0)--(2,1.5);
\draw[dashed](6,0)--(6,1.5);

\node[black,below] at (1,0){\scalebox{0.75}{$0$}};
\node at (1,0) [circle, scale=0.3, draw=black!80,fill=black!80] {};

\node[black,below] at (2,0){\scalebox{0.75}{$\alpha$}};
\node at (2,0) [circle, scale=0.3, draw=black!80,fill=black!80] {};

\node[black,below] at (3,0){\scalebox{0.75}{$\alpha +2\varepsilon$}};
\node at (3,0) [circle, scale=0.3, draw=black!80,fill=black!80] {};

\node[black,below] at (4,0){\scalebox{0.75}{$\beta -3\varepsilon$}};
\node at (4,0) [circle, scale=0.3, draw=black!80,fill=black!80] {};

\node[black,below] at (5,0){\scalebox{0.75}{$\beta$}};
\node at (5,0) [circle, scale=0.3, draw=black!80,fill=black!80] {};

\node[black,below] at (6,0){\scalebox{0.75}{$\gamma$}};
\node at (6,0) [circle, scale=0.3, draw=black!80,fill=black!80] {};

\node[black,below] at (7,0){\scalebox{0.75}{$L$}};
\node at (7,0) [circle, scale=0.3, draw=black!80,fill=black!80] {};


\node at (1,1) [circle, scale=0.3, draw=black!80,fill=black!80] {};
\node at (1,1.5) [circle, scale=0.3, draw=black!80,fill=black!80] {};
\node at (1,2) [circle, scale=0.3, draw=black!80,fill=black!80] {};

\node[black,left] at (1,1.5){\scalebox{0.75}{$c_0$}};
\node[black,left] at (1,1){\scalebox{0.75}{$b_0$}};
\node[black,left] at (1,2){\scalebox{0.75}{$1$}};

\node[black,right] at (8.5,4){\scalebox{0.75}{$h_4$}};
\node[black,right] at (8.5,3.5){\scalebox{0.75}{$h_5$}};
\node[black,right] at (8.5,3){\scalebox{0.75}{$b(x)$}};
\node[black,right] at (8.5,2.5){\scalebox{0.75}{$c(x)$}};

\draw[-,red](1,2)--(3,2);
\draw [red] (3,2) to[out=0.40,in=180] (4,0) ;
\draw[-,red](4,0)--(7,0);

\draw[-,blue](1,0)--(3,0);
\draw [blue] (3,0) to[out=0.40,in=180] (4,2) ;
\draw[-,blue](4,2)--(7,2);

\draw[-,green](1,1)--(5,1);
\draw[-,green](5,0.011)--(7,0.011);

\draw[-,gray](1,-0.011)--(2,-0.011);
\draw[-,gray](2,1.5)--(6,1.5);
\draw[-,gray](6,-0.011)--(7,-0.011);

\draw[-,red](8,4)--(8.5,4);
\draw[-,blue](8,3.5)--(8.5,3.5);
\draw[-,green](8,3)--(8.5,3);
\draw[-,gray](8,2.5)--(8.5,2.5);
\end{tikzpicture}
\end{center}
\caption{Geometric description of the functions $h_4$ and $h_5$.}\label{p2-Fig2}
\end{figure}
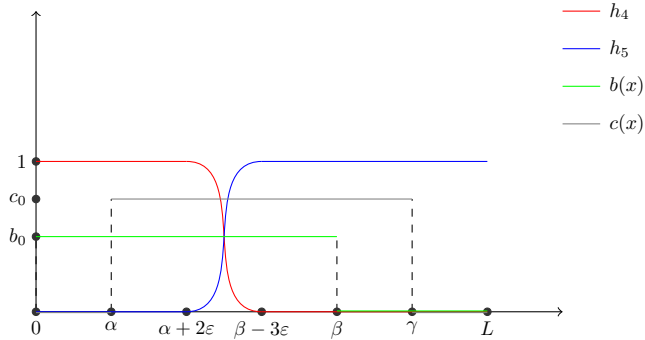
and set
$\displaystyle{\max_{x\in[0,L]}|h^{\prime}_4 (x)|=M_{h^{\prime}_4} \ \ \text{and}\ \ \max_{x\in[0,L]}|h^{\prime}_5(x)|=M_{h^{\prime}_5}.}$

\begin{lem}\label{p2-5thlemma}
	\rm{Let $0<\varepsilon<\min\left(\alpha,\frac{\beta-\alpha}{5}\right)$. Under the  hypotheses \eqref{paper2-H},  the solution $U =(u ,v ,y ,z ,\omega (\cdot, s) )^{\top}\in D(\AA)$ of  System \eqref{p2-f1ps}-\eqref{p2-f5ps} satisfies the following estimations}
	\begin{equation}\label{p2-resulycutoff3}
 \int_{0}^{\alpha +2\varepsilon}\left(|v|^2+|y_x|^2+|z|^2\right) dx \leq K_1\,|a-1||\la|\int_{\alpha}^{\beta-\varepsilon}|u_x||y_x|dx+o(1),
	\end{equation}
\begin{equation}\label{p2-resulycutoff4}
 a\int_{\beta}^{L }|u_x|^2 dx +\int_{\beta-3\varepsilon}^{L}\left( |v|^2 +|y_x|^2+|z|^2 \right)dx\leq\displaystyle K_2\,|a-1||\la|\int_{\alpha}^{\beta-\varepsilon}|u_x||y_x|dx +o(1),
\end{equation}
where $K_1=4(1+(\beta-3\varepsilon)M_{h_4'})$ and $K_2=4(1+(L-\alpha+2\varepsilon)M_{h_5'})$.
\end{lem}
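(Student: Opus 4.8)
The plan is to apply the piecewise multiplier identity of Lemma \ref{p2-4thlemma} with two carefully chosen cut-off multipliers $\theta = h_4$ and $\theta = h_5$, and then to combine the resulting identities with the interior estimates already available. For \eqref{p2-resulycutoff3}, I would take $\theta = h_4$ in \eqref{p2-2hsn2hynxeqps}. Since $h_4 \equiv 1$ on $[0,\alpha+2\varepsilon]$ and $h_4 \equiv 0$ on $[\beta-3\varepsilon,L]$, the derivative $h_4'$ is supported in $[\alpha+2\varepsilon,\beta-3\varepsilon]$, a region contained in $(\alpha,\beta-\varepsilon)$. On the support of $h_4'$ the coupling coefficient $c(\cdot)=c_0$, and on $(0,\alpha)$ we have $c(\cdot)=0$; moreover on $[0,\beta)$, $S_{\tilde b(\cdot)}(u,\omega)=S_{\widetilde{b_0}}(u,\omega)$ and $\widetilde b(\cdot)=\widetilde{b_0}$. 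Hence the first term of \eqref{p2-2hsn2hynxeqps} gives
$$
\int_0^{\alpha+2\varepsilon}\left(|v|^2+a^{-1}|S_{\widetilde{b_0}}(u,\omega)|^2+|z|^2+|y_x|^2\right)dx
= -\int_{\alpha+2\varepsilon}^{\beta-3\varepsilon} h_4'\left(|v|^2+a^{-1}|S_{\widetilde{b_0}}(u,\omega)|^2+|z|^2+|y_x|^2\right)dx + \text{(coupling terms)} + o(1).
$$
On $(0,\beta)$ we have $\widetilde{b_0}|u_x|^2 \le a^{-1}\cdot 2\big(|S_{\widetilde{b_0}}(u,\omega)|^2 + b_0^2(\int g(s)|\omega_x|ds)^2\big)$-type bounds, but more simply I would keep $|v|^2+|y_x|^2+|z|^2$ on the left after noting $a^{-1}|S_{\widetilde{b_0}}(u,\omega)|^2\ge 0$ is dropped only where helpful; the honest route is to bound the right-hand side. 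On $[\alpha+2\varepsilon,\beta-3\varepsilon]$ the terms $\int_{\alpha}^{\beta-\varepsilon}|y_x|^2$ and $\int_{\alpha}^{\beta-\varepsilon}|S_{\widetilde{b_0}}(u,\omega)|^2$ are controlled: the latter is $o(|\la|^{-\ell})=o(1)$ by \eqref{p2-4.16}, the former is bounded via \eqref{estimation-yx1} by $|a-1||\la|\int_\alpha^{\beta-\varepsilon}|u_x||y_x|dx+o(1)$, and $\int_{\alpha+2\varepsilon}^{\beta-3\varepsilon}|z|^2$ by \eqref{estimation-z1} by $3|a-1||\la|\int_\alpha^{\beta-\varepsilon}|u_x||y_x|dx+o(1)$, and $\int_{\varepsilon}^{\beta-\varepsilon}|v|^2 = o(|\la|^{-\ell/2})=o(1)$ by \eqref{estimation-v1}. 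The coupling terms $\Re\{2a^{-1}\int c(\cdot)h_4 z\overline{S_{\widetilde{b_0}}}(u,\omega)\}$ and $\Re\{2\int c(\cdot)h_4 v\overline{y_x}\}$ are estimated by Cauchy--Schwarz using \eqref{p2-4.16}, \eqref{estimation-v1} and uniform boundedness of $z,y_x$, giving $o(1)$. Collecting, and absorbing $\int_0^{\alpha+2\varepsilon}a^{-1}|S_{\widetilde{b_0}}(u,\omega)|^2\ge 0$ harmlessly on the left, one obtains \eqref{p2-resulycutoff3} with $K_1 = 4(1+(\beta-3\varepsilon)M_{h_4'})$, where the factor $4$ accumulates the constants $1$ from $|y_x|^2$, $3$ from $|z|^2$, and the $(\beta-3\varepsilon)M_{h_4'}$ from bounding $\int h_4'$ against $\|h_4'\|_\infty$ times the length of its support.

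For \eqref{p2-resulycutoff4} I would take $\theta = h_5$, which equals $0$ on $[0,\alpha+2\varepsilon]$ and $1$ on $[\beta-3\varepsilon,L]$, so $h_5'$ is supported in $[\alpha+2\varepsilon,\beta-3\varepsilon]$ again. The identity \eqref{p2-2hsn2hynxeqps} with $\theta=h_5$ reads
$$
\int_{\beta-3\varepsilon}^{L}\left(|v|^2+a^{-1}|S_{\tilde b(\cdot)}(u,\omega)|^2+|z|^2+|y_x|^2\right)dx
= -\int_{\alpha+2\varepsilon}^{\beta-3\varepsilon} h_5'\left(|v|^2+a^{-1}|S_{\widetilde{b_0}}(u,\omega)|^2+|z|^2+|y_x|^2\right)dx + \text{(coupling)} + o(1),
$$
using that on $[\beta-3\varepsilon,L]$ the term $c(\cdot)$ is supported only in $(\beta-3\varepsilon,\gamma)$ where the coupling integrals are again handled by Cauchy--Schwarz with the interior $v$-estimate and \eqref{p2-4.16}. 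The crucial observation is that on $[\beta,L]$ one has $S_{\tilde b(\cdot)}(u,\omega)=a u_x$ (definition \ref{p2-S}), so $a^{-1}|S_{\tilde b(\cdot)}(u,\omega)|^2 = a|u_x|^2$ there, which produces the term $a\int_\beta^L |u_x|^2dx$ on the left-hand side. The right-hand side is again bounded, using \eqref{estimation-yx1}, \eqref{estimation-z1}, \eqref{estimation-v1} and \eqref{p2-4.16} on $[\alpha+2\varepsilon,\beta-3\varepsilon]\subset(\alpha,\beta-\varepsilon)$, by $4|a-1||\la|\int_\alpha^{\beta-\varepsilon}|u_x||y_x|dx+o(1)$ times $\|h_5'\|_\infty$ over its support of length at most $L-\alpha+2\varepsilon$ (using this crude length bound gives the stated constant), and the coupling terms contribute $o(1)$. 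This yields \eqref{p2-resulycutoff4} with $K_2=4(1+(L-\alpha+2\varepsilon)M_{h_5'})$ after again discarding the non-negative term $a^{-1}\int_{\beta-3\varepsilon}^{\beta}|S_{\widetilde{b_0}}(u,\omega)|^2$ on the left.

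The main obstacle I anticipate is the bookkeeping of the coupling terms $\Re\{2a^{-1}\int c(\cdot)\theta z\overline{S_{\tilde b(\cdot)}}(u,\omega)dx\}$ and $\Re\{2\int c(\cdot)\theta v\overline{y_x}dx\}$: the factor $c(\cdot)$ is supported on $(\alpha,\gamma)$, which straddles the point $\beta$, so in the $h_5$ computation these integrals live partly in the "known" region $(\alpha,\beta-\varepsilon)$ (where $v$ is small by \eqref{estimation-v1} and $S_{\widetilde{b_0}}(u,\omega)$ is small by \eqref{p2-4.16}) and partly in $(\beta-3\varepsilon,\gamma)$ where neither $v$ nor $u_x$ is yet known to be small — there one must simply use uniform $L^2(0,L)$-boundedness of each factor, which only gives $O(1)$, not $o(1)$. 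Tracking that the $O(1)$ contributions from this region can be absorbed into the $o(1)$ tolerance (because, e.g., $\int c(\cdot)h_5 z\overline{S_{\tilde b(\cdot)}}$ over $(\beta-3\varepsilon,\gamma)$ actually involves $z$ and $a u_x$ which are genuinely $O(1)$ and must be moved to the left or shown to cancel against analogous terms) requires care; the resolution is that on $(\beta-3\varepsilon,\gamma)$ the multiplier $h_5\equiv 1$ makes these terms part of an exact-derivative structure already absorbed into the boundary evaluations $[h_5(\cdot)(\cdots)]$ which vanish at $0$ and $L$, so no uncontrolled $O(1)$ actually survives. Verifying this cancellation explicitly, rather than crudely estimating, is the delicate point.
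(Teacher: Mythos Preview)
Your proposal has two genuine gaps.

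\textbf{Wrong multipliers.} You apply Lemma~\ref{p2-4thlemma} with $\theta=h_4$ and $\theta=h_5$, but neither satisfies the hypothesis $\theta(0)=\theta(L)=0$ of that lemma (indeed $h_4(0)=1$ and $h_5(L)=1$), so the identity \eqref{p2-2hsn2hynxeqps} is not available. More fundamentally, even if the boundary issue were patched, since $h_4'\equiv 0$ on $[0,\alpha+2\varepsilon]$ the term $\int_0^L h_4'(\,|v|^2+\cdots\,)dx$ contributes nothing on $[0,\alpha+2\varepsilon]$; the integral $\int_0^{\alpha+2\varepsilon}(|v|^2+|y_x|^2+|z|^2)dx$ that you want on the left simply never appears. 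The paper instead takes $\theta=xh_4$ and $\theta=(x-L)h_5$: these vanish at both endpoints, and $\theta'=h_4+xh_4'$ equals $1$ on $[0,\alpha+2\varepsilon]$ (respectively $\theta'=h_5+(x-L)h_5'$ equals $1$ on $[\beta-3\varepsilon,L]$), which is exactly what yields the desired left-hand sides. The constants $K_1,K_2$ in the statement arise from bounding $|h_4+xh_4'|\le 1+(\beta-3\varepsilon)M_{h_4'}$ on the transition zone, not from the mechanism you describe.

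\textbf{Coupling terms over $(\beta-3\varepsilon,\gamma)$.} Your proposed resolution --- that these terms are ``part of an exact-derivative structure already absorbed into the boundary evaluations'' --- is incorrect: the cross terms $\Re\{2a^{-1}\int c(\cdot)\theta z\,\overline{S_{\tilde b(\cdot)}}(u,\omega)\,dx\}$ and $\Re\{2\int c(\cdot)\theta v\,\overline{y_x}\,dx\}$ are not total derivatives and do not vanish with the endpoint values of $\theta$. In the paper, after writing $S_{\tilde b(\cdot)}(u,\omega)=au_x+b(\cdot)(-\tilde g u_x+\int g\,\omega_x)$ and discarding the small $b(\cdot)$-part on $(\beta-3\varepsilon,\beta)$, one is left with a genuine $O(1)$ quantity
\[
\mathcal{I}=\Re\Big\{2c_0\int_{\beta-3\varepsilon}^{\gamma}(x-L)v\,\overline{y_x}\,dx\Big\}-\Re\Big\{2c_0\int_{\beta-3\varepsilon}^{\gamma}(x-L)z\,\overline{u_x}\,dx\Big\}.
\]
This is shown to be $o(1)$ by substituting $\overline{u_x}=i\lambda^{-1}\overline{v_x}+o$, $\overline{y_x}=i\lambda^{-1}\overline{z_x}+o$ from \eqref{p2-f1ps}--\eqref{p2-f3ps}, integrating by parts in $x$, and controlling the resulting boundary terms at $x=\beta-3\varepsilon$ and $x=\gamma$ via the pointwise estimates of Lemma~\ref{p2-3rdlemma}. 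Your outline never invokes Lemma~\ref{p2-3rdlemma}, yet it is precisely the tool that closes this step.
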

\begin{proof}
First, using the result of Lemma \ref{p2-4thlemma} with $\theta =xh_4 $ and the definition of \ref{p2-S} and \ref{p2-c}, we obtain 
	\begin{equation*}
	\begin{array}{lll}
	&& \displaystyle\int_{0}^{\alpha +2\varepsilon}\left(|v|^2+|y_x|^2 + |z|^2 \right)dx=-\,a^{-1}\int_{0 }^{\alpha +2\varepsilon}\left|S_{\widetilde{b_0}}(u,\omega)\right|^{2}dx  \vspace{0.25cm}\\&&\displaystyle -\,\int_{\alpha +2\varepsilon}^{\beta -3\varepsilon}\left(h_4+x h_4^{\prime}\right) \left(|v|^2 +a^{-1}\left|S_{\widetilde{b_0}}(u,\omega)\right|^{2}+|y_x|^2+|z|^2 \right)dx\vspace{0.25cm}\\&&\displaystyle -\, \Re\left\{2a^{-1}c_0 \int_{\alpha }^{\beta -3\varepsilon}xh_4 z \overline{S_{\widetilde{b_0}}}(u,\omega)dx \right\} +\Re\left\{2c_0 \int_{\alpha}^{\beta -3\varepsilon}xh_4 v\overline{y_x}dx  \right\}+o\left(\abs{\la}^{-\frac{\ell}{2}}\right).
	\end{array}
	\end{equation*}Using Cauchy-Schwarz inequality in the above equation, we obtain 
		\begin{equation*}
			\begin{array}{lll}
	&& \displaystyle \int_{0}^{\alpha +2\varepsilon}\left(|v|^2+|y_x|^2 + |z|^2 \right)dx\leq  a^{-1}\int_{0 }^{\alpha +2\varepsilon}\left|S_{\widetilde{b_0}}(u,\omega)\right|^{2}dx  \vspace{0.25cm}\\& &\displaystyle+\, \left(1+(\beta-3\varepsilon)M_{h^{\prime}_4}\right)\int_{\alpha +2\varepsilon}^{\beta -3\varepsilon} \left(|v|^2 +a^{-1}\left|S_{\widetilde{b_0}}(u,\omega)\right|^{2}+|y_x|^2+|z|^2 \right)dx\vspace{0.25cm}\\&&\displaystyle +\,  2c_0 (\beta -3\varepsilon)a^{-1} \left(\int_{\alpha}^{\beta-3\varepsilon}|z|^2dx \right)^{\frac{1}{2}}\left(\int_{\alpha}^{\beta-3\varepsilon}|S_{\widetilde{b_0}}(u,\omega)|^2dx\right)^{\frac{1}{2}}\vspace{0.25cm}\\&&\displaystyle +\,2c_0 (\beta -3\varepsilon) \left(\int_{\alpha}^{\beta-3\varepsilon}|v|^2dx \right)^{\frac{1}{2}}\left(\int_{\alpha}^{\beta-3\varepsilon}|y_x|^2dx\right)^{\frac{1}{2}}.
	\end{array}
	\end{equation*}
Thus, from the above inequality, Lemmas \ref{p2-1stlemps}-\ref{estimation-z} and the fact that $y_x, z$  are uniformly bounded in $L^2 (0,L)$, we obtain \eqref{p2-resulycutoff3}.
	Next, using the result of Lemma \ref{p2-4thlemma} with $\theta=(x-L)h_5 $ and the definition of \ref{p2-S} and \ref{p2-c}, we obtain 
	
		\begin{equation*}
	\begin{array}{lll}
	&& \displaystyle a\int_{\beta}^{L }|u_x|^2 dx +\int_{\beta-3\varepsilon}^{L}\left( |v|^2 +|z|^2+|y_x|^2 \right)dx=-a^{-1}\int_{\beta-3\varepsilon }^{\beta }\left|S_{\widetilde{b_0}}(u,\omega)\right|^{2}dx\vspace{0.25cm}\\&&\displaystyle -\,\int_{\alpha +2\varepsilon}^{\beta -3\varepsilon}\left(h_5+(x-L) h_5^{\prime}\right) \left(|v|^2 +a^{-1}\left|S_{\widetilde{b_0}}(u,\omega)\right|^{2}+|y_x|^2+|z|^2 \right)dx\vspace{0.25cm}\\&&\displaystyle -\, \Re\left\{2a^{-1}c_0 \int_{\alpha +2\varepsilon}^{\beta -3\varepsilon}(x-L)h_5 z \overline{S_{\widetilde{b_0}}}(u,\omega)dx \right\} +\Re\left\{2c_0 \int_{\alpha+2\varepsilon}^{\beta -3\varepsilon}(x-L)h_5 v\overline{y_x}dx  \right\}\vspace{0.25cm}\\  &&\displaystyle -\, \Re\left\{2a^{-1}b_0 c_0 \int_{\beta-3\varepsilon}^{\beta }(x-L)z\left(-\widetilde{g}u_x +\intdm g(s)\overline{\omega_x}(\cdot,s)ds\right)dx  \right\}\vspace{0.25cm}\\&&\displaystyle  -\, \Re\left\{2c_0 \int_{\beta-3\varepsilon}^{\gamma}(x-L) z \overline{u_x}dx \right\} +\Re\left\{2c_0 \int_{\beta-3\varepsilon}^{\gamma }(x-L) v\overline{y_x}dx  \right\}.
	\end{array}
	\end{equation*}
	Using Cauchy-Schwarz inequality in the above equation, Lemmas \ref{p2-1stlemps}-\ref{estimation-z} and the fact that $y_x ,z$ are uniformly bounded in $L^2(0,L)$, we obtain 
\begin{equation}\label{p2-4.64}
\begin{array}{lll}
&& \displaystyle a\int_{\beta}^{L }|u_x|^2 dx +\int_{\beta-3\varepsilon}^{L}\left( |v|^2 +|z|^2+|y_x|^2 \right)dx\vspace{0.25cm}\\&\leq&\displaystyle 4\left(1+(L-\alpha -2\varepsilon)M_{h^{\prime}_5}\right)|a-1||\la|\int_{\alpha}^{\beta-\varepsilon}|u_x||y_x|dx +\mathcal{I}+o(1).
\end{array}
\end{equation}
where
\begin{equation}\label{p2-4.65}
	\mathcal{I}:= \displaystyle \Re\left\{2c_0 \int_{\beta-3\varepsilon}^{\gamma }(x-L) v\overline{y_x}dx  \right\} - \Re\left\{2c_0 \int_{\beta-3\varepsilon}^{\gamma}(x-L) z \overline{u_x}dx \right\} 
\end{equation}
From \eqref{p2-f1ps} and \eqref{p2-f3ps}, we have 
	\begin{equation}\label{p2-4.52}
	\overline{u_x }=i\la^{-1}\overline{v_x }+i\la^{-(\ell+1)}\overline{f^1_x } \ \ \text{and}\ \ \overline{y_x }=i\la^{-1}\overline{z_x }+i\la^{-(\ell+1)}\overline{f^3_x }.
	\end{equation}Inserting \eqref{p2-4.52} in \eqref{p2-4.65}, then using the fact that $v$, $z$ are uniformly bounded in $L^2 (0,L)$ and $\|f^1_x\|_{L^2 (0,L)}=o(1)$, $\|f^3_x\|_{L^2 (0,L)}=o(1)$, we obtain 
	\begin{equation}\label{p2-4.53}
	\mathcal{I}= \Re\left\{2c_0 i\la^{-1} \int_{\beta -3\varepsilon}^{\gamma } (x-L)v\overline{z_x }dx\right\}-\Re\left\{2c_0 i\la^{-1} \int_{\beta -3\varepsilon}^{\gamma } (x-L)z\overline{v_x }dx\right\}+o(|\la|^{-(\ell+1)}).
	\end{equation}Using integration by parts to  the second term in \eqref{p2-4.53}, we obtain 
	\begin{equation}\label{p2-4.54}
	\mathcal{I}=\Re\left\{2c_0 i\la^{-1} \int_{\beta -3\varepsilon}^{\gamma } z\overline{v}dx\right\}-\Re\left\{2c_0i\la^{-1}\left[(x-L)z\overline{v}\right]_{\beta -3\varepsilon}^{\gamma} \right\}+o(|\la|^{-(\ell+1)}).
	\end{equation}
	From Lemma \ref{p2-3rdlemma}, we deduce that 
	\begin{equation}\label{p2-4.56}
	|v(\gamma )|=O(\sqrt{|\la|}), \ \ 	|v(\beta -3\varepsilon )|=O(\sqrt{|\la|}),\ \ 	|z(\gamma )|=O(1) \ \ \text{and}\ \ |z(\beta-3\varepsilon )|=O(1).
	\end{equation}
	Using Cauchy-Schwarz inequality, \eqref{p2-4.56} and the fact that $v, z$ are uniformly bounded in $L^2 (0,L)$, we obtain
\begin{equation*}
\Re\left\{2c_0 i\la^{-1} \int_{\beta -3\varepsilon}^{\gamma } z\overline{v}dx\right\}=O\left(\abs{\la}^{-1}\right)=o(1)\quad \text{and}\quad -\Re\left\{2c_0i\la^{-1}\left[(x-L)z\overline{v}\right]_{\beta -3\varepsilon}^{\gamma} \right\}=O\left(|\la|^{-\frac{1}{2}}\right)=o(1).
\end{equation*}
Inserting the above estimations in \eqref{p2-4.54}, we get
\begin{equation*}
\mathcal{I}=o(1). 
\end{equation*}
Finally, from the above estimation and \eqref{p2-4.64}, we obtain the desired estimation \eqref{p2-resulycutoff4}. The proof is thus complete. 
\end{proof}
\\\linebreak
\textbf{Proof of Theorem \ref{exp-a=1}}. The proof of Theorem \ref{exp-a=1} is divided into three steps.\\
\textbf{Step 1.} Under the hypotheses \eqref{paper2-H}, by taking  $a= 1$ and $\ell=0$ in  Lemmas \ref{p2-1stlemps}-\ref{estimation-z}, we obtain 
\begin{equation}\label{1exp-a=1}
\left\{\begin{array}{l}
\displaystyle{\int_0^{\beta}\int_0^{\infty}g(s)\abs{\omega_x (\cdot,s)}^2dsdx=o(1),\ \int_0^{\beta}\abs{u_x}^2dx=o(1),\ \int_{\varepsilon}^{\beta-\varepsilon}\abs{v}^2dx=o(1),}\vspace{0.25cm}\\
\displaystyle{\int_{\alpha+\varepsilon}^{\beta-2\varepsilon}\abs{y_x}^2dx=o(1)\quad\text{and}\quad \int_{\alpha+2\varepsilon}^{\beta-3\varepsilon}\abs{z}^2dx=o(1)}.
\end{array}
\right.
\end{equation}
\textbf{Step 2.} Using the fact that $a=1$ and \eqref{1exp-a=1} in Lemma \ref{p2-5thlemma}, we obtain 
\begin{equation}\label{2exp-a=1}
\left\{\begin{array}{l}
\displaystyle
\int_0^{\varepsilon}|v|^2dx=o(1),\ \int_{\beta-\varepsilon}^{L}|v|^2dx=o(1),\ \int_{\beta}^L\abs{u_x}^2dx=o(1),\ \int_0^{\alpha+\varepsilon}\abs{y_x}^2dx=o(1),\vspace{0.25cm}\\
\displaystyle 
\int_{\beta-2\varepsilon}^L\abs{y_x}^2dx=o(1),\ \int_0^{\alpha+2\varepsilon}\abs{z}^2dx=o(1)\quad \text{and}\quad \int_{\beta-3\varepsilon}^L\abs{z}^2dx=o(1).
\end{array}
\right.
\end{equation}
\textbf{Step 3.} According to \textbf{Step 1} and \textbf{Step 2}, we obtain $\|U\|_{\mathcal{H}}=o(1)$, which contradicts \eqref{H1-cond}. Therefore, \eqref{H1-cond} holds, and so by  Theorem \ref{Caract}, we deduce that System  \eqref{p2-sysorg0}-\eqref{p2-initialcon} is exponentially stable.\xqed{$\square$}
\\\linebreak
\textbf{Proof of Theorem \ref{pol-an1}}. The proof of Theorem \ref{pol-an1} is divided into three steps.\\ 
\textbf{Step 1.} Under the hypotheses  \eqref{paper2-H} and $a\neq1$, using the fact that $y_x$ is uniformly bounded in $L^2(0,L)$ and \eqref{p2-4.16} in estimation \eqref{estimation-yx1}, we get 
\begin{equation*}
\int_{\alpha+\varepsilon}^{\beta-2\varepsilon}\abs{y_x}^2dx=o(\abs{\la}^{-\frac{\ell}{2}+1})\quad \text{and}\quad \int_{\alpha+2\varepsilon}^{\beta-3\varepsilon}\abs{z}^2dx=o(\abs{\la}^{-\frac{\ell}{2}+1}).
\end{equation*}
Taking $\ell=2$ in the above estimations,  we obtain 
\begin{equation}\label{proof-Theorem-5.2}
\int_{\alpha+\varepsilon}^{\beta-2\varepsilon}\abs{y_x}^2dx=o(1)\quad \text{and}\quad \int_{\alpha+2\varepsilon}^{\beta-3\varepsilon}\abs{z}^2dx=o(1).
\end{equation}
Taking $\ell=2$ in Lemmas   \ref{p2-1stlemps}, \ref{estimation-v}, we obtain 
\begin{equation}\label{1proof-Theorem-5.2}
\int_0^{\beta}\int_0^{\infty}g(s)\abs{\omega_x (\cdot,s)}^2dsdx=o(\la^{-2}),\ \int_0^{\beta}\abs{u_x}^2dx=o(\la^{-2})\quad \text{and}\quad \int_{\varepsilon}^{\beta-\varepsilon}\abs{v}^2dx=o(\abs{\la}^{-1}).
\end{equation}
\textbf{Step 2.} Using the fact that $a\neq 1$, $y_x$ is uniformly bounded in $L^2(0,L)$ and \eqref{1proof-Theorem-5.2} in Lemma \ref{p2-5thlemma}, we obtain 
\begin{equation}\label{1p2-resulycutoff3}
\int_{0}^{\alpha +2\varepsilon}\left(|v|^2+|y_x|^2+|z|^2\right) dx=o(1),
\end{equation}
\begin{equation}\label{1p2-resulycutoff4}
a\int_{\beta}^{L }|u_x|^2 dx +\int_{\beta-3\varepsilon}^{L}\left( |v|^2 +|y_x|^2+|z|^2 \right)dx=o(1).
\end{equation}
Using \eqref{proof-Theorem-5.2} and \eqref{1proof-Theorem-5.2} in \eqref{1p2-resulycutoff3} and \eqref{1p2-resulycutoff4}, we obtain 
\begin{equation}\label{2proof-Theorem-5.2}
\left\{\begin{array}{l}
\displaystyle
\int_0^{\varepsilon}|v|^2dx=o(1),\ \int_{\beta-\varepsilon}^{L}|v|^2dx=o(1),\ \int_{\beta}^L\abs{u_x}^2dx=o(1),\ \int_0^{\alpha+\varepsilon}\abs{y_x}^2dx=o(1),\\[0.1in]
\displaystyle 
\int_{\beta-2\varepsilon}^L\abs{y_x}^2dx=o(1),\ \int_0^{\alpha+2\varepsilon}\abs{z}^2dx=o(1)\quad \text{and}\quad \int_{\beta-3\varepsilon}^L\abs{z}^2dx=o(1).
\end{array}
\right.
\end{equation}
\textbf{Step 3.} According to \textbf{Step 1} and \textbf{Step 2}, we obtain $\|U\|_{\mathcal{H}}=o(1)$,  which contradicts \eqref{H1-cond}. This implies that 
$$
\sup_{\la\in \R}\|(i\la I-\AA)^{-1}\|_{\mathcal{\HH}}=O\left(\la^2\right).
$$ 
Finally, according to Theorem \ref{bt}, we obtain the desired result.
The proof is thus complete.\xqed{$\square$}

\section{Lack of exponential stability with global past history damping in case of different speed propagation waves ( $a\neq 1$) }\label{p2-lackexpostability}\noindent This Section is independent from the previous ones, here we prove the lack of exponential stability with global past history damping and global coupling. For this aim, we consider the following system:

\begin{equation}\label{paper2-sysorigindep} \left\{	\begin{array}{llll}\vspace{0.15cm}
\displaystyle u_{tt}-au_{xx} +\int_{0}^{\infty}g(s)u_{xx} (x,t-s)ds+y_t =0,& (x,s,t)\in (0,L)\times (0,\infty)\times (0,\infty) ,&\\ \vspace{0.15cm}
y_{tt}-y_{xx}-u_t =0,  &(x,t)\in (0,L)\times (0,\infty) ,&\\\vspace{0.15cm}
u(0,t)=u(L,t)=y(0,t)=y(L,t)=0,& t>0 ,& \\\vspace{0.15cm}
(u(x,-s),u_t (x,0))=(u_0 (x,s),u_1 (x)), &(x,s)\in (0,L)\times(0,\infty),&\\\vspace{0.15cm}	(y(x,0),y_t (x,0))=(y_0 (x),y_1 (x)), &x\in (0,L),
\end{array}\right.
\end{equation} 
\noindent and the general integral term represent a history term with the relaxation function $g$ that is supposed to satisfy the following hypotheses
\begin{equation}\tag{${\rm H_G }$}\label{paper2-HG}
\left\{ \begin{array}{lll}
g \in L^1 ([0,\infty))\cap C^1 ([0,\infty)) \ \text{be a strictly positive function such that }\vspace{0.15cm} \\\displaystyle 
\ \  g(0):=g_0 >0, \ \
\int_{0}^{\infty}g(s)ds:=\widetilde{g},\ \  
\widetilde{a}:=a-\widetilde{g}>0, \ \ \text{and}\ \ 
g^{\prime}(s)\leq -mg(s), \ \ \text{for some $m>0$}.
\end{array}\right.
\end{equation}
Now, by using the change of variable \eqref{p2-changeofvar}, then system \eqref{paper2-sysorigindep} becomes 
\begin{eqnarray}	u_{tt}-\widetilde{a}u_{xx} -\int_{0}^{\infty}g(s)\omega_{xx} (\cdot,s,t)   +y_t =0,& (x,s,t)\in (0,L)\times (0,\infty)\times (0,\infty), &\label{p2-sysorg0indep}\\ \vspace{0.15cm}
y_{tt}-y_{xx}-u_t =0,  &(x,t)\in (0,L)\times (0,\infty) ,&\label{p2-sysorg1indep}\\\vspace{0.15cm}
\omega_{t}(\cdot,s,t)+\omega_{s}(\cdot,s,t)-u_t =0,  &(x,s,t)\in (0,L)\times (0,\infty)\times (0,\infty), &\label{p2-sysorg2indep}\vspace{0.15cm}
\end{eqnarray}
with the following boundary  conditions 
\begin{equation}\label{p2-bc3indep}	\left\{\begin{array}{lll}
u(0,t)=u(L,t)=y(0,t)=y(L,t)=0,\   \ t>0, \vspace{0.15cm}\\ \omega(\cdot,0,t)=0,\ \ (x,t)\in (0,L)\times(0,\infty), \vspace{0.15cm}\\ \omega(0,s,t)=\omega(L,s,t)=0 , \ \ (s,t)\in (0,\infty)\times(0,\infty),
\end{array}\right. 
\end{equation}
and the following initial conditions
\begin{equation}\label{p2-initialconindep}
\left\{\begin{array}{llll}
u(\cdot,-s)=u_0 (\cdot,s),\qquad u_t (\cdot,0)=u_1 (\cdot), &(x,s)\in (0,L)\times(0,\infty),& \vspace{0.15cm}\\
y(\cdot,0)=y_0 (\cdot),\qquad y_t (\cdot,0)=y_1 (\cdot), &x\in (0,L),&\vspace{0.15cm} \\
\omega(\cdot,s,0)=u_0 (\cdot,0)-u_0 (\cdot,s), &(x,s)\in (0,L)\times(0,\infty).&
\end{array}\right.\end{equation} 
The energy of  system \eqref{p2-sysorg0indep}-\eqref{p2-initialconindep} is given by 
\begin{equation}
E_G (t)=\frac{1}{2}\int_{0}^{L}\left(|u_t |^2 +\widetilde{a}|u_x |^2 +|y_t |^2 +|y_x |^2 \right)dx+\frac{1 }{2}\intdx \int_{0}^{\infty}g(s)|\omega_x (\cdot,s,t) |^2 dsdx.
\end{equation}
Under the hypotheses \eqref{paper2-HG} and by letting $U=(u,v,y,z,\omega)$  be a regular solution of  system \eqref{p2-sysorg0indep}-\eqref{p2-initialconindep}, then we get with the help of \eqref{p2-bc3indep} that 
$$
\frac{d}{dt}E_G (t)=\frac{1}{2}\intdx\intdm g^{\prime}(s)|\omega_x (\cdot,s,t)|^2dsdx\leq 0,
$$
which implies that the system \eqref{p2-sysorg0indep}-\eqref{p2-initialconindep} is dissipative in the sense that its energy is non-increasing with respect to time. Now, we define the following Hilbert space $\HH_G$ by

$$
	\HH_G=\left(H^1_0 (0,L)\times L^2 (0,L)\right)^2 \times L^2_g ((0,\infty);H^1_0 (0,L)),
	$$		and it  is equipped with the following inner product \begin{equation*}\left(U,U^1 \right)_{\HH_G}=\intdx \left(\widetilde{a}u_x \overline{u_{x}^1 }+v\overline{v^1 }+y_x \overline{y_{x}^1 }+z\overline{z^1 }\right)dx +  \intdx\int_{0}^{\infty}g(s)\omega_x (\cdot,s) \overline{\omega_{x}^1 } (\cdot,s) ds dx,  \end{equation*}where  $U=(u,v,y,z,\omega(\cdot,s))^{\top}\in \HH_G$ and $U^1 =(u^1 ,v^1 ,y^1 ,z^1 ,\omega^1 (\cdot,s))^{\top}\in\HH_G$. We  define the linear unbounded  operator $\AA_G:D(\AA_G)\subset \HH_G\longmapsto \HH_G$  by:
	\begin{equation*}
	D(\AA_G)=\left\{\begin{array}{cc}\vspace{0.25cm}
	U=(u,v,y,z,\omega(\cdot,s))^{\top}\in \HH_G \,\,|\,\, y\in H^2 (0,L)\cap H^{1}_0 (0,L), \ \ v,z\in H^{1}_{0}(0,L) \\\vspace{0.25cm}
	\displaystyle  \left( \widetilde{a}u_x +\int_{0}^{\infty}g(s)\omega_x (\cdot,s)ds\right)_x  \in L^2 (0,L),\quad \omega_s (\cdot,s)\in L^2_g ((0,\infty);H^1_0 (0,L)) ,\quad \omega(\cdot,0)=0.
	\end{array}\right\} 
	\end{equation*}and 
	\begin{equation*}
	\AA_G \begin{pmatrix}
	u\\v\\y\\z\\ \omega(\cdot,s)
	\end{pmatrix}=
	\begin{pmatrix} 
	v\\\displaystyle \left( \widetilde{a}u_x +\int_{0}^{\infty}g(s)\omega_x (\cdot,s)ds\right)_x  -z\\z\\y_{xx} +v\\-\omega_s (\cdot,s) +v 
	\end{pmatrix}.
	\end{equation*} 
	Now, if $U=(u, u_t ,y, y_t ,\omega(\cdot,s))^{\top}$, then  system \eqref{p2-sysorg0indep}-\eqref{p2-initialconindep} can be written as the following first order evolution equation 
	\begin{equation}\label{p2-firstevoindep}
	U_t =\AA_G U , \quad U(0)=U_0,
	\end{equation}where  $U_0 =(u_0 (\cdot,0) ,u_1 ,y_0 ,y_1 ,\omega_0 (\cdot,s) )^{\top}\in \HH_G$.
\begin{theoreme}\label{p2-theoremexpostab}{\rm
		Under the hypotheses \eqref{paper2-HG}. If $a\neq 1$, then  for any $0<\epsilon<2$, we can not expect the energy decay rate $t^{-\frac{2}{2-\epsilon}}$ for every $U_0\in D(\AA_G)$. }
\end{theoreme}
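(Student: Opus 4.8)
The plan is to disprove the rate $t^{-2/(2-\epsilon)}$ by a frequency--domain (quasi--mode) argument. Since $i\R\subset\rho(\AA_G)$ (obtained exactly as in Section~\ref{p2-sec3}, the globality of the damping and of the coupling only simplifying the proof), Theorem~\ref{bt} shows that the decay rate $t^{-2/(2-\epsilon)}$ is equivalent to $\|(i\la I-\AA_G)^{-1}\|_{\mathcal L(\HH_G)}=O(|\la|^{2-\epsilon})$ as $|\la|\to\infty$. So it suffices to construct $\la_n\in\R$ with $\la_n\to+\infty$ and $U_n\in D(\AA_G)$ such that $\|U_n\|_{\HH_G}$ stays bounded from below while $\|(i\la_n I-\AA_G)U_n\|_{\HH_G}=O(\la_n^{-2})$: this forces $\|(i\la_n I-\AA_G)^{-1}\|_{\mathcal L(\HH_G)}\gtrsim\la_n^{2}$, which is not $O(|\la|^{2-\epsilon})$ for any $\epsilon\in(0,2)$, so no such decay rate can hold.

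I would look for $U_n$ on the Fourier mode $\sin(k_nx)$, $k_n=\tfrac{n\pi}{L}$, of the form
\[
U_n=\bigl(a_n\sin(k_nx),\,i\la_n a_n\sin(k_nx),\,c_n\sin(k_nx),\,i\la_n c_n\sin(k_nx),\,a_n(1-e^{-i\la_n s})\sin(k_nx)\bigr)^{\top}.
\]
The last component is exactly the solution of $i\la_n\omega_n+\partial_s\omega_n=i\la_n u_n$ with $\omega_n(\cdot,0)=0$, so the first, third and fifth components of $(i\la_n I-\AA_G)U_n$ vanish identically, and $U_n\in D(\AA_G)$ by a routine check. Using $\int_0^\infty g(s)\,\omega_n(\cdot,s)\,ds=a_n\bigl(\widetilde g-\int_0^\infty g(s)e^{-i\la_n s}ds\bigr)\sin(k_nx)$ and $\widetilde a+\widetilde g=a$, the second component reduces to $\bigl(p_na_n+i\la_nc_n\bigr)\sin(k_nx)$ with $p_n:=-\la_n^2+ak_n^2-k_n^2\int_0^\infty g(s)e^{-i\la_n s}ds$, and the fourth reduces to $\bigl((k_n^2-\la_n^2)c_n-i\la_na_n\bigr)\sin(k_nx)$. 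I then kill the second component by taking $a_n:=-i\la_nc_n/p_n$ (legitimate for $n$ large, since $p_n=(a-1)k_n^2+O(k_n)\neq0$), and normalise with $c_n:=k_n^{-1}$.

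The crucial step is the choice of $\la_n$: not the naive $\la_n=k_n$, but $\la_n:=\sqrt{\,k_n^2-\tfrac1{a-1}\,}$, which is real for $n$ large, satisfies $\la_n=k_n+O(k_n^{-1})$, and gives $k_n^2-\la_n^2=\tfrac1{a-1}$ \emph{exactly}. With this, an elementary simplification turns the fourth component into $\dfrac{c_n}{p_n}\Bigl(\dfrac{a}{(a-1)^2}-\dfrac{k_n^2}{a-1}\int_0^\infty g(s)e^{-i\la_n s}ds\Bigr)\sin(k_nx)$. One integration by parts gives $\int_0^\infty g(s)e^{-i\la s}ds=\tfrac{g_0}{i\la}+\tfrac1{i\la}\int_0^\infty g'(s)e^{-i\la s}ds$, and since $g'\le-mg<0$ forces $g'\in L^1$ with $\|g'\|_{L^1}=g_0$, the Riemann--Lebesgue lemma yields $\bigl|\int_0^\infty g(s)e^{-i\la_n s}ds\bigr|\le 2g_0/\la_n$; combined with $|p_n|\gtrsim k_n^2$ and $|c_n|=k_n^{-1}$ this makes the fourth component $O(k_n^{-2})=O(\la_n^{-2})$ in $L^2(0,L)$. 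Meanwhile $|a_n|\asymp k_n^{-2}$, so the $u$, $v$ and $\omega$ parts of $\|U_n\|_{\HH_G}^2$ are $O(k_n^{-2})$ while the $y,z$ parts equal $\tfrac L2(k_n^2+\la_n^2)|c_n|^2=L+o(1)$; hence $\|U_n\|_{\HH_G}\to\sqrt L>0$ and, the fourth component being the only nonzero one, $\|(i\la_n I-\AA_G)U_n\|_{\HH_G}=O(\la_n^{-2})$, which closes the argument.

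The main obstacle -- and the reason $\la_n=k_n$ is not good enough -- is that the coupling $c(\cdot)u_t$ feeds into the $y$--equation a feedback term $i\la_na_n$ of size $\asymp1$, not $o(1)$; it can be cancelled only by letting the $y$--defect $k_n^2-\la_n^2$ converge to the \emph{explicit finite constant} $\tfrac1{a-1}$, which is available precisely because $a\neq1$ (equal speeds would send it to $\infty$ and break the construction). Once that cancellation is arranged, the residual mismatch is driven purely by the memory symbol $\int_0^\infty g(s)e^{-i\la_n s}ds=O(\la_n^{-1})$, producing the $O(\la_n^{-2})$ defect and hence the $\la_n^2$ growth of the resolvent; the only genuinely technical point is to keep careful track of the $O(k_n)$ remainders in $p_n$ and in the fourth component so that the error really is $O(\la_n^{-2})$.
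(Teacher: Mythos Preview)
Your proof is correct and follows essentially the same quasi--mode construction as the paper: separated Fourier modes $\sin(k_nx)$, history profile $\omega_n=u_n(1-e^{-i\la_ns})$, the same two scalar equations for the amplitudes, and the same mechanism ($\la_n g_{\la_n}\to -ig_0$ via Riemann--Lebesgue) that ties the resolvent growth to $\la_n^2$. The only differences are cosmetic: you normalise so that $\|U_n\|_{\HH_G}\to\sqrt{L}$ and the residual is $O(\la_n^{-2})$, whereas the paper takes $F_n=(0,0,0,\sin(k_nx),0)$ of unit size and shows $\|U_n\|_{\HH_G}\gtrsim\la_n^2$; and you set $\la_n=\sqrt{k_n^2-\tfrac1{a-1}}$ exactly, while the paper uses its first--order expansion $\la_n=k_n-\tfrac1{2k_n(a-1)}$. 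Your choice makes the algebra slightly cleaner (the defect $k_n^2-\la_n^2$ is the exact constant $\tfrac1{a-1}$), but the idea, the cancellation forced by $a\neq1$, and the conclusion are identical.
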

\begin{proof}
	Following Huang \cite{Huang01} and Pruss \cite{pruss01} (see also Theorem \ref{Caract}), it is sufficient  to show the existence of sequences $\left(\la_n\right)_n\subset \R^{\ast}_+$ with $\la_n\to\infty$, $(U_n)_n\subset D(\AA_G)$ and $\left(F_n\right)_n\subset \HH_G$ such that $\left(i\la_nI-\AA\right)U_n=F_n$ is bounded in $\HH_G$ and 
	\begin{equation}\label{optimal1}
	\lim_{n\to\infty}\la_n^{-2+\epsilon}\|U_n\|_{\HH_G}=\infty. 
	\end{equation}
	For this aim, take 
	$$
	F_n=\left(0,0,0,\sin\left(\frac{n\pi x}{L}\right),0\right)\quad \text{and}\quad U_n=(u_n,i\la_nu_n,y_n,i\la_ny_n,\omega_n)
	$$
	such that 
	\begin{equation}\label{optimal2}
	\left\{\begin{array}{l}
	\displaystyle{\la_n=\frac{n\pi}{L}-\frac{L}{2n\pi (a-1)}} \ \ \text{such that}\ \ n^2>\displaystyle{\frac{L^2}{2\pi^2(a-1)}},\\[0.2in] \displaystyle{u_n(x)=A_n\sin\left(\frac{n\pi x}{L}\right)},\ \displaystyle{y_n(x)=B_n\sin\left(\frac{n\pi x}{L}\right)},\  \displaystyle{\omega_n(x,s)=A_n(1-e^{-i\la_ns})\sin\left(\frac{n\pi x}{L}\right)},
	\end{array}
	\right.
	\end{equation}
	where $A_n$ and $B_n$ are complex numbers depending on $n$ and  determined explicitly in the sequel. Note that this choice is  compatible with the boundary conditions. So, its is clear that $\la_n>0$, $\displaystyle{\lim_{n\to \infty}\la_n=\infty}$, $F_n$ is uniformly bounded in $\HH$ and $U_n\in D(\AA_G )$. Next, detailing $i\la_nU_n-\AA U_n=F_n$, we get 
	\begin{equation}\label{optimal3}
	\left\{\begin{array}{l}
	\displaystyle{iA_nL^2\la+\left(\la^2L^2-\pi^2n^2\right)B_n=-L^2},\\[0.1in]
	\displaystyle{\left(n^2\pi^2(a-g_{\la_n})-\la^2L^2\right)A_n+iL^2\la B_n=0},
	\end{array}
	\right.
	\end{equation}
	where $\displaystyle{g_{\la_n}=\int_0^{\infty}g(s)e^{-i\la_ns}ds.}$
	From the first equation of \eqref{optimal3}, we get 
	\begin{equation}\label{optimal4}
	A_n=\frac{i}{\la}+\frac{i(L^2\la^2-\pi^2n^2)B_n}{L^2\la}.
	\end{equation}
	Inserting Equation \eqref{optimal4} in the second equation of \eqref{optimal3}, we get 
	\begin{equation*}
	B_n=\frac{\left(\la^2L^2-(a-g_{\la_n})n^2\pi^2\right)L^2}{-n^4(a-g_{\la_n})\pi^4+L^2\pi^2n^2\la^2(a+1-g_{\la_n})+L^4(\la^2-\la^4)}.
	\end{equation*}
	Consequently, the solution of \eqref{optimal3} is given by 
	\begin{equation}\label{optimal5}
	A_n=\frac{i}{\la}+\frac{i(L^2\la^2-\pi^2n^2)B_n}{L^2\la}\quad \text{and}\quad B_n=B_{1,n}\left(1+\frac{B_{2,n}}{\la_n g_{\la_n}+B_{3,n}}\right),
	\end{equation}
	where
	\begin{equation*}
	\left\{\begin{array}{l}
	\displaystyle{B_{1,n}=\frac{L^2}{\left(n^2\pi^2-L^2\la^2\right)}},\quad \displaystyle{B_{2,n}=\frac{L^4\la^3}{n^2\pi^2\left(\la^2L^2-n^2\pi^2\right)}}\vspace{0.25cm}\\
	\displaystyle{B_{3,n}=\frac{\left(-\pi^4an^4+L^2n^2\la^2(a+1)\pi^2+L^4(\la^2-\la^4)\right)\la}{n^2\pi^2\left(n^2\pi^2-L^2\la^2\right)}}.
	\end{array}
	\right.
	\end{equation*}
	Now, inserting $\la_n$ given in Equation \eqref{optimal2} in the above equation, then using asymptotic expansion, we get 
	\begin{equation}\label{optimal6}
	B_{1,n}=a-1+O(n^{-2}),\quad B_{2,n}=\frac{1-a}{L}\pi n+O(n^{-1}),\quad B_{3,n}=O(n^{-1}).
	\end{equation}
	On the other hand, using hypotheses \eqref{paper2-HG} and integration by parts, we obtain
	\begin{equation*}
	\displaystyle\la_ng_{\la_n}=\displaystyle-\,ig_0 -i\int_0^{\infty}g'(s)e^{-i\la_ns}ds.
	\end{equation*}
	It is clear from  Riemann-Lebesgue Lemma that the second term in the  above equation goes to zero as $\la_n\to\infty$. 
	Thus, we obtain 
	\begin{equation}\label{optimal7}
	\la_ng_{\la_n}=-i g_0+o(1). 
	\end{equation}
	Substituting \eqref{optimal6} and \eqref{optimal7} in \eqref{optimal5}, we get 
	\begin{equation*}
	A_n=O(1)\quad \text{and}\quad B_n=\left(-\frac{i(a-1)^2}{g_0L}+o(1)\right)n\pi.
	\end{equation*}
	Therefore, from the above equation and \eqref{optimal7}, we get 
	$$
	z_n(x)=i\la_n B_n\sin\left(\frac{n\pi x}{L}\right)=\left(\frac{(a-1)^2}{g_0L^2}+o(1)\right)n^2\pi^2\sin\left(\frac{n\pi x}{L}\right).
	$$
	Consequently, 
	$$
	\left(\int_0^L\abs{z_n}^2dx\right)^{\frac{1}{2}}\sim \sqrt{\frac{L}{2}}\left(\frac{(a-1)^2}{g_0L^2}+o(1)\right)n^2\pi^2.
	$$
	Since 
	$$
	\|U_n\|_{\HH}\geq \left(\int_0^L\abs{z_n}^2dx\right)^{\frac{1}{2}}\sim  \sqrt{\frac{L}{2}}\left(\frac{(a-1)^2}{g_0L^2}+o(1)\right)n^2\pi^2\sim \la_n^2,
	$$
	then for all $0<\epsilon<2$, we have 
	$$
	\la_n^{-2+\epsilon}\|U_n\|_{\HH_1}\sim \la_n^{\epsilon}\rightarrow \infty \quad \text{as}\quad n\to\infty,
	$$
	hence, we get \eqref{optimal1}. Consequently, we cannot expect the energy decay rate $t^{-\frac{2}{2-\epsilon}}$. The proof is thus complete.
\end{proof}
\begin{rk}
	{\rm In  \cite{Almeida2011} and \cite{cordeiro}, the authors proved the lack of exponential stability of a coupled wave equations system with past history damping by taking a particular relaxation function  $g(s)=e^{-\mu s}$ such that $s\in \mathbb{R}_+$ and $\mu>1$. \xqed{$\square$}
		
	}
\end{rk}

\section{Conclusion}
We have studied  the stabilization of a locally coupled wave equations with local viscoelastic damping of past history type acting only in one equation via non smooth coefficients. We proved the strong stability of the system by using Arendt-Batty criteria. We established the exponential stability of the solution  if and only if the waves have the same speed propagation (i.e. $a=1$). In the case  $a\neq 1$, we proved that the energy of our system  decays polynomially with the rate $t^{-1}$. Lack of exponential stability result has been proved in case that the speed of waves propagation are different (i.e. $a\neq 1$). According to Theorem \ref{p2-theoremexpostab}, we can conjecture that the energy decay rate $t^{-1}$ is optimal but this question remains open.

\appendix
\section{Some notions and stability theorems}\label{p2-appendix}
\noindent In order to make this paper more self-contained, we recall in this short appendix some notions and stability results used in this work. 
\begin{defi}\label{App-Definition-A.1}{\rm
		Assume that $A$ is the generator of $C_0-$semigroup of contractions $\left(e^{tA}\right)_{t\geq0}$ on a Hilbert space $H$. The $C_0-$semigroup $\left(e^{tA}\right)_{t\geq0}$ is said to be 
		\begin{enumerate}
			\item[$(1)$] Strongly stable if 
			$$
			\lim_{t\to +\infty} \|e^{tA}x_0\|_H=0,\quad \forall\, x_0\in H.
			$$
			\item[$(2)$] Exponentially (or uniformly) stable if there exists two positive constants $M$ and $\varepsilon$ such that 
			$$
			\|e^{tA}x_0\|_{H}\leq Me^{-\varepsilon t}\|x_0\|_{H},\quad \forall\, t>0,\ \forall\, x_0\in H.
			$$
			\item[$(3)$] Polynomially stable if there exists two positive constants $C$ and $\alpha$ such that 
			$$
			\|e^{tA}x_0\|_{H}\leq Ct^{-\alpha}\|A x_0\|_{H},\quad \forall\, t>0,\ \forall\, x_0\in D(A).
			$$
			\xqed{$\square$}
	\end{enumerate}}
\end{defi}
\noindent To show  the strong stability of the $C_0$-semigroup $\left(e^{tA}\right)_{t\geq0}$ we rely on the following result due to Arendt-Batty \cite{Arendt01}. 
\begin{Theorem}\label{App-Theorem-A.2}{\rm
		{Assume that $A$ is the generator of a C$_0-$semigroup of contractions $\left(e^{tA}\right)_{t\geq0}$  on a Hilbert space $H$. If $A$ has no pure imaginary eigenvalues and  $\sigma\left(A\right)\cap i\mathbb{R}$ is countable,
			where $\sigma\left(A\right)$ denotes the spectrum of $A$, then the $C_0$-semigroup $\left(e^{tA}\right)_{t\geq0}$  is strongly stable.}\xqed{$\square$}}
\end{Theorem}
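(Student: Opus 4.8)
The statement to be proved is the Arendt--Batty theorem. The plan is to deduce strong stability from the two spectral hypotheses in three stages: a reduction of the eigenvalue condition to the adjoint, a dynamical reduction exploiting contractivity, and the core spectral rigidity argument.

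First I would convert the hypothesis ``$A$ has no purely imaginary eigenvalue'' into a statement about $A^{*}$, using that we are on a Hilbert space and $(e^{tA})_{t\ge0}$ is contractive (so $A$ and $A^{*}$ are both $m$-dissipative and $(e^{tA})^{*}=e^{tA^{*}}$). If $Ax=i\lambda x$ with $x\neq0$ and $\lambda\in\R$, then $e^{tA}x=e^{i\lambda t}x$ is isometric, and testing against $x$ gives $\langle e^{tA^{*}}x,x\rangle=\langle x,e^{tA}x\rangle=e^{-i\lambda t}\|x\|^{2}$; since $\|e^{tA^{*}}x\|\le\|x\|$, the Cauchy--Schwarz inequality is forced to be an equality, whence $e^{tA^{*}}x=e^{-i\lambda t}x$ and $A^{*}x=-i\lambda x$. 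Running the computation backwards gives the converse, so $\ker(i\lambda I-A)=\ker(-i\lambda I-A^{*})$, and in particular $\sigma_{p}(A^{*})\cap i\R=\emptyset$. Because the orthogonal complement of the range of $i\lambda I-A$ equals $\ker(i\lambda I+A^{*})=\{0\}$, that range is dense, so $i\R$ carries no residual spectrum of $A$ either; thus $\sigma(A)\cap i\R$ consists of continuous spectrum only and stays countable by hypothesis.

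Next comes the dynamical reduction. Fix $x_{0}\in H$; by contractivity $t\mapsto\|e^{tA}x_{0}\|$ is nonincreasing and converges to some $r(x_{0})\ge0$, and $x\mapsto r(x)$ is a seminorm, so $H_{0}:=\{x:r(x)=0\}$ is a closed, $e^{tA}$-invariant subspace, and strong stability is precisely the claim $H_{0}=H$. I would then study the weak limit set of the orbit: since $H$ is reflexive and the orbit is bounded, every sequence $t_{n}\to\infty$ has a weakly convergent subsequence. The structural point I would establish is that each weak limit point lies in the ``reversible'' part of $H$, on which $e^{tA}$ extends to an isometric (unitary) group whose generator has spectrum confined to the countable boundary set $\sigma(A)\cap i\R$ and, by the first step, no eigenvalues there.

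The heart of the matter is the resulting spectral rigidity: a bounded orbit whose reduced generator has countable imaginary spectrum and no imaginary point spectrum must be trivial. Here I would run the Lyubich--Vu argument --- a nonempty closed countable set is not perfect, hence $\sigma(A)\cap i\R$ has an isolated point $i\lambda_{0}$; the associated Riesz spectral projection together with the fact that $i\lambda_{0}$ is not an eigenvalue forces the corresponding spectral component of the limit orbit to vanish, and a transfinite induction along the Cantor--Bendixson derivatives of $\sigma(A)\cap i\R$ peels off every boundary point and annihilates the whole limit part. Equivalently one may invoke a Tauberian theorem of Ingham--Karamata type applied to the Laplace transform $\lambda\mapsto(\lambda I-A)^{-1}x_{0}$, whose only boundary singularities form the countable set $\sigma(A)\cap i\R$. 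Concluding that all weak limit points are $0$, together with the monotone convergence of $\|e^{tA}x_{0}\|$, yields $\|e^{tA}x_{0}\|\to0$. The genuinely hard step is exactly this spectral rigidity / Tauberian engine: the hypothesis reduction and the contraction bookkeeping are routine, whereas extracting decay from mere countability of the boundary spectrum, with no spectral gap available, is the substantive content of the theorem.
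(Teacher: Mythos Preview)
The paper does not prove this theorem: it is quoted in the appendix as a classical result of Arendt and Batty \cite{Arendt01} and marked with a $\square$ immediately after the statement, with no argument given. There is therefore no ``paper's own proof'' to compare against; the theorem functions here purely as a black box invoked in Section~\ref{p2-sec3}.

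As to your sketch on its own merits: the outline blends the Lyubich--V\~u isometric-limit approach with the Arendt--Batty Tauberian approach, and the broad strategy is sound, but the central step is stated too loosely to count as a proof. In the Lyubich--V\~u route one passes to the quotient $H/\overline{\{x:\lim_t\|e^{tA}x\|=0\}}$, on which the induced semigroup is isometric and extends to a $C_0$-group with generator $\widetilde{A}$ satisfying $\sigma(\widetilde{A})\subset\sigma(A)\cap i\R$; the key lemma is that an isolated point of the spectrum of the generator of a bounded $C_0$-group is necessarily an eigenvalue. Your appeal to a ``Riesz spectral projection'' at an isolated boundary point glosses over exactly this: the projection exists because the point is isolated in $\sigma(\widetilde{A})$, and its range is nonzero and consists of eigenvectors, contradicting the (transferred) absence of imaginary eigenvalues. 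With that lemma in hand, the Cantor--Bendixson induction you describe is correct and forces $\sigma(\widetilde{A})=\varnothing$, hence the quotient is trivial. The first paragraph (symmetry of imaginary point spectrum between $A$ and $A^{*}$ via contractivity) is correct and standard; the ``weak limit points lie in the reversible part'' formulation in your second paragraph is heuristic and would need to be replaced by the precise quotient construction just described.
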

\noindent Concerning the characterisation of exponential stability of $C_0-$semigroup of contraction $(e^{tA})_{t\geq 0}$ we rely on the following result due to Huang \cite{Huang01} and Pruss \cite{pruss01}.
\begin{Theorem}\label{Caract}
	{\rm
Let $A:\ D(A)\subset H\longrightarrow H$ generates a $C_0-$semigroup of contractions $\left(e^{tA}\right)_{t\geq0}$ on $H$. Assume that $i\R \subset\rho(A)$. Then, the $C_0-$semigroup $\left(e^{tA}\right)_{t\geq0}$ is exponentially stable if and only if 
$$
\limsup_{\la\in \R,\ \abs{\la}\rightarrow +\infty}\|(i\la I-A)^{-1}\|_{\mathcal{L}(H)}<\infty.
$$}
\end{Theorem}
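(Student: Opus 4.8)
The plan is to prove the two implications separately; necessity of the resolvent bound is elementary, while its sufficiency is the substantial half and rests on a Hilbert‑space argument through Plancherel's theorem (the Gearhart--Pr\"uss mechanism). Throughout I use that, $i\R\subset\rho(A)$ being assumed and $s\mapsto(sI-A)^{-1}$ being norm‑continuous on the compact arcs $\{i\la:\abs{\la}\leq R\}$, the $\limsup$ condition in the statement is equivalent to the finiteness of $M_0:=\sup_{\la\in\R}\|(i\la I-A)^{-1}\|_{\mathcal L(H)}$.

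\textbf{Necessity.} If $\|e^{tA}\|_{\mathcal L(H)}\leq Me^{-\varepsilon t}$ for some $M\geq1$, $\varepsilon>0$, then for $\Re s>-\varepsilon$ the integral $\int_0^{\infty}e^{-st}e^{tA}x\,dt$ converges absolutely; for $\Re s$ large it is the usual Laplace representation of $(sI-A)^{-1}x$, and by analytic continuation the identity $(sI-A)^{-1}x=\int_0^{\infty}e^{-st}e^{tA}x\,dt$ holds on all of $\{\Re s>-\varepsilon\}$. Evaluating at $s=i\la$ gives $\|(i\la I-A)^{-1}\|_{\mathcal L(H)}\leq\int_0^{\infty}Me^{-\varepsilon t}\,dt=M/\varepsilon$, uniformly in $\la\in\R$.

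\textbf{Sufficiency.} Assume $M_0<\infty$; I would argue in three steps. \emph{(i) Extend the bound to a strip.} Since the semigroup is contractive, $\{\Re s>0\}\subset\rho(A)$ with $\|(sI-A)^{-1}\|_{\mathcal L(H)}\leq(\Re s)^{-1}$; and for $\abs{\Re s}<M_0^{-1}$ one has the Neumann expansion off the imaginary axis
\[
(sI-A)^{-1}=\bigl(i\,\Im s\,I-A\bigr)^{-1}\sum_{k=0}^{\infty}(\Re s)^{k}\bigl(i\,\Im s\,I-A\bigr)^{-k},
\]
converging in $\mathcal L(H)$ because $\abs{\Re s}\,M_0<1$. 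Hence there are $\delta\in(0,M_0^{-1})$ and $\widetilde M<\infty$ with $\{\Re s>-\delta\}\subset\rho(A)$ and $\sup_{\Re s>-\delta}\|(sI-A)^{-1}\|_{\mathcal L(H)}\leq\widetilde M$. \emph{(ii) Plancherel estimate.} For $\mu\in(0,\delta)$ and $x\in D(A)$ put $u(t)=e^{-\mu t}e^{tA}x$ for $t\geq0$ and $u=0$ for $t<0$; then $u\in L^2(\R;H)$ and $\widehat u(\la)=\bigl((\mu+i\la)I-A\bigr)^{-1}x$, so Plancherel gives
\[
\int_0^{\infty}e^{-2\mu t}\|e^{tA}x\|_H^2\,dt=\frac{1}{2\pi}\int_{\R}\bigl\|\bigl((\mu+i\la)I-A\bigr)^{-1}x\bigr\|_H^2\,d\la .
\]
Applying this with $Ax$ in place of $x$ and using $\bigl((\mu+i\la)I-A\bigr)^{-1}Ax=-x+(\mu+i\la)\bigl((\mu+i\la)I-A\bigr)^{-1}x$ together with the bound from (i), one obtains a constant $C=C(\widetilde M)$ \emph{independent of} $\mu$ with $\int_0^{\infty}e^{-2\mu t}\|e^{tA}x\|_H^2\,dt\leq C\|x\|_H^2$, first for $x\in D(A)$ and then, by density and $\|e^{tA}\|\leq1$, for all $x\in H$; letting $\mu\downarrow0$ yields $\int_0^{\infty}\|e^{tA}x\|_H^2\,dt\leq C\|x\|_H^2$ for every $x\in H$. \emph{(iii) Datko's lemma.} From this, $t\,\|e^{tA}x\|_H^2=\int_0^{t}\|e^{(t-\sigma)A}e^{\sigma A}x\|_H^2\,d\sigma\leq\int_0^{t}\|e^{\sigma A}x\|_H^2\,d\sigma\leq C\|x\|_H^2$, so $\|e^{tA}\|_{\mathcal L(H)}\lesssim t^{-1/2}$; the semigroup property upgrades this algebraic decay to $\|e^{tA}\|_{\mathcal L(H)}\leq\widetilde M'e^{-\varepsilon' t}$, i.e. exponential stability.

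\textbf{Main obstacle.} The heart of the argument is step (ii): the ``Fourier transform on the imaginary axis'' must be made legitimate, since a priori $t\mapsto e^{tA}x$ is merely bounded rather than $L^2$, so one must work with the regularized orbits $u_\mu$ and show the resulting estimate is uniform as $\mu\downarrow0$. This is the single place where the Hilbert‑space structure is genuinely used---the characterization is false on general Banach spaces---and it is exactly the content of the Gearhart--Pr\"uss theorem. Steps (i) and (iii) are soft: a Neumann‑series perturbation and the classical Datko--Pazy lemma, respectively.
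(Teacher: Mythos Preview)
The paper does not prove this theorem; it is quoted in the appendix as a classical result of Huang and Pr\"uss, with references only. So there is no ``paper's proof'' to compare against. Your overall outline is exactly the standard Gearhart--Pr\"uss mechanism, and the necessity direction, step~(i), and step~(iii) are correct as written.

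The genuine gap is in step~(ii), precisely at the point you yourself flag as the main obstacle. The identity $R(\mu+i\la)Ax=-x+(\mu+i\la)R(\mu+i\la)x$ does \emph{not} deliver a $\mu$-uniform bound on $\int_{\R}\|R(\mu+i\la)x\|^2\,d\la$: solving for $R(\mu+i\la)x$ produces the term $x/(\mu+i\la)$, whose $L^2(\R_\la)$-norm is $(\pi/\mu)^{1/2}\|x\|$ and blows up as $\mu\downarrow 0$; the remaining term is no better. The standard repair is not this identity but the resolvent equation comparing two vertical lines: for a fixed $\nu>0$,
\[
R(\mu+i\la)=\bigl[I+(\nu-\mu)R(\mu+i\la)\bigr]R(\nu+i\la),
\]
so that, using the strip bound from your step~(i), $\|R(\mu+i\la)x\|\leq(1+\abs{\nu-\mu}\,\widetilde M)\,\|R(\nu+i\la)x\|$ uniformly for $0<\mu\leq\nu$. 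Integrating in $\la$ and using Plancherel on the line $\Re s=\nu$ together with contractivity gives
\[
\int_{\R}\|R(\mu+i\la)x\|^2\,d\la\leq(1+\nu\widetilde M)^2\int_{\R}\|R(\nu+i\la)x\|^2\,d\la
\leq(1+\nu\widetilde M)^2\,\frac{\pi}{\nu}\,\|x\|^2,
\]
which is the required $\mu$-uniform bound, valid for all $x\in H$ without passing through $D(A)$. With this correction your steps~(ii)--(iii) go through and the proof is complete.
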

\noindent Also, concerning the characterization of polynomial stability stability of a $C_0-$semigroup of contraction $\left(e^{tA}\right)_{t\geq 0}$ we rely on the following result due to Borichev and Tomilov \cite{Borichev01} (see also \cite{Batty01} and \cite{RaoLiu01})
\begin{Theorem}\label{bt}
	{\rm
	Assume that $A$ is the generator of a strongly continuous semigroup of contractions $\left(e^{tA}\right)_{t\geq0}$  on $\mathcal{H}$.   If   $ i\mathbb{R}\subset \rho(\mathcal{A})$, then for a fixed $\ell>0$ the following conditions are equivalent
	\begin{equation}\label{h1}
	\sup_{\lambda\in\mathbb{R}}\left\|\left(i\lambda I-\mathcal{A}\right)^{-1}\right\|_{\mathcal{L}\left(\mathcal{H}\right)}=O\left(|\lambda|^\ell\right),
	\end{equation}
	\begin{equation}\label{h2}
	\|e^{t\mathcal{A}}U_{0}\|^2_{\HH} \leq \frac{C}{t^{\frac{2}{\ell}}}\|U_0\|^2_{D(\AA)},\hspace{0.1cm}\forall t>0,\hspace{0.1cm} U_0\in D(\AA),\hspace{0.1cm} \text{for some}\hspace{0.1cm} C>0.
	\end{equation}\xqed{$\square$}}
\end{Theorem}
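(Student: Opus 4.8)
This assertion is the Hilbert--space characterisation of polynomial decay due to Borichev and Tomilov, so I would establish the two implications \eqref{h1}$\Rightarrow$\eqref{h2} and \eqref{h2}$\Rightarrow$\eqref{h1} separately. Throughout I would use that the contraction hypothesis makes $(e^{tA})_{t\ge0}$ bounded and that $i\R\subset\rho(A)$ forces $0\in\rho(A)$, so $A^{-1}\in\mathcal{L}(\HH)$ and \eqref{h2} is equivalent to the operator bound $\|e^{tA}A^{-1}\|_{\mathcal{L}(\HH)}\le C(1+t)^{-1/\ell}$. For the easy implication \eqref{h2}$\Rightarrow$\eqref{h1}, I would start from the Laplace representation $R(\mu,A)=\int_0^\infty e^{-\mu t}e^{tA}\,dt$ valid for $\Re\mu>0$, compose it with $A^{-1}$, and let $\Re\mu\downarrow0$; the decay $\|e^{tA}A^{-1}\|=O(t^{-1/\ell})$ renders the boundary integral $\int_0^\infty e^{-i\la t}e^{tA}A^{-1}\,dt$ convergent (iterating with higher powers $A^{-k}$ when $\ell\ge1$, since $\|e^{tA}A^{-k}\|=O(t^{-k/\ell})$ follows from \eqref{h2} and the semigroup moment inequalities). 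Combining this with the resolvent identity $R(i\la,A)A^{-1}=\tfrac{1}{i\la}\bigl(R(i\la,A)+A^{-1}\bigr)$ and splitting the time integral at a threshold of order $|\la|^{-\ell}$ then recovers $\|R(i\la,A)\|=O(|\la|^{\ell})$, which is \eqref{h1}.

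The substantial implication is \eqref{h1}$\Rightarrow$\eqref{h2}. First I would convert the imaginary--axis bound into analyticity and polynomial control in a region to the left of $i\R$: writing $R(\mu,A)=R(is,A)\bigl[I-(is-\mu)R(is,A)\bigr]^{-1}$ with $s=\Im\mu$, the Neumann series converges whenever $|\Re\mu|\,\|R(is,A)\|<1$, so $\rho(A)\supset\Omega_\ell:=\{\mu:\Re\mu\ge-c(1+|\Im\mu|)^{-\ell}\}$ and $\|R(\mu,A)\|\le C(1+|\Im\mu|)^{\ell}$ on $\Omega_\ell$. The engine of the decay estimate is the Plancherel identity available in the Hilbert space $\HH$: for $x\in\HH$ and $\epsilon>0$,
\begin{equation*}
\int_{\R}\|R(\epsilon+is,A)x\|^2\,ds=2\pi\int_0^\infty e^{-2\epsilon t}\|e^{tA}x\|^2\,dt .
\end{equation*}
Applying this to the smoothed data $A^{-\beta}x$ with $\beta$ chosen so that $\ell-\beta<-\tfrac12$, letting $\epsilon\downarrow0$ and using the extension to $\Omega_\ell$ to justify the passage to the boundary, I would obtain the finite--energy bound $\int_0^\infty\|e^{tA}A^{-\beta}x\|^2\,dt\le C\|x\|^2$; since $t\mapsto\|e^{tA}z\|$ is nonincreasing for a contraction semigroup, this upgrades to a pointwise $t^{-1/2}$ decay of $e^{tA}A^{-\beta}$.

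The main obstacle is to pass from this preliminary decay to the \emph{sharp} exponent $1/\ell$ required by \eqref{h2}: a crude interpolation between $\|e^{tA}A^{-\beta}\|=O(t^{-1/2})$ and $\|e^{tA}\|=O(1)$ loses a factor and yields only $t^{-1/(2\beta)}$ instead of $t^{-1/\ell}$. Removing this loss is precisely the content of the Borichev--Tomilov refinement, and I would follow their contour argument: represent $e^{tA}A^{-1}$ by an integral over the boundary of $\Omega_\ell$, on which $|e^{t\mu}|=e^{-ct(1+|\Im\mu|)^{-\ell}}$ while $\|R(\mu,A)A^{-1}\|=O((1+|\Im\mu|)^{\ell-1})$, and control this only conditionally convergent integral by a Hilbert--space $L^2$ estimate rather than by absolute values, splitting the frequency range at $|\Im\mu|\sim t^{1/\ell}$ and optimising. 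The interplay between the shrinking width $(1+|s|)^{-\ell}$ of $\Omega_\ell$ and the polynomial growth $(1+|s|)^{\ell}$ of the resolvent is exactly what produces the rate $t^{-1/\ell}$, equivalently \eqref{h2}; it is here, together with the careful justification of the boundary limit $\epsilon\downarrow0$, that all the difficulty of the theorem is concentrated, and it is the Hilbert--space structure (through Plancherel) that makes the sharp exponent attainable without the logarithmic loss present in the general Banach--space setting.
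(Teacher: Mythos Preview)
The paper does not prove this statement at all: Theorem~\ref{bt} is recorded in the appendix as a known background result, attributed to Borichev and Tomilov \cite{Borichev01} (with further references to \cite{Batty01} and \cite{RaoLiu01}), and is marked with a $\square$ immediately after its statement with no argument given. It is invoked in Section~\ref{Exp-Pol} purely as a black box to convert the resolvent estimate \eqref{H1-cond} with $\ell=2$ into the polynomial decay rate $t^{-1}$ of Theorem~\ref{pol-an1}.

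Your proposal therefore goes well beyond what the paper does: you have sketched an actual proof of the Borichev--Tomilov characterisation, whereas the paper simply quotes it. Your outline is broadly faithful to the original argument --- the Neumann-series extension of the resolvent into a logarithmic/polynomial region left of $i\R$, the Plancherel identity exploiting the Hilbert-space structure, and the contour-shift/frequency-splitting step that removes the logarithmic loss --- and you correctly identify where the difficulty lies and why the sharp exponent $1/\ell$ requires more than naive interpolation. For the purposes of this paper, however, none of this is needed: a one-line citation to \cite{Borichev01} is all that is expected, and that is exactly what the authors provide.
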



\end{document}